\newtheorem{thm}{Theorem}[section]
\newtheorem{cor}[thm]{Corollary}
\newtheorem{lem}[thm]{Lemma}
\newtheorem{prop}[thm]{Proposition}
\theoremstyle{definition}
\newtheorem{defn}[thm]{Definition}
\theoremstyle{remark}
\newtheorem{rem}[thm]{Remark}
\numberwithin{equation}{section}
\newcommand{\Cat}{\mathsf{Cat}}
\newcommand{\FF}{\mathbb{F}} 							
\newcommand{\LL}{\mathbb{L}} 							
\newcommand{\RR}{\mathbb{R}} 							
\newcommand{\NN}{\mathbb{N}} 							
\newcommand{\MM}{\mathbb{M}} 							
\newcommand{\MMSp}{\mathbb{M}\mhyphen\mathsf{spaces}} 		
\newcommand{\op}{\textup{op}} 							
\newcommand{\sSp}{\mathsf{sSpaces}}						
\newcommand{\GammaSp}{\Gamma\mhyphen\mathsf{spaces}}	
\newcommand{\RRSp}{\RR\mhyphen\mathsf{spaces}}			
\newcommand{\dS}{\mathsf{dSets}}							
\newcommand{\dSp}{\mathsf{dSpaces}}						
\newcommand{\Sp}{\mathsf{sSets}}							
\newcommand{\plusarrow}{\xrightarrow{\scriptscriptstyle+}}		
\newcommand{\minusarrow}{\xrightarrow{\scriptscriptstyle-}}		
\newcommand{\lminusarrow}{\xleftarrow{\scriptscriptstyle-}}		
\newcommand{\map}{\textup{map}}							
\newcommand{\Hom}{\textup{Hom}}							
\newcommand{\colim}{\textup{colim}}						
\newcommand{\hocolim}[1]{\underset{#1}{\textup{hocolim}}}						
\newcommand{\sG}{\mathcal{G}} 							
\mathchardef\mhyphen="2D 
\newcommand{\pto}{\nrightarrow}
\newcommand{\pgets}{\nleftarrow}
\begin{document}


\title[Dendroidal spaces, $\Gamma$-spaces and the special BPQ theorem]{Dendroidal spaces, $\Gamma$-spaces and the special Barratt-Priddy-Quillen theorem}
\author{Pedro Boavida de Brito and Ieke Moerdijk}%
\address{Dept. of Mathematics, Instituto Superior Tecnico, Univ. of Lisbon, Av. Rovisco Pais, Lisboa, Portugal}%
\email{pedrobbrito@tecnico.ulisboa.pt}

\address{Department of Mathematics, Utrecht University, PO BOX 80.010,  3508 TA Utrecht, The Netherlands}%
\email{i.moerdijk@uu.nl}

\begin{abstract}
We study the covariant model structure on dendroidal spaces, and establish direct relations to the homotopy theory of algebras over a simplicial operad as well as to the homotopy theory of special $\Gamma$-spaces. As an important tool in the latter comparison, we present a sharpening of the classical Barratt-Priddy-Quillen theorem.
\end{abstract}
\maketitle



\section{Introduction and statement of results}

The purpose of this paper is two-fold. The first is to provide a direct relation between
the theory of $\Gamma$-spaces and that of dendroidal spaces. The second is to 
provide a description, via dendroidal spaces and arguably simpler than the one 
in \cite{Heuts} and \cite{Lurie}, of the relation between algebras over operads and their $\infty$ 
counterparts.

\medskip
The category of $\Gamma$-spaces was introduced in the 1970s by Graeme Segal 
as a model for infinite loop spaces, or equivalently, for connective 
spectra. Later, this relation between connective spectra and 
$\Gamma$-spaces was cast in the more informative form of an equivalence of 
Quillen model categories by Bousfield and Friedlander \cite{Bousfield-Friedlander}. Dendroidal 
spaces, on the other hand, have been introduced and studied much more 
recently. They form a natural extension of simplicial spaces, and model 
topological operads in the same way as simplicial spaces model 
topological categories. More precisely, there is a so-called complete 
Segal model structure on simplicial spaces \cite{Rezk} which is Quillen equivalent 
to the Dwyer-Kan model structure on topological categories \cite{Bergner}, and 
this equivalence extends to an equivalence between a complete Segal 
model structure on dendroidal spaces and a similar Dwyer-Kan style model 
structure on simplicial operads \cite{Cisinski-Moerdijk3}. There are also ``discrete" Quillen 
equivalent versions of these model categories, namely the category of 
simplicial sets with the Joyal model structure whose fibrant objects 
are the quasi-categories of \cite{Joyal} and \cite{Lurie}, and the category of dendroidal 
sets with the operadic model structure of \cite{Cisinski-Moerdijk1} whose fibrant objects are 
the quasi-operads of \cite{Moerdijk-Weiss}.

\medskip
There is a natural contravariant functor from the category $\Omega$ of trees 
to the category of finite sets and partial maps, opposite to Segal's 
category $\Gamma$, which simply assigns to a tree T its set of leaves $\lambda(T)$. 
Our main theorem is that this functor $\lambda$ induces a Quillen 
equivalence between suitable model structures on dendroidal spaces and 
on $\Gamma$-spaces, respectively:

\begin{thm}\label{thm:one}
The functor $\lambda$ induces a Quillen equivalence between the 
category of dendroidal spaces and that of $\Gamma$-spaces, where the former 
is equipped with the covariant localisation of the complete Segal model 
structure, and the latter is equipped with the model structure for 
special $\Gamma$-spaces of \cite{Bousfield-Friedlander}. 
\end{thm}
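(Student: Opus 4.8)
The plan is to realise $\lambda$ as a Quillen pair and then upgrade it to a Quillen equivalence only after the two Bousfield localisations have been performed. Writing $\lambda^{*}\colon \GammaSp \to \dSp$ for restriction along $\lambda$ and $\lambda_{!}$ for its left adjoint (pointwise left Kan extension), I would first check that $(\lambda_{!},\lambda^{*})$ is a Quillen pair for the unlocalised, levelwise (Reedy or injective) structures. This is essentially formal, since weak equivalences and fibrations there are detected objectwise on $\Omega^{\op}$ and on $\Gamma^{\op}$, so $\lambda^{*}$ visibly preserves them. It is worth emphasising that nothing like an equivalence can hold at this stage: the categories $\Omega$ and $\Gamma$ are genuinely inequivalent, so the entire content of the theorem lives in the interaction of $\lambda$ with the two localisations.

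Second, I would descend the adjunction to the localised structures by the universal property of left Bousfield localisation: $\lambda_{!}$ is left Quillen into the special $\Gamma$-space structure as soon as it carries the maps defining the covariant localisation to weak equivalences between special $\Gamma$-spaces, and dually I would verify that $\lambda^{*}$ sends each special $\Gamma$-space to a covariantly fibrant dendroidal space. The combinatorial heart of this verification is the way the covariant localisation forces the value on a tree $T$ to see only its leaves: the inner-edge contractions become weak equivalences, so a covariantly fibrant $Z$ satisfies $Z(T)\simeq Z(C_{\lambda(T)})$, and the remaining leaf-Segal maps impose $Z(C_{n})\simeq Z(C_{1})^{n}$. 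Under $\lambda^{*}$ these two requirements translate, via $(\lambda^{*}Y)(T)=Y(\lambda(T))$ and $\lambda(C_{n})=\{1,\dots,n\}$, into exactly the condition that the Segal maps $Y(n)\to Y(1)^{n}$ of $Y$ be equivalences, i.e. into specialness.

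Third, to promote the Quillen pair to an equivalence I would invoke the criterion that a Quillen adjunction is a Quillen equivalence once $\lambda^{*}$ reflects weak equivalences between fibrant objects and the derived unit is a weak equivalence on cofibrant objects. Reflection is again combinatorial and easy: every finite set arises as the leaf set $\lambda(C_{n})$ of a corolla, so $\lambda$ is surjective on objects of $\Gamma^{\op}$; since weak equivalences between fibrant objects in a localised structure are precisely the levelwise ones, a levelwise equivalence $\lambda^{*}Y\to\lambda^{*}Y'$ detected on all corollas forces a levelwise equivalence $Y\to Y'$.

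The hard part will be the derived unit. Here I would reduce, using that every cofibrant dendroidal space is a homotopy colimit of representable trees and that both localisations collapse trees to their leaves, to checking the unit on the generating corollas. On these the statement becomes a comparison between the dendroidal free-algebra construction and the free special $\Gamma$-space, that is, between two models of the free $E_{\infty}$-space (morally the $B\Sigma_{n}$-type object). Identifying these two models is exactly the sharpening of the Barratt-Priddy-Quillen theorem advertised in the abstract, and I expect that input — rather than any formal manipulation of the model structures — to be the genuine obstacle and the technical core of the proof.
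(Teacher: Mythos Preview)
Your outline matches the paper's strategy closely: establish the Quillen pair, pass to the localisations, observe that $\lambda^*$ is conservative (since every finite set is $\lambda(C_n)$), and reduce the derived unit to generators via a resolution argument. The paper does exactly this, using the projective rather than Reedy structure and a specific reduction lemma (Lemma~\ref{lem:reduction}) to pass from arbitrary cofibrant objects to coproducts of $\Omega[T]\otimes\Delta^n$.

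There is, however, a real gap in your final paragraph. You attribute the entire difficulty of the derived unit on generators to the special Barratt--Priddy--Quillen theorem, but that theorem only does half the work. Concretely, for $Z=\Omega[T]$ with $L=\lambda(T)$, the derived unit is
\[
\Omega[T]\;\longrightarrow\;\RR\lambda^*\bigl(\text{special fibrant replacement of }\FF(L,-)\bigr).
\]
The BPQ theorem identifies the fibrant replacement on the $\Gamma$-side as $B\Sigma^L$. But you are then left with showing that
\[
\Omega[\eta]\otimes L \;\longrightarrow\; \lambda^*(B\Sigma^L)
\]
is a covariant weak equivalence of \emph{dendroidal} spaces, and this is not a statement about $\Gamma$-spaces at all. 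The paper handles it by a separate and substantial piece of machinery, the dendroidal Yoneda lemma (Lemma~\ref{lem:yon} and Corollary~\ref{cor:yon}): one must recognise $\lambda^*(B\Sigma^L)$ as (equivalent to) the under-object $\sigma/\!*$ for $\sigma\colon L\to *$, and then invoke that $\Omega[\eta]\otimes U\to \sigma/X$ is always a covariant equivalence. Your sketch treats ``the dendroidal free-algebra construction'' and ``the free special $\Gamma$-space'' as two descriptions of the same thing whose identification \emph{is} BPQ; in fact they are computed in different categories, and bridging them requires this second, independent input. Without the Yoneda lemma (or an equivalent explicit computation of the covariant fibrant replacement of $\Omega[\eta]\otimes L$), the argument does not close.
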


As a consequence, a simple further left Bousfield localisation will then 
give a Quillen equivalence between a model structure on dendroidal 
spaces and the model structure on $\Gamma$-spaces whose fibrant objects are 
the \emph{very special} $\Gamma$-spaces of Bousfield and Friedlander, and hence 
a Quillen equivalence with the category of connective spectra. 

A key step in proving this theorem lies in identifying an explicit fibrant resolution 
of the discrete $\Gamma$-space $\Gamma(-,L)$ represented by a 
finite set $L$.  For the model structure corresponding to connective 
spectra, it is known \cite{Segal} that such a resolution is given by the 
$\Gamma$-space $B\Sigma^L$ corresponding to the symmetric monoidal category 
$\Sigma_L$ of finite sets labelled by L and bijections between them. As Segal explains,
this is essentially the content of the Barratt-Priddy-Quillen theorem. We 
will prove that the same is true \emph{unstably}, i.e. for the model structure for 
\emph{special} $\Gamma$-spaces. Our proof is partly based on ideas 
in \cite{Segal}, but does not use the relation to spectra. Instead, it is based on  a careful analysis of the interplay 
between the (generalised) Reedy and the projective model structures on 
$\Gamma$-spaces. In fact, we prove: 

\begin{thm}[Special Barratt-Priddy-Quillen theorem]\label{thm:introBPQ}
The canonical map of $\Gamma$-spaces $\Gamma(-, L) \to B\Sigma^L$ is a 
trivial cofibration in the Reedy model structure localised for special $\Gamma$-spaces. 
\end{thm}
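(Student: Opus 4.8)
The plan is to verify the two conditions defining a trivial cofibration in the localised Reedy structure separately: that $\Gamma(-,L)\to B\Sigma^L$ is a cofibration (which, being unchanged under a left Bousfield localisation, amounts to a normal monomorphism in the generalised Reedy structure), and that it is a weak equivalence for the special localisation. For the latter I would first record that the special localisations of the generalised Reedy and of the projective model structures have the same weak equivalences: before localising, the two structures share their levelwise weak equivalences and their homotopy mapping spaces, so localising both at the same set of maps enforcing the Segal condition produces one and the same class of local equivalences. This lets me check the cofibration in the Reedy structure, where representables are well behaved, and check the weak equivalence in the projective structure, where derived mapping spaces and homotopy colimits are easiest to control.

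For the cofibration, I would exhibit $\Gamma(-,L)\to B\Sigma^L$ as a normal monomorphism by analysing its relative latching maps. The target $B\Sigma^L$ is the nerve of the groupoid $\Sigma_L$ of $L$-labelled finite sets and bijections, and $\Gamma(-,L)$ sits inside it as the part of ``length at most one''; the point to verify is that the automorphism groups $\Sigma_n$ attached to the objects of $\Gamma$ act freely on the complement, so that each relative latching map is a monomorphism with free relative action. This is the more routine of the two parts.

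For the weak equivalence, I would first check that $B\Sigma^L$ is fibrant in the localised structure: being a nerve it is levelwise a Kan complex, and being Segal's $\Gamma$-space attached to the symmetric monoidal groupoid $(\Sigma_L,\sqcup)$ its Segal maps are equivalences, so it is special. It then remains to show $\Gamma(-,L)\to B\Sigma^L$ is a special equivalence, which I would do by comparing derived mapping spaces into an arbitrary special fibrant $\Gamma$-space $Z$. On the source, enriched Yoneda together with specialness give $\Map(\Gamma(-,L),Z)\simeq Z(L)\simeq Z(\langle 1\rangle)^{|L|}$. On the target I would present $B\Sigma^L$ through its word-length (skeletal) filtration, whose subquotients are assembled from representables carrying free symmetric-group actions, compute $\Map(B\Sigma^L,Z)$ as the resulting homotopy limit, and identify the comparison map with the equivalence $Z(L)\simeq Z(\langle 1\rangle)^{|L|}$. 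Concretely this identifies $B\Sigma^L$ with the free special $\Gamma$-space on $\Gamma(-,L)$, realised by the symmetric-power construction $\coprod_n (\Gamma(-,L)^{\otimes n})_{h\Sigma_n}$, whose unit is exactly our map.

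The main obstacle is this last computation: showing that, for special $Z$, the higher symmetric powers in the filtration contribute nothing to $\Map(B\Sigma^L,Z)$, so that mapping the free object in reduces to evaluating at the generators. This is precisely the unstable Barratt-Priddy-Quillen content, and the difficulty is to carry it out without invoking group completion or the passage to spectra; the commutative-monoid (special) structure of $Z(\langle 1\rangle)$ is what must be leveraged to collapse the higher terms. The projective model structure enters here because the symmetric-power presentation of $B\Sigma^L$ is cofibrant enough there for the homotopy-colimit computation of the mapping space to be valid, after which the agreement of the two localisations transports the conclusion back to the Reedy-special structure.
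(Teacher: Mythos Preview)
Your outline correctly separates the cofibration check from the local weak equivalence and rightly observes that the latter can be tested by mapping into an arbitrary special fibrant $Z$. The gap is entirely in the second half. Two issues:

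\medskip
\textbf{(1) The ``free special $\Gamma$-space'' identification is the theorem, not a lemma toward it.} Saying that $B\Sigma^L$ is the free special $\Gamma$-space on $\Gamma(-,L)$ is precisely the assertion that $\Gamma(-,L)\to B\Sigma^L$ is a local equivalence into a local object; invoking it to prove that same assertion is circular. Your symmetric-power formula $\coprod_n(\Gamma(-,L)^{\otimes n})_{h\Sigma_n}$ does describe $B\Sigma^L$ levelwise, but it is \emph{not} a coproduct decomposition of $\Gamma$-spaces: a partial map $A\nrightarrow B$ can lower word-length, so the structure maps mix the summands. At best you get a word-length \emph{filtration}, and mapping out yields a tower whose homotopy limit you would then have to identify with $Z(L)$. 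You acknowledge this as ``the main obstacle'' but give no mechanism for collapsing it; the hint that the commutative-monoid structure on $Z(\underline{1})$ should do the work is too vague to be a proof.

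\medskip
\textbf{(2) What the paper actually does is quite different and avoids this computation.} First, it reduces general $L$ to $L=\underline{1}$ by showing that the functor $L^*X=X(-\times L)$ is left Quillen for the Reedy-special structure and that $B\Sigma^L=L^*(B\Sigma)$, $\FF(L,-)\simeq L^*\FF(\underline 1,-)$. For $L=\underline 1$ the argument, following Segal, is constructive rather than filtration-theoretic: one introduces an auxiliary functor $F\mapsto B\Sigma_F$ on $\MM$-spaces; given a Reedy fibrant special $X$ and a point $v\in X(\underline 1)$, the fibre $\mathbf X_v$ of (a cofibrant replacement of) the Segal map $inv^*X\to pow(X)$ is a weakly contractible $\MM$-space, so the projection $B\Sigma_{\mathbf X_v}\to B\Sigma$ is a weak equivalence, while there is an explicit evaluation map $\xi:B\Sigma_{\mathbf X_v}\to X$ hitting $v$. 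This builds a section of $u^*:[B\Sigma,X]\to\pi_0 X(\underline 1)$, and an explicit homotopy (using a contracting homotopy of $\mathbf X_v$) shows $u^*$ is injective. No filtration of $B\Sigma$ and no computation of a derived mapping space as a limit is needed; the specialness of $X$ enters only through the contractibility of $\mathbf X_v$.

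\medskip
So your plan is a reasonable high-level scaffold, but the step you flag as the obstacle is the entire content of the theorem, and you have not supplied an argument for it. The paper's route sidesteps the filtration analysis altogether by constructing the inverse map by hand.
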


The $\Gamma$-space $B\Sigma^L$ is not Reedy fibrant, so the theorem does not give a fibrant 
resolution for the Reedy model structure but, as a consequence of the 
theorem, it does for the projective model structure. Our proof gives a sharper result than the 
one for the \emph{very special} model structure, but we also believe it is perhaps more 
explicit and direct than other proofs in the literature. 

Another crucial  ingredient for the proof of Theorem \ref{thm:one} is a somewhat 
technical result which we refer to as a \emph{Yoneda lemma} for dendroidal 
spaces, since it is reminiscent of the Yoneda lemma for small 
categories. For a family of objects $\sigma: U \to \textup{ob}(C)$ in such a category $C$, 
this lemma can be interpreted as stating that the functor $U \to \sigma/C$ given by the 
identities on these objects gives a resolution of $U \to C$ as a cofibred category over $C$. 
We will construct for any dendroidal Segal space X and any family of objects 
of $X$, i.e. any map $\sigma : U \to X_\eta$ of simplicial sets, a new dendroidal 
space $\sigma/X$ and prove: 

\begin{thm}[Yoneda Lemma] This construction provides a weak equivalence 
\[
	\begin{tikzpicture}[descr/.style={fill=white}, baseline=(current bounding box.base)] ] 
	\matrix(m)[matrix of math nodes, row sep=1.5em, column sep=1.5em, 
	text height=1.5ex, text depth=0.25ex] 
	{
	\Omega[\eta] \otimes U & & \sigma/X \\
	& X & \\
	}; 
	\path[->,font=\scriptsize] 
		(m-1-1) edge node [auto] {$$} (m-1-3)
		(m-1-1) edge node [auto] {$$} (m-2-2)
		(m-1-3) edge node [auto] {$$} (m-2-2);
	\end{tikzpicture} 
\]
in the covariant model structure on dendroidal spaces over X, into a covariant fibration $\sigma/X \to X$.
\end{thm}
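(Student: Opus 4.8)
The plan is to prove the two assertions separately: that the projection $\sigma/X \to X$ is a covariant fibration, and that the canonical map $j\colon \Omega[\eta]\otimes U \to \sigma/X$ is a weak equivalence — indeed a trivial cofibration — in the covariant model structure over $X$. I would first reduce the weak-equivalence statement to the case of a single object. By its construction the assignment $\sigma\mapsto\sigma/X$ is obtained by base change along $\sigma\colon U\to X_\eta$ from a universal slice over $X_\eta$, so as a functor of the variable $U$ it preserves colimits; moreover $j$ is a monomorphism, hence a cofibration, by inspection of the "identity" dendrices. Since $j$ is natural in $U$ and (trivial) cofibrations are stable under pushout, transfinite composition and — via left properness of the covariant structure — gluing, a standard induction over the nondegenerate simplices of $U$ reduces the claim to $U=\Delta[n]$. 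As $\Delta[n]\to\Delta[0]$ is a simplicial homotopy equivalence and the construction is homotopy invariant in $U$, two-out-of-three reduces everything further to $U=\Delta[0]=\Omega[\eta]$, i.e.\ a single colour $x\in X_\eta$ and the inclusion $\{x\}\colon \Omega[\eta]\to x/X$.

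For the fibration statement I would verify the right lifting property of $\sigma/X\to X$ against the generating covariant trivial cofibrations. Up to the localisation defining the covariant structure these are the outer (root) horn inclusions $\Lambda^{\mathrm{r}}[T]\to\Omega[T]$, tensored with the boundary inclusions $\partial\Delta[n]\to\Delta[n]$ in the simplicial direction. By the definition of the slice, a lifting problem for $\sigma/X\to X$ of this shape unwinds into a filling problem in $X$ itself on the tree obtained by adjoining one initial edge, and here the hypothesis that $X$ is a dendroidal Segal space supplies the required fillers. This is precisely the point at which the Segal condition on $X$ is used; without it the projection need not be fibrant in the localised structure.

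The heart of the argument is then the single-object case: $\{x\}\colon \Omega[\eta]\to x/X$ is covariant anodyne. I would describe $x/X$ explicitly as the dendroidal space whose $T$-dendrices are the dendrices of $X$ on the tree obtained from $T$ by grafting a new root edge coloured $x$ (a dendroidal left cone), and then filter $x/X$ by the nondegenerate dendrices of $X$, ordered by the dimension of the underlying tree and the number of vertices lying above the new initial edge. Each stage is obtained from the previous one by a pushout along a root-horn inclusion $\Lambda^{\mathrm{r}}[T]\to\Omega[T]$, possibly tensored with a boundary inclusion $\partial\Delta[n]\to\Delta[n]$. As these root-horn inclusions are among the generating covariant anodynes, the inclusion $\{x\}\to x/X$ is exhibited as a transfinite composition of pushouts of covariant trivial cofibrations, and is therefore itself one.

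The main obstacle I anticipate is exactly the bookkeeping in this last filtration. The clean "initial vertex" argument that works for the slice $S_{s/}$ over a simplicial set $S$ is complicated here by the branching of trees: one must organise the nondegenerate dendrices so that each outer face meeting the new root edge is attached in the correct order, check that grafting and the inner faces are compatible with the chosen ordering, and interleave the simplicial direction — we are dealing with dendroidal spaces, not merely dendroidal sets — so that every attaching map is genuinely of the generating anodyne type and remains a trivial cofibration after the covariant localisation. Once this combinatorial filtration is in place, combining it with the reduction of the first paragraph and the fibration statement of the second yields the asserted weak equivalence $\Omega[\eta]\otimes U\to\sigma/X$ into the covariant fibration $\sigma/X\to X$.
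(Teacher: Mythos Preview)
Your proposal rests on a misreading of the construction of $\sigma/X$. In the paper (Definition~\ref{defn:underoperad}), $(\sigma/X)_T$ is a homotopy colimit over the groupoid $C(T)$ of ways to attach a corolla to each \emph{leaf} of $T$, of the spaces $Z_{T^\sharp}$ obtained by pulling back $X_{T^\sharp}$ along $\sigma$ on leaf labels. It is not the ``graft a new root edge'' cone you describe. Correspondingly, the covariant model structure is generated by inner horns together with \emph{top-vertex} horns (see the Remark after the definition of covariant fibration and the localising maps~(\ref{eq:tophorn}) selecting the leaves), not root horns. So both your fibration argument and your filtration argument are aimed at the wrong objects: the lifting problems you solve are not the ones that characterise covariant fibrations, and the single-colour slice $x/X$ does not have the shape you assume. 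Your reduction to $U=\Delta[0]$ is also suspect: the construction involves $\map(\lambda(T^\sharp),U)=U^{\lambda(T^\sharp)}$ inside a homotopy pullback, and finite products do not commute with the colimits you would need to run the induction over the simplices of $U$.

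The paper's proof is entirely different in spirit. It first shows $\sigma/X\to X$ is a covariant fibration by directly verifying that the square~(\ref{eq:covfib}) is homotopy cartesian, using the Segal condition to decompose $X_{T^\sharp}$ as a pullback of $X_T$ against a product of corolla spaces, and the universality of homotopy colimits of spaces under base change. For the weak-equivalence part it does \emph{not} filter $\sigma/X$ combinatorially; instead it works with derived mapping spaces over $B$. The key device is a ``double-slice'' trick: for any $\sigma$ there is a canonical lift $s\sigma:U\to(\sigma/X)_\eta$, and one checks that $s\sigma$ is always initial (i.e.\ $s\sigma/(\sigma/X)\to\sigma/X$ is a levelwise equivalence). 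Combining this with the easy Lemma~\ref{lem:yonES} (that slicing intertwines with covariant fibrations), one produces a commuting square of mapping spaces in which three of the four arrows are equivalences, forcing the fourth to be as well. No reduction to a single colour, and no anodyne filtration, is used.
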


This covariant model structure is a relative version of the model 
structure featuring in Theorem \ref{thm:one}, and will be discussed in detail in Section \ref{sec:Yoneda}.

Using this dendroidal Yoneda lemma, we can also relatively easily deduce 
two other results. The first of these concerns the relation between 
algebras over a simplicial operad and dendroidal spaces. For this, we 
recall that for a (coloured) $\Sigma$-free simplicial operad $P$,  the 
category of $P$-algebras in simplicial sets carries a Quillen model 
structure transferred from the classical Kan-Quillen model structure on 
simplicial sets \cite{Berger-Moerdijk}. 

\begin{thm}\label{thm:intro-alg}
For any $\Sigma$-free simplicial operad $P$, there is a natural right Quillen equivalence from the model category of 
$P$-algebras to the category of dendroidal spaces over the nerve of $P$, when the latter is equipped with the covariant model structure. 
\end{thm}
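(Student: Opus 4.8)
The plan is to realise the right Quillen functor as a relative nerve construction $N_P\colon P\text{-alg}\to \dSp/N(P)$, left adjoint to a ``free $P$-algebra'' functor $L$, and then to verify the two standard criteria for a Quillen equivalence with $N_P$ on the right: that $N_P$ reflects weak equivalences between fibrant objects, and that the derived unit is a weak equivalence on every cofibrant object. Concretely, $N_P$ should send a $P$-algebra $A=(A_c)$ to the dendroidal space over $N(P)$ produced by the (homotopy-coherent) relative nerve, whose fiber over a color $c$, i.e.\ over a vertex of $N(P)_\eta$, is the space $A_c$; the left adjoint $L$ is then forced by its effect on representables and by the fact that it preserves all colimits.

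First I would check that $(L, N_P)$ is a Quillen adjunction. By the transferred model structure \cite{Berger-Moerdijk}, the weak equivalences and fibrations of $P$-algebras are created from the levelwise Kan--Quillen structure on $\Sp$, and it is precisely $\Sigma$-freeness of $P$ that guarantees this transfer exists; on the other side I would use the explicit description of covariant fibrations over $N(P)$ and of covariant equivalences between them. It then suffices to see that $L$ carries the generating (trivial) cofibrations of the covariant structure to (trivial) cofibrations of algebras, which reduces to a direct computation with free algebras.

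The heart of the argument is the derived unit, and this is where the dendroidal Yoneda lemma enters. For a family $\sigma\colon U \to N(P)_\eta$ of colors, the object $L(\Omega[\eta]\otimes U \to N(P))$ is the free $P$-algebra on $U$ generators sitting at the colors selected by $\sigma$, and I would identify $N_P$ of this free algebra, up to covariant equivalence, with the slice $\sigma/N(P)$. Under this identification the derived unit $\Omega[\eta]\otimes U \to N_P\,L(\Omega[\eta]\otimes U)$ becomes the comparison map $\Omega[\eta]\otimes U \to \sigma/N(P)$, which the Yoneda lemma asserts is a covariant weak equivalence into a covariant fibration. Hence the derived unit is an equivalence on the generators $\Omega[\eta]\otimes U$.

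To pass from generators to arbitrary objects I would argue by cell induction: every dendroidal space is a colimit of representables $\Omega[T]\otimes U$, and in the covariant model structure each such representable is equivalent to $\Omega[\eta]\otimes U$, so the objects $\Omega[\eta]\otimes U$ generate under homotopy colimits; since $L$ preserves all colimits and $N_P$ (derived) preserves the relevant homotopy colimits, the covariant structure being a left Bousfield localisation in which these are detected levelwise, the derived unit is a weak equivalence throughout. Reflection of weak equivalences between fibrant objects then follows from the fiberwise criterion for covariant equivalences together with the fact that the fibers of $N_PA$ recover the spaces $A_c$, so that $N_Pf$ is a covariant equivalence exactly when $f$ is a levelwise, hence genuine, weak equivalence of algebras. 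The main obstacle I anticipate is precisely the identification of $N_P$ of a free algebra with $\sigma/N(P)$, together with the check that $N_P$ commutes with the homotopy colimits used in the induction; both demand matching a bar-type resolution of an algebra against the dendroidal slice construction carefully enough to keep the Yoneda comparison natural.
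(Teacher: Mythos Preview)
Your proposal is correct and follows essentially the same route as the paper: the right adjoint $\sG_P=N_P$ is the relative nerve whose value at $\eta$ is $\coprod_c A(c)$, its left adjoint $\mathcal F_P=L$ sends $\Omega[T]\otimes K$ to the free $P$-algebra on $\lambda(T)\times K$, conservativity of $\sG_P$ is read off from the fiber description, and the derived unit on $\Omega[\eta]\otimes U$ is handled via the identification $\sG_P\mathcal F_P(\Omega[\eta]\otimes U)\cong \sigma/NP$ (this is exactly where $\Sigma$-freeness is used) together with the Yoneda lemma. The only sharpening the paper supplies over your ``cell induction'' sketch is a concrete reduction lemma (geometric realisation of a canonical simplicial resolution, together with the observations that $\sG_P$ commutes with realisation and that realisation preserves covariant fibrations), which packages precisely the two obstacles you flag at the end.
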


This covariant model structure can in fact be defined on the category $\dSp{/X}$ of 
dendroidal spaces over any other dendroidal space $X$. 
For dendroidal spaces satisfying a Segal condition, as the nerves of simplicial operads 
do, it has the following invariance property: 

\begin{thm}[Invariance theorem]\label{thm:intro-inv}
Let $f : X \to Y$ be a map between 
dendroidal spaces satisfying the Segal condition. If this map is a Dwyer-Kan equivalence (i.e. a weak equivalence in the complete Segal model structure), then the induced Quillen pair 
\[
f_! : \dSp{/X} \leftrightarrows \dSp{/Y} : f^*
\]
yield a Quillen equivalence for the covariant model structures over $X$ and over $Y$, respectively. 
\end{thm}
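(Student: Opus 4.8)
The plan is to deduce the statement from two inputs: the elementary fact that base change along a weak equivalence is a Quillen equivalence of slice categories, and the detection of covariant equivalences on fibres afforded by the covariant model structure together with the Yoneda Lemma. First I would record that $(f_!, f^*)$ is a Quillen pair for the covariant localisations. The functor $f_!$ (post-composition with $f$) preserves normal monomorphisms and sends the generating covariant anodyne maps over $X$ to generating covariant anodyne maps over $Y$, while its right adjoint $f^* = X \times_Y (-)$ sends covariant fibrations over $Y$ to covariant fibrations over $X$, a pullback of a covariant fibration being one. Hence $f_!$ is left Quillen for the localised structures. Since the complete Segal model structure is left proper, base change along the weak equivalence $f$ is already a Quillen equivalence of the underlying slice model structures; the whole point is to show that this survives the covariant localisation.

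To prove the Quillen equivalence I would use the criterion that it suffices to check (i) that $f^*$ reflects covariant equivalences between covariant-fibrant objects, and (ii) that the derived unit $A \to f^* R(f_! A)$ is a covariant equivalence over $X$ for every cofibrant $A \to X$, where $R$ denotes a covariant-fibrant replacement over $Y$. For (i) I would invoke the fibrewise characterisation of covariant equivalences: over a fixed dendroidal Segal space, a map of covariant fibrations is a covariant equivalence precisely when it induces a weak equivalence on the fibre over each colour $c \in X_\eta$, and these fibres are invariant up to weak equivalence under passage to an equivalent colour. Given a map $g$ of covariant fibrations over $Y$ with $f^* g$ a covariant equivalence over $X$, the fibre of $g$ over a colour $f(c)$ is the fibre of $f^* g$ over $c$, hence a weak equivalence; since $f$ is a Dwyer--Kan equivalence it is essentially surjective on colours, so every colour of $Y$ is equivalent to one of the form $f(c)$, and fibre-invariance then forces $g$ to be a fibrewise, hence a covariant, equivalence.

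The heart of the argument is (ii), where I would use the Yoneda Lemma to reduce to generators. The cofibrant objects of $\dSp/X$ are built by covariant homotopy colimits from the representables $\Omega[\eta] \otimes U \to X$ attached to families of colours $\sigma \colon U \to X_\eta$, and by the Yoneda Lemma each such object is covariantly equivalent over $X$ to the slice $\sigma/X$. Since $f_!(\Omega[\eta] \otimes U \to X) = (\Omega[\eta] \otimes U \to Y)$ for the induced family $\sigma' \colon U \to X_\eta \to Y_\eta$, the Yoneda Lemma over $Y$ identifies $R f_!$ of a generator with $\sigma'/Y$, so that on generators the derived unit becomes the canonical comparison $\sigma/X \to f^*(\sigma'/Y)$. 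On the fibre over a colour $c$ this comparison is the map of mapping objects induced by $f$, namely $\Map_X(\sigma(u),c) \to \Map_Y(f\sigma(u), f(c))$, which is a weak equivalence precisely because $f$ is fully faithful as a map of Segal spaces; thus the derived unit is a covariant equivalence on all generators.

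I expect the main obstacle to be upgrading (ii) from generators to arbitrary cofibrant objects. The derived functor $L f_! = f_!$ preserves covariant homotopy colimits, but $f^*$ does not preserve colimits in general, so one cannot naively propagate the equivalence along the cellular filtration of a cofibrant object. The way I would resolve this is to establish a Beck--Chevalley/base-change property showing that $f^*$ commutes, up to covariant equivalence, with the homotopy colimits that build cofibrant objects; this holds here because a Dwyer--Kan equivalence is both smooth and proper, so that $f^*$ is homotopy-invariant for covariant colimits over the base. Equivalently, one may organise the whole argument as a descent statement: the covariant structures over $X$ and over $Y$ are left Bousfield localisations of the slice structures at matched sets of maps, and the slice Quillen equivalence descends to the localisations once one verifies --- which is exactly the content of (i) together with the generator computation in (ii) --- that $f^*$ carries the covariant-local objects over $Y$, up to covariant equivalence, bijectively onto those over $X$. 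Establishing this base-change compatibility, and in particular the invariance of fibres along equivalences of colours for a possibly non-complete Segal space, is the technical crux; the Segal condition on $X$ and $Y$ is what makes the requisite compositions and the fibrewise analysis available.
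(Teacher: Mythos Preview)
Your outline is essentially the paper's proof: the paper splits the Dwyer--Kan equivalence into essential surjectivity (used for conservativity of $\RR f^*$, your step (i)) and full faithfulness (used, via the Yoneda lemma in the form of Corollary~\ref{cor:yon}, to show the derived unit is an equivalence on representables, your step (ii)), and then concludes by the triangle identities.

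One correction on the point you flag as the main obstacle. You write that $f^*$ does not preserve colimits and invoke smoothness/properness of a Dwyer--Kan equivalence to justify a Beck--Chevalley property. In fact no hypothesis on $f$ is needed here: in simplicial presheaves, homotopy colimits are stable under homotopy pullback, so $\RR f^*$ commutes with \emph{all} homotopy colimits for any $f$; for the same reason the geometric realisation of a degreewise covariant fibration over $Y$ is again a covariant fibration. The paper packages this as Lemma~\ref{lem:reduction}, applied to the canonical simplicial resolution of a cofibrant object by coproducts of $\Omega[T]\otimes\Delta^n$'s, which reduces the derived unit check to such coproducts and then, via the covariant equivalence $\coprod_{\lambda(T)}\Omega[\eta]\to\Omega[T]$, to objects of the form $\Omega[\eta]\otimes U$ where your Yoneda/fully-faithful argument applies.
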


Using the Quillen equivalence mentioned at the beginning between the complete Segal model structure 
on dendroidal spaces and the operadic model structure on dendroidal sets, we derive some results 
concerning dendroidal sets. First of all, this equivalence localises to an equivalence between 
the covariant model structures on dendroidal spaces  of Theorem \ref{thm:one} and the covariant model structure 
on dendroidal sets, and hence we obtain the following corollary: 

\begin{cor}\label{cor:dsets-gamma}
There is a zigzag of Quillen equivalences between the category of dendroidal 
sets equipped with the covariant model structure and category of $\Gamma$-spaces with the special model 
structure. 
\end{cor}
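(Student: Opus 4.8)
The plan is to produce the required zigzag by composing the Quillen equivalence of Theorem~\ref{thm:one} with a Quillen equivalence between dendroidal sets and dendroidal spaces, both carrying their covariant model structures. Since Theorem~\ref{thm:one} already supplies, via $\lambda$, a Quillen equivalence between $\dSp$ equipped with the covariant localisation of the complete Segal model structure and $\GammaSp$ equipped with the special model structure, the only new ingredient to establish is a (zigzag of) Quillen equivalence(s)
\[
\dS_{\mathrm{cov}} \;\simeq\; \dSp_{\mathrm{cov}},
\]
where the left-hand side carries the covariant model structure on dendroidal sets and the right-hand side the covariant model structure of Theorem~\ref{thm:one}.

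The input for this is the Cisinski--Moerdijk Quillen equivalence between $\dS$ with the operadic model structure and $\dSp$ with the complete Segal model structure, recalled in the introduction. The point is that each of the two covariant model structures is a left Bousfield localisation of one of these: the covariant model structure on $\dS$ is a localisation of the operadic structure, while the covariant model structure on $\dSp$ is, by its very definition in Theorem~\ref{thm:one}, a localisation of the complete Segal structure. I would then invoke the general principle that a Quillen equivalence $F \dashv G$ descends to a Quillen equivalence between left Bousfield localisations $L_S$ of the source and $L_{\mathbb{L}F(S)}$ of the target, the localising class on one side being transported to a localising class on the other by the derived left adjoint.

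The main obstacle is to verify that these two covariant localisations correspond to one another under this descent, that is, that a map of dendroidal sets is a covariant weak equivalence if and only if its image is a covariant weak equivalence of dendroidal spaces. I expect the most transparent route is through an intrinsic description of both structures in terms of covariant (left) fibrations, whose defining lifting property is against the root horn inclusions, and then to match the two localising classes on generators accordingly. Once this identification is in place, the derived left adjoint carries the maps defining the covariant localisation on one side to those defining it on the other, and the descent principle yields the Quillen equivalence $\dS_{\mathrm{cov}} \simeq \dSp_{\mathrm{cov}}$.

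Composing this equivalence with Theorem~\ref{thm:one} then produces the desired zigzag of Quillen equivalences between $\dS$ with the covariant model structure and $\GammaSp$ with the special model structure, which is precisely the assertion of the corollary.
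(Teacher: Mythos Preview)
Your approach is correct and matches the paper's: one composes the Quillen equivalence of Theorem~\ref{thm:one} with the Quillen equivalence $\dS_{cov}\simeq\dSp_{R,cov}$ obtained by localising the Cisinski--Moerdijk equivalence $\dS_{oper}\simeq\dSp_{RSC}$ (this is exactly the content of Remark~\ref{rem:dsets-cov} and diagram~(\ref{eq:plong}), together with Proposition~\ref{prop:covfib_is_compl} which ensures the covariant structure is a further localisation of the complete one). One small correction to your aside: covariant (left) fibrations lift against inner and \emph{top vertex} horns (cf.\ Remark~4.2 and the localising maps~(\ref{eq:tophorn})), not root horns---root horns pertain to the contravariant case.
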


Next, the invariance theorem implies that for any dendroidal space $X$ 
satisfying the Segal condition, the covariant model structure on 
dendroidal spaces over $X$ is equivalent to the one over its completion. 
For a simplicial operad $P$, the completion of the nerve of $P$ is equivalent to
the homotopy coherent nerve $w^*(P)$ of $P$ \cite{Cisinski-Moerdijk3}. Hence, the invariance 
theorem will enable us to relate our comparison theorem to the work of Heuts, who was the first to study dendroidal covariant model structures and relate these to algebras over operads. In particular, we recover the following result, orginally proved by means of a model structure on marked 
dendroidal sets. 

\begin{cor}[of Theorems \ref{thm:intro-alg} and \ref{thm:intro-inv}, \cite{Heuts}]\label{cor:p-algs-dsets}
For any $\Sigma$-free simplicial operad $P$, there is a zig-zag of Quillen equivalences between the category of 
$P$-algebras and that of dendroidal sets over the homotopy coherent nerve of $P$. 
\end{cor}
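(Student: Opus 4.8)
The plan is to assemble the required zig-zag out of the three inputs recorded in the paragraphs just above: Theorem \ref{thm:intro-alg}, the Invariance theorem (Theorem \ref{thm:intro-inv}), and the Quillen equivalence between the complete Segal model structure on dendroidal spaces and the operadic model structure on dendroidal sets. First, I would apply Theorem \ref{thm:intro-alg} to the $\Sigma$-free simplicial operad $P$, producing a right Quillen equivalence between the category of $P$-algebras and the category $\dSp{/N(P)}$ of dendroidal spaces over the nerve $N(P)$, equipped with the covariant model structure. This is the first link, and since it is a \emph{right} Quillen equivalence while the subsequent links are realised by left Quillen functors, the composite is genuinely a zig-zag rather than a single chain of left adjoints.

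Second, I would observe that $N(P)$ satisfies the Segal condition, as nerves of simplicial operads always do. Let $\eta : N(P) \to \widehat{N(P)}$ be the completion of $N(P)$ into a complete dendroidal Segal space. Then $\widehat{N(P)}$ again satisfies the Segal condition, and $\eta$ is by construction a weak equivalence in the complete Segal model structure, i.e. a Dwyer-Kan equivalence between objects satisfying the Segal condition. The hypotheses of the Invariance theorem are therefore met, so the induced pair $\eta_! : \dSp{/N(P)} \leftrightarrows \dSp{/\widehat{N(P)}} : \eta^*$ is a Quillen equivalence for the covariant model structures. This supplies the second link.

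Third, I would invoke \cite{Cisinski-Moerdijk3}, according to which the completion $\widehat{N(P)}$ corresponds, under the Quillen equivalence between the complete Segal model structure on dendroidal spaces and the operadic model structure on dendroidal sets, to the homotopy coherent nerve $w^*(P)$. Localising this equivalence over the base — passing to the slice categories and matching up the covariant localisations on each side, exactly as in the absolute comparison underlying Corollary \ref{cor:dsets-gamma} — yields a Quillen equivalence $\dSp{/\widehat{N(P)}} \simeq \dS{/w^*(P)}$ for the covariant model structures. Composing the three links produces the desired zig-zag of Quillen equivalences between $P$-algebras and dendroidal sets over $w^*(P)$.

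The routine parts are the first and second links, which are direct applications of the stated theorems once one notes that $\widehat{N(P)}$ satisfies the Segal condition and that $\eta$ is a Dwyer-Kan equivalence. I expect the main obstacle to be the third link: verifying that the space/set comparison is compatible with passage to slices and with the covariant localisations, so that it descends to a Quillen equivalence over the base $w^*(P)$ rather than merely over the point. The identification $\widehat{N(P)} \simeq w^*(P)$ itself is the key external input, imported from \cite{Cisinski-Moerdijk3}, and it is this identification that lets the whole zig-zag land on dendroidal sets over the homotopy coherent nerve, thereby recovering the result of \cite{Heuts}.
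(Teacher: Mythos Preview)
Your proposal is correct and follows essentially the same route as the paper's proof: apply Theorem~\ref{thm:intro-alg}, then the Invariance theorem to pass from $NP$ to its completion $\widehat{NP}$, then the Cisinski--Moerdijk comparison together with the identification of $w^*(P)$ with $\widehat{NP}_0$. The one point the paper makes explicit that you leave as an anticipated obstacle is the third link: the slice comparison between dendroidal spaces over a complete Segal object $X$ and dendroidal sets over $X_0$ descends to the covariant model structures precisely because the covariant structure is a localisation of the complete one (Proposition~\ref{prop:covfib_is_compl}).
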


A further localisation of the covariant model structure on dendroidal sets gives a model structure 
which is Quillen equivalent to that of very special $\Gamma$-spaces (Corollary \ref{cor:dsets-gamma}), or to group-like 
$E_\infty$-spaces (Corollary \ref{cor:p-algs-dsets}), and hence to infinite loop spaces. This 
model structure is studied under the name ``stable model structure" by Ba{\v{s}}i\'{c} and Nikolaus 
\cite{Basic-Nikolaus}. 

\medskip

The plan of our paper, then, is as follows: In Section \ref{sec:review}, we begin by 
reviewing the Reedy and projective model structures on diagrams of 
spaces indexed by so-called generalised Reedy categories, of which the categories $\Omega$ and $\Gamma$ are examples. In Section \ref{sec:dendroidalspaces}, we 
review the basic terminology concerning dendroidal spaces and the various 
model structures these carry. In Section \ref{sec:Yoneda}, we provide the necessary details 
concerning the covariant model structure and we prove our dendroidal 
Yoneda lemma. From this lemma, we easily derive in Section \ref{sec:invariance} the Invariance Theorem and in Section \ref{sec:algebras} our Theorem \ref{thm:intro-alg} about $P$-algebras. In Section \ref{sec:gamma-vs-dend}, we prove Theorem \ref{thm:one}. As mentioned above, the proof is based on the Yoneda lemma and our version of the Barratt-Priddy-Quillen theorem for \emph{special} $\Gamma$-spaces (Theorem \ref{thm:introBPQ}). The proof of the latter theorem is independent of the dendroidal theory, and can be read by itself. It only requires some notions from Section \ref{sec:review}. For this reason, we have written this proof in the form of an Appendix. 

\subsection*{Acknowledgements} We thank NWO for supporting several mutual visits. The first author was supported by FCT through grant SFRH/BPD/99841/2014.

\section{Review of generalised Reedy categories}\label{sec:review}

In this section we will review the definition of generalised Reedy categories and discuss the corresponding projective and Reedy model structures on their categories of presheaves.

\subsection{Generalised Reedy categories}  A generalised Reedy category - briefly Reedy category - is a small category $\RR$ equipped with a degree function $$d : \textup{objects}(\RR) \to \NN$$ and two classes of maps $\RR^-$ and $\RR^+$, both closed under composition. This structure is required to satisfy the following axioms: 
\begin{enumerate}
\item If $f : r \to s$ belongs to $\RR^+$ then $d(r) \leq d(s)$; and if it belongs to $\RR^-$ then $d(r) \geq d(s)$.
\item Every morphism $f$ in $\RR$ factors as $f = m e$ where $m \in \RR^+$ and $e \in \RR^-$, and this factorization is unique up to isomorphism.
\item A morphism belongs to both $\RR^+$ and $\RR^-$ if and only if it is an isomorphism. Also, if $f$ belongs to $\RR^-$ or $\RR^+$ and $f$ is degree-preserving then $f$ is an isomorphism.
\item For any isomorphism $\theta$ in $\RR$, if there is an $f \in \RR^+$ with $f \theta = f$ or a $g \in \RR^-$ with $\theta g = g$, then $\theta$ is the identity.
\end{enumerate}

This definition is that of a \emph{dualizable generalised} Reedy category of \cite{Berger-Moerdijk2}, but for brevity we will refer to these as \emph{Reedy categories}. We shall write $r \plusarrow s$ to indicate that the morphism belongs to $\RR^+$ and similarly for $\RR^-$. We call a morphism in $\RR^+$ \emph{positive}, and \emph{strictly positive} if it is not an isomorphism; similarly for \emph{negative} morphisms and $\RR^-$.

\subsection{The projective model structure} Let $\RR$ be any small category (not necessarily Reedy), and consider the category 
\[
\RRSp
\]
of simplicial presheaves on $\RR$; i.e. the category of contravariant functors from $\RR$ into simplicial sets. If $X$ is such a functor, we shall write $f^* : X_r \to X_s$ (or $f^* : X(r) \to X(s)$) for the action of a morphism $f : s \to r$ in $\RR$. This category comes equipped with a proper combinatorial model structure, in which a map $X \to Y$ is a weak equivalence (respectively, a fibration) if for each object $r$ in $\RR$ the map $X_r \to Y_r$ is a weak equivalence (respectively, a fibration) in the Kan-Quillen model structure on simplicial sets. (See for example \cite{Heller}.) 

\quad The category $\RRSp$ is tensored over simplicial sets. Indeed, if $X$ is a simplicial presheaf and $M$ is a simplicial set we can form a new simplicial presheaf $X \otimes M$ by $(X \otimes M)_r = X_r \times M$. Any object $r$ of $\RR$ defines a representable set-valued presheaf $\RR(-,r)$ which we view as an object of $\RRSp$ with discrete values (constant simplicial sets). With this notation in place, the generating cofibrations and trivial cofibrations are the morphisms of the form
\[
\RR(-,r) \otimes \partial \Delta[n] \to \RR(-,r) \otimes \Delta[n]
\]
(for $n\geq0$ and $0 \leq k \leq n$), and 
\[
\RR(-,r) \otimes \Lambda^k[n] \to \RR(-,r) \otimes \Delta[n]
\]
(for $n\geq1$ and $0 \leq k \leq n$), respectively.

\quad For later use, we also recall that $\RRSp$ is a category enriched in simplicial sets, with simplicial hom-sets defined for presheaves $X$ and $Y$ by
\[
\map (X,Y)_n = \Hom_{\RR} (X \otimes \Delta[n], Y) \; ,
\]
the right-hand side being the \emph{set} of natural transformations between presheaves on $\RR$.

\subsection{Resolutions and geometric realisation}
In the projective structure, the cofibrant objects are (retracts of) coproducts of objects of the form $\RR(-,r) \otimes K$. As is well known, each object $X$ has a canonical simplicial resolution $\mathbb{X}_\bullet$ by such cofibrant objects, where
\[
\mathbb{X}_n = \coprod_{r_0 \to \dots \to r_n} \RR(-,r_0) \otimes X_{r_n} \; .
\]
This means that $X$ is weakly equivalent to the geometric realization of this simplicial object. If we express $X_{r_n}$ itself as the realisation of a simplicial space which in each degree is a sum of standard simplices, then we obtain another canonical simplicial resolution $\mathbb{Y}_\bullet$ where each $\mathbb{Y}_n$ is a coproduct of objects of the form $\RR(-,r) \otimes \Delta^n$.

Notice that a simplicial object in $\RR$-spaces is a bisimplicial $\RR$-set and its realisation agrees with its diagonal. In particular, there is no need to derive it. In more general contexts, by the phrase \emph{geometric realisation} we will always mean the homotopy colimit (over $\Delta)$.

We will use this type of resolution and properties of geometric realisation in a general form that we now state but which will only be used much later, in the proofs in sections \ref{sec:invariance}, \ref{sec:algebras}, and \ref{sec:gamma-vs-dend}.

Let
\[
g_! : \mathbf{F} \leftrightarrows \mathbf{E} : g^*
\]
be a Quillen pair, which restricts to a Quillen pair
\[
\widetilde{g}_! : \mathbf{F}_{loc} \leftrightarrows \mathbf{E}_{loc} : \widetilde{g}^*
\]
of given left Bousfield localisations.  In the cases of interest to us, $\mathbf{F}$ will be the category of dendroidal spaces with the projective model structure (or a slice of it).

\begin{lem}\label{lem:reduction}
In the setting above, suppose that
\begin{enumerate}
\item[(a)] $\RR g^*$ commutes with geometric realisation, and
\item[(b)] the geometric realisation of a degreewise local simplicial object in $\mathbf{E}$ is local.
\end{enumerate}
If $W_\bullet$ is a simplicial object in $\mathbf{F}$ for which the derived unit map
$
W_n \to \RR \widetilde{g}^* \LL \widetilde{g}_! W_n
$
is a local weak equivalence for each $n$, then 
$
|W_\bullet| \to \RR \widetilde{g}^* \LL \widetilde{g}_! |W_{\bullet}|
$
is a local weak equivalence as well.
\end{lem}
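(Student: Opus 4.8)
The plan is to prove that the localised derived unit commutes with geometric realisation on objects of the form $|W_\bullet|$, so that the global statement follows from the degreewise hypothesis. I would organise everything around the homotopy-commutative square
\[
\begin{CD}
|W_\bullet| @>{|\eta|}>> |\RR\widetilde{g}^*\LL\widetilde{g}_! W_\bullet| \\
@| @VVV \\
|W_\bullet| @>{\eta}>> \RR\widetilde{g}^*\LL\widetilde{g}_!|W_\bullet|
\end{CD}
\]
in which $\eta$ denotes the derived unit of the localised adjunction, $|\eta|$ is the realisation of the degreewise derived units $W_n \to \RR\widetilde{g}^*\LL\widetilde{g}_! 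W_n$, and the right-hand vertical map is the canonical assembly map. Commutativity is just the naturality of the derived unit together with the compatibility of $\LL\widetilde{g}_!$ with realisation. The top map $|\eta|$ is a local weak equivalence because it is the realisation of a degreewise local weak equivalence (our hypothesis) and geometric realisation, being a homotopy colimit, is homotopy invariant for the localised model structure on simplicial objects. Thus everything reduces to showing that the right-hand assembly map is an equivalence, i.e. that $\RR\widetilde{g}^*\LL\widetilde{g}_!$ commutes with realisation on $W_\bullet$.

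For the left adjoint this is automatic: since left Bousfield localisation leaves the cofibrations, hence the cofibrant replacements, unchanged, we have $\LL\widetilde{g}_! = \LL g_!$, and a left derived functor preserves homotopy colimits; hence $\LL\widetilde{g}_!|W_\bullet| \simeq |Z_\bullet|$ where $Z_\bullet := \LL\widetilde{g}_! W_\bullet$. It remains to commute $\RR\widetilde{g}^*$ past $|Z_\bullet|$. I would first take a degreewise local fibrant replacement $Z_\bullet \to Z'_\bullet$ in $\mathbf{E}_{loc}$; since realisation preserves degreewise local weak equivalences, $\RR\widetilde{g}^*|Z_\bullet| \simeq \RR\widetilde{g}^*|Z'_\bullet|$, and by hypothesis (b) the object $|Z'_\bullet|$ is local.

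The point where assumptions (a) and (b) interact is the following bridge between the localised and unlocalised derived right adjoints: on a local object $L$ one has $\RR\widetilde{g}^* L \simeq \RR g^* L$. Indeed, any ordinary fibrant replacement of a local object is again local (being weakly equivalent to a local object, and fibrant), hence it is local fibrant and computes both $\RR\widetilde{g}^*$ and $\RR g^*$, on each of which it agrees with $g^*$ itself. Applying this to the local objects $|Z'_\bullet|$ and to each (local fibrant) $Z'_n$, and then invoking assumption (a) for the unlocalised functor, I obtain
\[
\RR\widetilde{g}^*|Z'_\bullet| \simeq \RR g^*|Z'_\bullet| \simeq |\RR g^* Z'_\bullet| \simeq |\RR\widetilde{g}^* Z'_\bullet| \simeq |\RR\widetilde{g}^* Z_\bullet|.
\]
Chaining these equivalences with $\LL\widetilde{g}_!|W_\bullet|\simeq|Z_\bullet|$ shows that the assembly map is an equivalence, which is exactly what was needed.

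The main obstacle is precisely this last bridge. The hypotheses are stated asymmetrically: (a) controls the \emph{unlocalised} functor $\RR g^*$, whereas the conclusion concerns the \emph{localised} functor $\RR\widetilde{g}^*$. The heart of the argument is therefore to confine all manipulations of the right adjoint to local objects, where the two derived functors coincide, and it is assumption (b) that guarantees we remain among local objects after realising. A secondary subtlety worth checking carefully is that the degreewise derived unit genuinely assembles into a map of simplicial objects $W_\bullet \to \RR\widetilde{g}^*\LL\widetilde{g}_! W_\bullet$, which is handled by using functorial (co)fibrant replacements throughout.
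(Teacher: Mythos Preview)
Your proposal is correct and follows essentially the same approach as the paper: both set up the same square comparing $|\eta|$ with $\eta$ on $|W_\bullet|$, and both reduce the claim to showing that $\RR\widetilde{g}^*$ commutes with geometric realisation by replacing degreewise by local objects, invoking (b) to ensure the realisation stays local, and using the coincidence $\RR\widetilde{g}^* \simeq \RR g^*$ on local objects together with (a). Your write-up is in fact more explicit than the paper's (you spell out why $\LL\widetilde{g}_! = \LL g_!$ and why the two derived right adjoints agree on local objects), but the underlying argument is the same.
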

\begin{proof}
Let us first observe that $\RR \widetilde{g}^*$ also commutes with realisation. Indeed, given a simplicial object $E_\bullet$  in $\mathbf{E}$ and a degreewise weak equivalence $E_\bullet \to \widetilde{E}_{\bullet}$ into a degreewise local object, we have that
\[
| \RR \widetilde{g}^*(E_\bullet) | \simeq 
|\RR g^* (\widetilde{E}_\bullet) | \simeq
\RR g^* |(\widetilde{E}_\bullet) | \simeq
\RR \widetilde{g}^* | E_{\bullet} |
\]
where the second weak equivalence holds by assumption $(a)$ and the third by assumption $(b)$. 

Thus, in the square
\begin{equation*}
	\begin{tikzpicture}[descr/.style={fill=white}, baseline=(current bounding box.base)] ] 
	\matrix(m)[matrix of math nodes, row sep=2.5em, column sep=2.5em, 
	text height=1.5ex, text depth=0.25ex] 
	{
	\vert W_{\bullet} \vert & \RR \widetilde{g}^* \LL \widetilde{g}_! \vert W_{\bullet}\vert  \\
	\vert \RR \widetilde{g}^* \LL \widetilde{g}_! W_{\bullet} \vert & \RR \widetilde{g}^* \vert \LL \widetilde{g}_! W_{\bullet} \vert  \\
	}; 
	\path[->,font=\scriptsize] 
		(m-1-1) edge node [auto] {$$} (m-1-2)
		(m-2-1) edge node [auto] {$$} (m-2-2)
		(m-2-1) edge node [left] {$$} (m-1-1)
		(m-2-2) edge node [auto] {$$} (m-1-2);
	\end{tikzpicture} 
\end{equation*}
the lower map is a weak equivalence by the previous paragraph. The result now follows since the vertical maps are weak equivalences (the left-hand one is so by assumption).
\end{proof}

\subsection{The Reedy model structure}\label{sec:reedyref}
Now let $\RR$ be a (generalised) Reedy category in the sense above. Then the category $\RRSp$ carries another (proper, combinatorial) model structure with weak equivalences given objectwise, as in the projective model structure. The fibrations and cofibrations are defined explicitly in terms of latching and matching objects $deg(X)(r)$\footnote{$deg$ stands for degeneracies, reflecting the examples we work with.} and $X(\partial r)$ associated to a simplicial presheaf $X$ and an object $r$ in $\RR$,
\[
deg(X)(r) = \underset{r \minusarrow s}{\colim} X_s
\quad \textup{and} \quad 
X(\partial r) = \lim_{s \plusarrow r} X_s
\]
where the colimit ranges over the full subcategory $(r/\RR)^{-}$ of $r/\RR$ whose objects are \emph{properly negative} morphisms out of $r$, and the limit ranges over the full subcategory $(\RR/r)^{+}$ of properly positive morphisms into $\RR$.

\quad For later use, we observe that the category $(r/\RR)^{-}$ is obviously a subcategory of the larger full subcategory of $r/\RR$ whose objects are those morphisms $f : r \to s$ which factor through a properly negative map $g : r \minusarrow r^\prime$. Moreover, $(r/\RR)^{-}$ is cofinal in this larger subcategory, so we might as well have defined $deg(X)(r)$ as the colimit over this larger category. In particular, $deg(X)(r)$ is functorial in $r$. A similar remark applies to $(\RR/r)^+$ and the definition of $X(\partial r)$. With this functoriality in $r$, observe that the obvious maps
\[
deg(X)(r) \to X(r) \to X(\partial r)
\]
are natural in $r$.

\quad We can now define the fibrations and cofibrations of the Reedy model structure: a morphism $X \to Y$ is a (Reedy) cofibration if for each object $r$ in $\RR$, the map
\[
deg(Y)(r) \cup_{deg(X)(r)} X_r \to Y_r
\]
is a cofibration in the projective model structure on $Aut(r)\mhyphen\mathsf{spaces}$, the category of spaces with a (right) action of the group $Aut(r)$; this means that this map is a monomorphism of simplicial sets with a free $Aut(r)$-action on the complement of its image. And a morphism $Y \to X$ is a (Reedy) fibration if for each object $r$ in $\RR$,  the map
\[
Y_r \to X_r \times_{X(\partial r)} Y(\partial r)
\]
is a Kan fibration of simplicial sets.

\quad For an object $r$ in $\RR$, let us write $\partial \RR(-,r)$ for the subobject of the representable presheaf $\RR(-,r)$ given by those morphisms $s \to r$ which factor through \emph{some} properly positive morphism $r^\prime \plusarrow r$. Thus
\[
X(\partial r) = \map_{}(\partial \RR(-,r), X)
\]
With this notation, the generating (trivial) cofibrations of the Reedy model structure are the maps of the form
\[
\partial \RR(-,r) \otimes B \cup_{\partial \RR(-,r) \otimes A} \RR(-,r) \otimes A \to \RR(-,r) \otimes B
\]
where $A \hookrightarrow B$ is a generating (trivial) cofibration of simplicial sets.

\quad By definition, an object $X$ is Reedy fibrant if and only if, for each $r$, the map $X(r) \to X(\partial r)$ is a Kan fibration. More generally, let $V$ be a subobject of $\partial \RR(-,r) \subset \RR(-,r)$. That is, $V$ corresponds to a set of morphisms with target $r$ which factor through some properly positive morphism and are closed under precomposition with morphisms in $\RR$. Write
\[
X(\partial_V r) = \map_{}(V, X) = \lim_{t \to r \in V} X(t)
\]

\begin{lem}\label{lem:Vreedy}
If $X$ is Reedy fibrant, then for any such $V$ the map $X(\partial r) \to X(\partial_V r)$ is a Kan fibration.
\end{lem}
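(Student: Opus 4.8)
The plan is to exhibit the map $X(\partial r)\to X(\partial_V r)$ as the value of the contravariant mapping-space functor $\map(-,X)$ on the inclusion $\iota\colon V\hookrightarrow \partial\RR(-,r)$, and to deduce the conclusion from the simplicial structure of the Reedy model category. Recall that, with the tensoring and enrichment described above, the Reedy model structure on $\RRSp$ is a simplicial model category (part of the generalised Reedy formalism of \cite{Berger-Moerdijk2}). Consequently the pushout--product (SM7) axiom, applied to the fibration $X\to\ast$, shows that $\map(-,X)$ sends any Reedy cofibration $A\hookrightarrow B$ to a Kan fibration $\map(B,X)\to\map(A,X)$ whenever $X$ is Reedy fibrant. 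Since $X(\partial r)=\map(\partial\RR(-,r),X)$ and $X(\partial_V r)=\map(V,X)$ by definition, and $X$ is Reedy fibrant by hypothesis, the whole lemma reduces to the single assertion that $\iota$ is a Reedy cofibration.

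To check this I would unwind the definition of Reedy cofibration: for each object $s$ of $\RR$ the relative latching map
\[
deg(\partial\RR(-,r))(s)\cup_{deg(V)(s)} V_s \longrightarrow \partial\RR(-,r)_s
\]
must be a cofibration of $Aut(s)$-spaces, i.e.\ a monomorphism of discrete $Aut(s)$-sets acting freely on the complement of its image. That this map is a monomorphism, with image the set of those morphisms $s\to r$ in $\partial\RR(-,r)$ that are either degenerate or already lie in $V$, is routine bookkeeping for subpresheaves of the representable $\RR(-,r)$ (here one uses that $V$, being closed under precomposition, is a subpresheaf). The complement of the image is therefore the set of nondegenerate elements of $\partial\RR(-,r)_s$ not belonging to $V$; and these nondegenerate elements are precisely the properly positive morphisms $s\plusarrow r$, since an element of the representable is nondegenerate exactly when the negative part of its Reedy factorisation is an isomorphism.

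The crux is the freeness of the $Aut(s)$-action on this complement, and this is exactly where the Reedy axioms—rather than mere presheaf formalism—enter. The group $Aut(s)$ acts by precomposition; the complement is stable under this action because $V$ is a sieve and because precomposing a properly positive morphism with an isomorphism again yields a (nondegenerate) properly positive morphism. Freeness then says that $f\theta=f$ forces $\theta=\mathrm{id}$ for any properly positive $f\colon s\plusarrow r$ and $\theta\in Aut(s)$, which is precisely Reedy axiom (4). This short equivariant step is the one genuine obstacle: it is what upgrades the evident monomorphism $\iota$ to a \emph{normal} monomorphism, i.e.\ a Reedy cofibration, and with it in hand the lemma follows from the reduction of the first paragraph.
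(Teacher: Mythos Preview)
Your proof is correct, but it takes a genuinely different route from the paper's. The paper argues by induction on the degree $d(r)$: it exhausts $\partial\RR(-,r)$ by a sequence of subobjects $V=V_{-1}\subset V_0\subset V_1\subset\cdots$, each obtained from the previous one by adjoining a single properly positive morphism $s_n\plusarrow r$, and shows that each successive map $P_n\to P_{n-1}$ in the resulting tower of mapping spaces is a pullback of a map of the form $X(s_n)\to X(\partial_W s_n)$ for some sieve $W$ on $s_n$; since $d(s_n)<d(r)$, the induction hypothesis (together with Reedy fibrancy of $X$) makes this a Kan fibration. Your argument instead verifies in one stroke that the inclusion $V\hookrightarrow\partial\RR(-,r)$ is a Reedy cofibration, using axiom~(4) for the freeness of the $Aut(s)$-action on the nondegenerate complement, and then invokes the simplicial enrichment of the Reedy model structure from \cite{Berger-Moerdijk2} to conclude. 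Your approach is slicker and more conceptual, and makes the role of axiom~(4) transparent; the paper's filtration argument is more elementary and self-contained, avoiding any appeal to the SM7 axiom, and its explicit tower description is what the authors later reuse (in the same inductive style) for the relative statement of Remark~\ref{rem:RfibV}. Either method yields that relative version with essentially no extra work.
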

\begin{proof}
For simplicity, we assume that $\RR$ is countable as this is the case in all our examples.
The proof is by induction on the degree $d(r)$ of $r$. The case $d(r) = 0$ is clear. Suppose the statement is true for all $s$ with $d(s) < d(r)$ and all $V$. Let $\{s_n \plusarrow r\}_{n \geq 0}$ be a sequence of properly positive morphisms such that $s_n \to r$ does not factor through any $s_i \to r$ with $i < n$ but any properly positive $t \to r$ factors through \emph{some} $s_n \to r$.

Let
\[
P_n = \lim X(s)
\]
where the limit ranges over all $s \to r$ which are either in $V$ or factor through some $s_k \to r$ with $k \leq n$. This gives a tower
\[
P_{-1} \gets P_{0} \gets P_{1} \gets \dots 
\]
whose inverse limit is exactly the map $X(\partial r) \to X(\partial_V r)$ in the statement of the lemma. Therefore it is enough to prove that each $P_n \to P_{n-1}$ is a fibration. But $P_n \to P_{n-1}$ is the pullback of 
\[
X(s_n) \to \lim_{(t \plusarrow s_n) \in W} X(t)
\]
where the limit is taken over the set $W$ of morphisms $t \plusarrow s_n$ which either factor through some $s_k \to s_n$ with $k < n$ or are in $V$. This is a fibration by induction since $d(s_n) < d(r)$. This completes the proof.
\end{proof}

\begin{rem}\label{rem:RfibV}
In exactly the same way, one proves that if $Y \to X$ is a generating Reedy fibration then for any subobject $V$ of $\partial \RR(-,r)$ the maps
\[
Y(r) \to X(r) \times_{X(\partial_V r)} Y(\partial_V r) \quad \mbox{ and } \quad Y(\partial r) \to X(\partial r) \times_{X(\partial_V r)} Y(\partial_V r)
\]
are Kan fibrations.
\end{rem}

Next, let us take a closer look at cofibrations and cofibrant objects in the Reedy model structure. The examples below all have the property that every negative morphism in the Reedy category $\RR$ has a section, and that every diagram of the form $s \lminusarrow r \minusarrow t$ has an absolute pushout. This implies that for every simplicial presheaf $X$ and every object $r$ in $\RR$, the map $deg(X)(r) \to X(r)$ is automatically injective, and that for a morphism $X \to Y$ between presheaves, the map $deg Y(r) \cup_{deg X(r)} X(r) \to Y(r)$ is injective for each $r$ if and only if $X \to Y$ is itself (objectwise) injective. Thus, a map $X \to Y$ is a Reedy cofibration if and only if it is a monomorphism with the property that, for each $r \in \RR$, the group $Aut(r)$ acts freely on the complement of $deg(Y)(r) \cup_{deg(X)(r)} X(r) \to Y(r)$. We call such monomorphisms $X \to Y$ \emph{normal} monomorphisms, and call an object $X$ \emph{normal} if $\varnothing \to X$ is a normal monomorphism, i.e. if $Aut(r)$ acts freely on the complement of $deg(X)(r) \hookrightarrow X(r)$ for each $r$. For the record, we summarize this observation as follows.

\begin{lem}\label{lem:reedy-cof}
Let $\RR$ be a generalised Reedy category in which every negative morphism has a section and in which every diagram of the form $\cdot \gets \cdot \to \cdot$ of negative morphisms has an absolute pushout. Then the Reedy cofibrations are the normal monomorphisms, and the cofibrant objects are the normal objects.
\end{lem}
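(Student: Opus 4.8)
The plan is to reduce everything to the two injectivity assertions made in the paragraph preceding the statement, and to verify those by an abstract Eilenberg--Zilber argument. Since weak equivalences, fibrations and the latching objects $deg(X)(r)$ are all computed levelwise in the simplicial direction, I would first reduce to set-valued presheaves and argue in a fixed simplicial degree; the colimit defining $deg(X)(r)$ commutes with this restriction. Observe also that the statement about cofibrant objects is the special case $\varnothing \to X$ of the statement about cofibrations (the latching object of $\varnothing$ is empty), so it suffices to treat a general map $f : X \to Y$.

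The heart of the matter is an Eilenberg--Zilber lemma for $\RR$: every $x \in X_r$ can be written as $g^\ast y$ for some negative $g : r \minusarrow s$ and some \emph{non-degenerate} $y \in X_s$ (one not in the image of $h^\ast$ for any properly negative $h$), and this pair $(g,y)$ is unique up to a unique isomorphism. For existence I would choose $g$ with target of minimal degree among those with $x \in \mathrm{im}(g^\ast)$; minimality forces $y$ to be non-degenerate, and the existence of sections shows each $g^\ast$ is a split monomorphism (if $\sigma g = \mathrm{id}$ then $\sigma^\ast g^\ast = \mathrm{id}$). For uniqueness I would use the absolute-pushout hypothesis in the form: if $s \xleftarrow{g} r \xrightarrow{g'} s'$ (both negative) has absolute pushout $P$ with legs $h,h'$, then the presheaf $X$, viewed as a covariant functor $\RR \to \Sets^{\op}$ and hence obliged to preserve the absolute pushout, turns the square into a pullback $X(P) = X(s)\times_{X(r)} X(s')$. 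Two non-degenerate representatives with $g^\ast y = g'^\ast y'$ then determine $z \in X(P)$ with $h^\ast z = y$ and $h'^\ast z = y'$; since $h,h'$ are negative (pushouts of negatives being negative) and $y,y'$ are non-degenerate, axiom (3) forces $h,h'$ to be isomorphisms, which yields the required isomorphism $\theta : s \to s'$ with $\theta g = g'$ and $\theta^\ast y' = y$.

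With the Eilenberg--Zilber lemma in hand, the two injectivity facts follow. First, $deg(X)(r) \to X_r$ is injective: two colimit representatives mapping to the same element of $X_r$ have, after reduction to non-degenerate form, isomorphic non-degenerate data by uniqueness, hence are identified in the colimit. Second, for the relative latching map I would show that it is a monomorphism for every $r$ if and only if $f$ is an objectwise monomorphism. The forward direction is an induction on $d(r)$: at minimal degree the relative latching map is just $X_r \to Y_r$, and in the inductive step one shows $deg(X)(r) \to deg(Y)(r)$ is mono (using uniqueness in $Y$ and the inductive hypothesis) so that $X_r$ injects into the pushout, and hence into $Y_r$, monomorphisms being stable under pushout in a topos. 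For the converse, when $f$ is an objectwise mono I would first check that $f$ \emph{preserves non-degeneracy} --- if $f(x) = g^\ast w$ for a proper negative $g$ with section $\sigma$, then $w = f(\sigma^\ast x)$ and injectivity of $f$ forces $x$ to be degenerate --- and then use the decomposition $X_r = deg(X)(r) \sqcup N_r X$, $Y_r = deg(Y)(r) \sqcup N_r Y$ into degenerate and non-degenerate parts to identify the relative latching map with the manifestly injective map $deg(Y)(r) \sqcup N_r X \hookrightarrow deg(Y)(r) \sqcup N_r Y$.

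Finally I would unwind the definition of Reedy cofibration. By the equivalence just proved, the monomorphism clause (for all $r$) in the definition of a cofibration of $Aut(r)\mhyphen\mathsf{spaces}$ holds for the relative latching maps precisely when $f$ is an objectwise monomorphism, while the remaining clause --- that $Aut(r)$ acts freely on the complement of the image --- is literally the freeness condition defining a normal monomorphism, the complement being $N_r Y \setminus \mathrm{im}(N_r X)$. Hence the Reedy cofibrations are exactly the normal monomorphisms, and specialising to $\varnothing \to X$ gives that the cofibrant objects are the normal objects. I expect the one genuine obstacle to be the uniqueness half of the Eilenberg--Zilber lemma: getting the variance right so that absolute pushouts in $\RR$ are carried to pullbacks by the contravariant $X$, and confirming that the pushout legs $h,h'$ are negative. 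Everything afterwards is bookkeeping with the resulting coproduct decomposition.
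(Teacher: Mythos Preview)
Your proposal is correct and in fact supplies considerably more than the paper does: the paper states this lemma ``for the record'' as a summary of the paragraph preceding it, simply \emph{asserting} that the two hypotheses (sections for negative maps, absolute pushouts of negative spans) force $deg(X)(r)\to X(r)$ to be injective and the relative latching map to be injective precisely when $f$ is objectwise injective. No argument is given. Your Eilenberg--Zilber decomposition is exactly the standard way to justify those assertions (it is essentially the argument in Berger--Moerdijk's paper on generalised Reedy categories), and your reduction of the Reedy-cofibration condition to the normal-monomorphism condition via the splitting $Y_r = deg(Y)(r)\sqcup N_rY$ is the right bookkeeping.

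The one point you flag does need a small repair. It is not immediate from the axioms that the legs $h,h'$ of the absolute pushout are negative, and your appeal to axiom~(3) is slightly misdirected. The clean fix is: factor $h = h_+ h_-$ with $h_-\in\RR^-$, $h_+\in\RR^+$; then $y = h_-^*(h_+^* z)$ and non-degeneracy of $y$ forces $h_-$ to be an isomorphism, so $h$ is positive up to isomorphism, and likewise for $h'$. Now $hg = h'g'$ exhibits two $(\RR^-,\RR^+)$-factorisations of the same morphism $r\to P$, and it is axiom~(2), uniqueness of factorisation, that produces the isomorphism $\theta:s\to s'$ with $\theta g = g'$ and $h'\theta = h$, whence $\theta^* y' = y$. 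With this adjustment the uniqueness half of your Eilenberg--Zilber lemma goes through, and everything downstream is as you describe.
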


\begin{rem}
Remark \ref{rem:RfibV} implies, by taking $V = \varnothing$, that every Reedy fibration is an objectwise (projective) Kan fibration. Since the projective and Reedy model structures have the same weak equivalences, this means that the identity functors form a Quillen equivalence
\[
(\RRSp)_P \leftrightarrows (\RRSp)_R
\]
between these model structures ($P$ for projective, $R$ for Reedy and left adjoint goes from left to right).
\end{rem}

\section{Dendroidal spaces}\label{sec:dendroidalspaces}

The category $\Omega$ of trees is the category indexing \emph{dendroidal} sets and spaces. Its objects are non-empty finite rooted trees, possibly with leaves, as in 
\[
\begin{tikzpicture} 
[level distance=7mm, 
every node/.style={fill, circle, minimum size=.1cm, inner sep=1.2pt}, 
level 1/.style={sibling distance=15mm}, 
level 2/.style={sibling distance=10mm}, 
level 3/.style={sibling distance=5mm}]

\node (centretree)[style={color=white}] {} [grow'=up] 
child {node (r) {} 
	child{
	}
	child{ node {}
		child
		child{ node (centrev){} child }
		child{ node {} }
	}
};
\end{tikzpicture}
\]


Each such tree generates a coloured symmetric operad $\Omega(T)$ whose colours are the edges of the tree and whose operations are generated by the vertices. The morphisms in $\Omega$ from a tree $S$ to a tree $T$ are precisely the operad maps $\Omega(S) \to \Omega(T)$. Any such morphism is a composition of elementary morphisms: besides the isomorphisms between trees, these are degeneracies, internal and external faces, much as for $\Delta$, and illustrated in the following picture.

\[
\resizebox{250pt}{250pt}{
\begin{tikzpicture} 
[level distance=7mm, 
every node/.style={fill, circle, minimum size=.1cm, inner sep=1.2pt}, 
level 1/.style={sibling distance=15mm}, 
level 2/.style={sibling distance=10mm}, 
level 3/.style={sibling distance=5mm}]

\node (centretree)[style={color=white}] {} [grow'=up] 
child {node (r) {} 
	child{
	}
	child{ node {}
		child
		child{ node (centrev){} child }
		child{ node {} }
	}
};

\node(treeabove)[style={color=white}, above=4cm of centretree] {} [grow'=up] 
child {node (abover) {} 
	child
	child{ node {}
		child
		child
		child{ node {} }
	}
};

\node(lefttree)[style={color=white}, left=4cm of centretree] {} [grow'=up] 
child {node (leftr) {} 
	child{
	}
	child{ node {}
		child
		child{ }
		child{ node {} }
	}
};

\node(righttree)[style={color=white}, right=4cm of centretree] {} [grow'=up] 
child {node(rightu) {} 
	child{
	}
	child{ node (w){}
		child
	}
	child{ node {}
	}
};

\node(treebelow)[style={color=white}, below=4cm of centretree] {} [grow'=up] 
child {node {} 
	child
	child
	child{ node(belowv) {}
		child
	}
	child
};

\tikzstyle{every node}=[]

\draw[->] ($(treeabove) + (0,-5pt)$) -- node[right]{$\partial_v$} ($(centretree) + (0, 80pt)$);
\draw[->] ($(treebelow) + (0,65pt)$) -- node[right]{$\partial_e$} ($(centretree) + (0, -15pt)$);
\draw[<<-] ($(lefttree) + (35pt,30pt)$) -- node[above]{$\sigma_v$} ($(centretree) + (-35pt, 30pt)$);
\draw[->] ($(righttree) + (-35pt,30pt)$) -- node[above]{$\partial_r$} ($(centretree) + (35pt, 30pt)$);

\node at ($(r) + (-17:6pt)$) {$r$};
\node at ($(r) + (30:15pt)$) {$e$};
\node at ($(centrev) + (-17:6pt)$)  {$v$};

\node at ($(abover) + (-17:6pt)$) {$r$};
\node at ($(abover) + (30:15pt)$) {$e$};

\node at ($(belowv) + (-17:6pt)$) {$v$};

\node at ($(rightu) + (-55:10pt)$) {$e$};
\node at ($(w) + (-17:6pt)$) {$v$};

\end{tikzpicture} 
}
\]
In the picture, $\partial_v$ is an external (top) face, $\partial_r$ an external (root) face, $\partial_e$ an inner face and $\sigma_v$ a degeneracy map.

Among all objects of $\Omega$, some have a special status and deserve their own notation: $\eta$ is the tree consisting of a single edge; for a non-negative integer $n$, the $n$-corolla $C_n$ is the tree with a single vertex and $n+1$ edges.

\quad The category $\dS$ of \emph{dendroidal sets} is the category of contravariant functors from $\Omega$ into sets, while the category $\dSp$ is that of \emph{dendroidal spaces}, i.e. contravariant functors from $\Omega$ into simplicial sets. The action of a morphism $\alpha : S \to T$ on a dendroidal set or space $X$ is denoted $\alpha^* : X_T \to X_S$. 
The dendroidal set represented by a tree $T$ is denoted $\Omega[T]$. The category $\Omega$ and the associated categories of dendroidal sets and spaces are discussed in detail in \cite{Moerdijk-Weiss}, \cite{Cisinski-Moerdijk1} and \cite{Cisinski-Moerdijk2}.

The simplicial category $\Delta$ embeds in $\Omega$, where we regard $[n]$ as a linear tree with $n$ vertices. We denote this inclusion by $i$.

\medskip
In the definition below, and unless stated otherwise, we write $\RR \map$ for the homotopy-invariant (alias derived) mapping space in the category of dendroidal spaces with respect to levelwise weak equivalences.

\begin{defn}\label{defn:Segaldend}
A dendroidal space $X$ is called a \emph{Segal} dendroidal space if the map
\[
\RR \map(\Omega[T], X) \to \RR \map(Sc[T], X)
\]
is a weak equivalence of spaces for each tree $T$. Here $Sc[T]$ is the Segal core of $T$, i.e. the union of all the corollas of $T$. (This union is indexed over the vertices $v$ of $T$.)
\end{defn}

\begin{rem} These Segal cores are discussed in detail in \cite{Cisinski-Moerdijk2}. In particular, it follows from Proposition 2.5 there that $X$ is a Segal dendroidal space if and only if the map
\[
\RR \map(\Omega[T], X) \to \RR \map(\Lambda^e[T], X)
\]
is a weak equivalence of spaces for each tree $T$ and each \emph{inner} edge $e$ in $T$. Here $\Lambda^e[T] \subset \partial \Omega[T]$ is the union of all faces except the one given by contracting $e$.
\end{rem}

Let $E$ denote the simplicial set given as the nerve of the groupoid with two objects $x$ and $y$ and exactly two non-identity morphisms $x \to y$ and $y \to x$. Recall from \cite{Rezk} that a simplicial space is \emph{complete} if the evaluation at one of the points $\RR\map(E,X) \to \RR\map(\Delta[0], X)$ is a weak equivalence. The following definition is equivalent to the one given in \cite{Cisinski-Moerdijk2}.

\begin{defn}\label{defn:completedend}
A Segal dendroidal space $X$ is \emph{complete} if the underlying Segal space $i^* X$ is complete.
\end{defn}

\subsection{Homotopy theories} The tree category $\Omega$ is a generalised Reedy category: the degree function $d$ is defined by $d(T) = $ number of vertices in $T$, while $\Omega^+$ is generated by faces and isomorphisms and $\Omega^-$ by degeneracies and isomorphisms. Therefore, the category of dendroidal spaces has a Reedy model structure. A dendroidal space $X$ is Reedy fibrant if the map
\[
X_T = \map(\Omega[T], X) \to \map(\partial \Omega[T], X)
\]
is a Kan fibration for each tree $T$, where $\Omega[T]$ is the representable dendroidal set (viewed as a dendroidal space) and $\partial \Omega[T]$ is its boundary. It is Reedy cofibrant if $Aut(T)$ acts freely on $X(T) \backslash deg(X)(T)$, and this happens to be equivalent to acting freely on all of $X(T)$.

There is a diagram of model categories and left Quillen functors:
\begin{equation}\label{eq:omega}
	\begin{tikzpicture}[descr/.style={fill=white}, baseline=(current bounding box.base)] ] 
	\matrix(m)[matrix of math nodes, row sep=2.5em, column sep=2.5em, 
	text height=1.5ex, text depth=0.25ex] 
	{
	\dSp_{P} & \dSp_{PS} & \dSp_{PSC} \\
	\dSp_{R} & \dSp_{RS} & \dSp_{RSC} \\
	}; 
	\path[->,font=\scriptsize] 
		(m-1-1) edge node [auto] {$$} (m-1-2);
	\path[->,font=\scriptsize] 
		(m-2-1) edge node [auto] {$$} (m-2-2);
	\path[->,font=\scriptsize] 
		(m-1-1) edge node [auto] {$\simeq$} (m-2-1);
	\path[->,font=\scriptsize] 		
		(m-1-2) edge node [auto] {$\simeq$} (m-2-2);
	\path[->,font=\scriptsize] 
		(m-1-2) edge node [auto] {} (m-1-3);
	\path[->,font=\scriptsize] 		
		(m-2-2) edge node [auto] {} (m-2-3);
	\path[->,font=\scriptsize] 		
		(m-1-3) edge node [auto] {$\simeq$} (m-2-3);
	\end{tikzpicture} 
\end{equation}
where $P$ and $R$ stand for projective and Reedy, $S$ stands for Segal and $C$ for complete. The fibrant objects in $\dSp_{PS}$ are the Segal dendroidal spaces which are projectively fibrant, whereas the fibrant objects in $\dSp_{RS}$ are the Segal dendroidal spaces which are Reedy fibrant. Similarly for $PSC$ and $RSC$. From left to right, the model structures on each column are obtained via left Bousfield localisation.

\begin{rem}\label{rem:homotopical}
Definitions \ref{defn:Segaldend} and \ref{defn:completedend} are in agreement with \cite{Cisinski-Moerdijk2}.
Indeed, if $X$ is a \emph{Reedy fibrant} dendroidal space, then the maps in those definitions are Reedy fibrations, and all the derived mapping spaces are identified with their non-derived versions. This is the case because $Sc[T] \to \Omega[T]$, $\Lambda^e[T] \to \Omega[T]$ and $i_!(E) \to i_!(\Delta^0)$ are inclusions into normal objects and hence Reedy cofibrations.
\end{rem}

\begin{defn}
A map $f : X \to Y$ between Segal dendroidal spaces is a Dwyer-Kan equivalence if $i^*f$ is essentially surjective (as a map of Segal spaces) and $f$ is fully faithful in the sense that the square
\begin{equation}\label{eq:ff}
	\begin{tikzpicture}[descr/.style={fill=white}, baseline=(current bounding box.base)] ] 
	\matrix(m)[matrix of math nodes, row sep=2.5em, column sep=2.5em, 
	text height=1.5ex, text depth=0.25ex] 
	{
	X_{C_n} & X_{\eta}^{\times n+1} \\
	Y_{C_n} & Y_{\eta}^{\times n+1} \\
	}; 
	\path[->,font=\scriptsize] 
		(m-1-1) edge node [auto] {$$} (m-1-2);
	\path[->,font=\scriptsize] 
		(m-2-1) edge node [auto] {$$} (m-2-2);
	\path[->,font=\scriptsize] 
		(m-1-1) edge node [auto] {$$} (m-2-1);
	\path[->,font=\scriptsize] 		
		(m-1-2) edge node [auto] {$$} (m-2-2);
	\end{tikzpicture} 
\end{equation}
is homotopy cartesian, for each $n$. Here the horizontal maps are induced by the inclusion $\amalg_{n+1} \eta \to C_n$ which selects the $n$ leaves and the root of the corolla. 
\end{defn}

In the complete model structures $\dSp_{RSC}$ and $\dSp_{PSC}$, the weak equivalences between Segal dendroidal spaces (which are not necessarily complete as such) are precisely the Dwyer-Kan equivalences \cite{Rezk}. Furthermore, many fibrations in these model structures can be identified:
\begin{prop}\label{prop:fibcomp}
Suppose $B$ is a projective (or Reedy) fibrant Segal dendroidal space, but not necessarily complete. A map $f : X \to B$ is a projective (or Reedy) complete fibration if and only if it satisfies the following conditions:
\begin{enumerate}
\item[$(i)$] $X$ is a Segal dendroidal space and $f$ is a projective (or Reedy) fibration;
\item[$(ii)$] $i^* f : i^*X \to i^*B$ is a fibrewise complete Segal space, that is, the map
\[
\RR \map(E, i^*X) \to \RR \map(E,i^*B) \times^h_{\RR \map(\Delta[0], i^*B)} \RR \map(\Delta[0], i^*X)
\]
is a weak equivalence. In other words, the square
\begin{equation*}
	\begin{tikzpicture}[descr/.style={fill=white}, baseline=(current bounding box.base)] ] 
	\matrix(m)[matrix of math nodes, row sep=2.5em, column sep=2.5em, 
	text height=1.5ex, text depth=0.25ex] 
	{
	(i^*X)^{he}_1 & (i^*X)_0 \\
	(i^*B)^{he}_1 & (i^*B)_0 \\
	}; 
	\path[->,font=\scriptsize] 
		(m-1-1) edge node [auto] {$$} (m-1-2)
		(m-2-1) edge node [auto] {$$} (m-2-2)
		(m-1-1) edge node [left] {$$} (m-2-1)
		(m-1-2) edge node [auto] {$$} (m-2-2);
	\end{tikzpicture} 
\end{equation*}
is homotopy cartesian, where $(Z_1)^{he}$ denote the space of homotopy invertible morphisms of a Segal space $Z$.
\end{enumerate}
\end{prop}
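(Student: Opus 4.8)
The plan is to realise the complete localisation as a left Bousfield localisation at a single cofibration and then invoke the standard description of the fibrations of such a localisation. Throughout, write $\mathcal N$ for the Segal model structure $\dSp_{PS}$ (resp. $\dSp_{RS}$) and $\mathcal L$ for its complete localisation $\dSp_{PSC}$ (resp. $\dSp_{RSC}$); these are combinatorial simplicial model categories, and $\mathcal L$ is obtained from $\mathcal N$ by left Bousfield localisation at the cofibration $\iota \colon i_!\Delta[0]\to i_!E$ (compare Remark~\ref{rem:homotopical}, where this map is recorded as an inclusion into a normal object), an object being $\mathcal L$-local precisely when its underlying Segal space is complete.

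First I would dispose of condition $(i)$. Since $\mathcal L$ is a localisation of $\mathcal N$, every $\mathcal L$-fibration is an $\mathcal N$-fibration, hence in particular a projective (resp. Reedy) fibration; and as $f$ is then a fibration over the $\mathcal N$-fibrant object $B$, its source $X$ is $\mathcal N$-fibrant, i.e. a Segal dendroidal space. This yields $(i)$ from the left-hand side. Conversely, assuming $(i)$, the map $f$ is a projective (resp. Reedy) fibration between Segal dendroidal spaces with $B$ fibrant (so $X$ is projectively (resp. Reedy) fibrant as well), and for such a map being an $\mathcal N$-fibration is automatic: the Segal-locality conditions distinguishing $\mathcal N$ from the projective/Reedy structure are homotopy right orthogonality conditions that hold vacuously when source and target are already Segal. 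Thus in both directions the statement reduces to: for an $\mathcal N$-fibration $f\colon X\to B$ between $\mathcal N$-fibrant objects, $f$ is an $\mathcal L$-fibration if and only if $(ii)$ holds.

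To prove this reduced statement I would use that the generating trivial cofibrations of $\mathcal L$ may be taken to be those of $\mathcal N$ together with the pushout-products $\iota\,\hat\otimes\,(\partial\Delta[n]\to\Delta[n])$ for $n\ge 0$. Consequently $f$ is an $\mathcal L$-fibration exactly when it is an $\mathcal N$-fibration having the right lifting property against all of these. By the pushout-product (SM7) adjunction in the simplicial category $\mathcal N$, the latter lifting property is equivalent to the corner map
\[
\map(i_!E,X)\longrightarrow \map(i_!\Delta[0],X)\times_{\map(i_!\Delta[0],B)}\map(i_!E,B)
\]
being a trivial Kan fibration. Since $f$ is an $\mathcal N$-fibration and $\iota$ a cofibration, this corner map is in any case a Kan fibration (and its target is the strict pullback of the Kan fibration $\map(i_!E,B)\to\map(i_!\Delta[0],B)$, hence a homotopy pullback), so it is trivial precisely when it is a weak equivalence, i.e. precisely when the associated square of mapping spaces is homotopy cartesian. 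Finally, transposing along the adjunction $i_!\dashv i^*$ and using that $X$ and $B$ are fibrant (so these mapping spaces compute derived ones), together with the identifications $\RR\map(E,i^*Z)\simeq (i^*Z)^{\he}_1$ for a Segal space $i^*Z$ and $\RR\map(\Delta[0],i^*Z)=(i^*Z)_0$ \cite{Rezk}, this corner map becomes exactly the comparison map of condition $(ii)$. The argument is uniform in the projective and Reedy cases, the only change being which generating trivial cofibrations of $\mathcal N$ one starts from.

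The main point requiring care is the explicit description of the generating trivial cofibrations of the localisation $\mathcal L$ — equivalently, the fact that homotopy right orthogonality to the single map $\iota$ suffices to detect $\mathcal L$-fibrations among $\mathcal N$-fibrations between $\mathcal N$-fibrant objects — and the verification that all mapping spaces in play are homotopically meaningful, for which the fibrancy of $X$ and $B$ and the cofibrancy of $i_!E$ are precisely what is needed.
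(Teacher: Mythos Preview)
Your argument has a genuine gap at exactly the point you flag at the end. The assertion that the generating trivial cofibrations of $\mathcal L$ may be taken to be those of $\mathcal N$ together with the pushout-products $\iota\,\hat\otimes\,(\partial\Delta[n]\to\Delta[n])$ is not a general fact about left Bousfield localisations: the standard construction (e.g.\ Hirschhorn) produces a much larger generating set, and the naive set you describe typically detects only the fibrant \emph{objects}, not all fibrations. What \emph{is} standard is that an $\mathcal N$-fibration between $\mathcal L$-fibrant objects is an $\mathcal L$-fibration; but here $B$ is explicitly allowed to be non-complete, so that result does not apply. Your reformulation---``homotopy right orthogonality to the single map $\iota$ suffices to detect $\mathcal L$-fibrations among $\mathcal N$-fibrations between $\mathcal N$-fibrant objects''---is precisely the content of the proposition, so you have reduced the statement to itself rather than proved it.

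The paper avoids this circularity by a different method. For the nontrivial implication it factors $f$ in $\mathcal L$ as $X \xrightarrow{j} Y \xrightarrow{g} B$ with $j$ an $\mathcal L$-trivial cofibration and $g$ an $\mathcal L$-fibration, observes that $j$ is then a Dwyer-Kan equivalence between Segal dendroidal spaces, and uses condition $(ii)$ together with an external input---\cite[Proposition~B.8]{Boavida-Weiss}, a result specific to (fibrewise complete) Segal spaces---to upgrade $i^*j$ to a levelwise weak equivalence. Fully-faithfulness then promotes $j$ itself to a levelwise equivalence, and a general retract-style argument for localisations finishes the proof. The substantive content lies in that external result about Segal spaces; your outline contains no analogue of it.
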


\begin{rem}
As before, in the Reedy case, point $(ii)$ is equivalent to the statement that the non-derived map
\[
\map(E, i^*X) \to \map(E,i^*B) \times_{\map(\Delta[0], i^*B)} \map(\Delta[0], i^*X)
\]
is a trivial Reedy fibration. 
\end{rem}

\begin{proof}
We prove this for the Reedy model structure, the projective case is identical. 

One implication is immediate: if $f$ is a fibration in $\dSp_{RSC}$ then $(i)$ holds since $f$ is in particular a fibration in $\dSp_{RS}$, and $(ii)$ holds since the inclusion $i_!\Delta[0] \to i_!E$ is a trivial cofibration by definition of the complete model structure (and so $f$, being a fibration, has the right lifting property with respect to it).

\quad For the reverse implication, factor $f$ as a trivial cofibration $j$ followed by a fibration $g$ in $\dSp_{RSC}$
\[
X \overset{\sim}{\hookrightarrow} Y \to B \; .
\]
Since $g$ is a fibration in $\dSp_{RSC}$, it is also a fibration in $\dSp_{RS}$, and so it follows from the assumption that $B$ is a Segal dendroidal space that $Y$ is also a Segal dendroidal space. Weak equivalences between dendroidal Segal spaces in $\dSp_{RSC}$ are precisely the Dwyer-Kan equivalences, therefore $j$ is a Dwyer-Kan equivalence. The claim is that $j$ is in fact an objectwise weak equivalence. This is enough to conclude that $f$ is a fibration in $\dSp_{RSC}$ since in any left Bousfield localisation $L_S M$ of a model category $M$ at a set of maps $S$, if a map $f : X \to B$ is a fibration in $M$, $g : Y \to B$ a fibration in $L_S M$, and $j : X \to Y$  satisfying $f = gj$ is a weak equivalence in $M$, then $f$ is a fibration in $L_S M$.

We deduce the claim from the corresponding statement for Segal spaces. Namely, applying $i^*$ we obtain maps of Segal spaces 
\[
i^*X \to i^*Y \to i^*B
\]
where the composite $i^*f$ and the right-hand map $i^*g$ are fibrewise complete Segal spaces, and $i^*j$ is a Dwyer-Kan equivalence. It follows from \cite[Proposition B.8]{Boavida-Weiss} that $i^*j$ is an objectwise weak equivalence. In particular $j(\eta) : X_{\eta} \to Y_{\eta}$ is a weak equivalence of spaces. Together with the assumption that $j$ is fully faithful, this implies that $j$ is an objectwise weak equivalence.
\end{proof}

\section{Covariant fibrations and the Yoneda lemma for dendroidal spaces}\label{sec:Yoneda}

After some important definitions, we state and prove the main technical lemma of the paper which we call the Yoneda lemma.

\subsection{Covariant fibrations}
We begin with the definition of a covariant fibration (also called left fibration). In line with the previous section (cf. Remark \ref{rem:homotopical}), we first give a formulation in homotopical terms, i.e. only depending on the levelwise weak equivalences of dendroidal spaces and irrespective of a particular model structure.

Throughout this section, $B$ will denote a fixed Segal dendroidal space.

\begin{defn}
A map $X \to B$ of dendroidal spaces is a \emph{covariant fibration} if for each tree $T$ the diagram of simplicial sets
\begin{equation}\label{eq:covfib}
	\begin{tikzpicture}[descr/.style={fill=white}, baseline=(current bounding box.base)] ] 
	\matrix(m)[matrix of math nodes, row sep=2.5em, column sep=2.5em, 
	text height=1.5ex, text depth=0.25ex] 
	{
	X_{T} & \map({\lambda(T)}, X_\eta) \\
	B_{T} & \map({\lambda(T)}, B_\eta) \\
	}; 
	\path[->,font=\scriptsize] 
		(m-1-1) edge node [auto] {$$} (m-1-2);
	\path[->,font=\scriptsize] 
		(m-2-1) edge node [auto] {$$} (m-2-2);
	\path[->,font=\scriptsize] 
		(m-1-1) edge node [auto] {$$} (m-2-1);
	\path[->,font=\scriptsize] 		
		(m-1-2) edge node [auto] {$$} (m-2-2);
	\end{tikzpicture} 
\end{equation}
is homotopy cartesian, where $\lambda(T)$ denotes the set of leaves of $T$.
\end{defn}

Since $B$ is a Segal dendroidal space, we see by writing $B_T$ as a homotopy limit of corollas that it is enough to require that $X$ is a Segal dendroidal space and that the square (\ref{eq:covfib}) is homotopy cartesian when $T$ is a corolla.

\begin{rem}
A map $X \to B$ is a covariant fibration if and only if the map
\[
\RR \map(\Omega[T], X) \to \RR \map(\Lambda^e[T], X) \times^h_{\RR \map(\Lambda^e[T], B)} \RR \map(\Omega[T], B) 
\]
for every horn inclusion $\Lambda^e[T] \subset \Omega[T]$ with $e$ an inner edge or a top vertex, is a weak equivalence.
\end{rem}

We will add the adjective \emph{Reedy} or \emph{projective} whenever we require a covariant fibration to be a fibration in one of these two senses.

\subsection{The covariant model structure} Let $B$ be a projectively fibrant Segal dendroidal space. The projective \emph{covariant model structure} (alias left fibration model structure) on $\dSp$ is the left Bousfield localisation of $${(\dSp_{PS})}{/B} \; ,$$ the overcategory of the projective Segal model structure over $B$, at the maps selecting the leaves
\begin{equation}\label{eq:tophorn}
\coprod_{\lambda(T)} \eta \hookrightarrow \Omega[T] \xrightarrow{x} B
\end{equation}
for any tree $T$ and any $x \in B(T)$. Its fibrant objects are precisely the covariant projective fibrations.

These projective and Reedy model structures will be denoted 
$$({\dSp{/B}})_{P,cov} \quad \mbox{ and } \quad ({\dSp{/B}})_{R,cov} \; ,$$
respectively. 
As for any left Bousfield localisation, a map $f : X \to Y$ of dendroidal spaces over $B$ is a projective (or Reedy) \emph{covariant weak equivalence} if for every projective (or Reedy) covariant fibration $Z \to B$, the induced map
\[
f^* : \RR \map_B(Y, Z) \to \RR \map_B(X, Z) 
\]
is a weak equivalence.

In either setting, we have the following straightforward characterization of weak equivalences:
\begin{lem}\label{lem:covwe}
A map $f : X \to Y$ between of covariant fibrations over $B$ is a covariant weak equivalence if and only if the map on colours $X_\eta \to Y_\eta$ (over $B_\eta$) is a weak equivalence.
\end{lem}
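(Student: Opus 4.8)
The plan is to prove the stronger statement that, for covariant fibrations $X \to B$ and $Y \to B$, a map $f : X \to Y$ over $B$ is a covariant weak equivalence if and only if it is an objectwise weak equivalence; the criterion on colours then drops out of the defining homotopy-cartesian squares. I would treat the projective case, the Reedy case being identical.

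First I would invoke the standard behaviour of weak equivalences in a left Bousfield localisation: a local weak equivalence between two local (hence fibrant) objects is already a weak equivalence in the unlocalised model structure. The covariant structure $({\dSp{/B}})_{P,cov}$ is the localisation of $(\dSp_{PS}){/B}$ whose fibrant objects are exactly the covariant fibrations, so applying this principle to our fibrant objects $X$ and $Y$ shows that a covariant weak equivalence $f$ between them is a weak equivalence in $(\dSp_{PS}){/B}$, and thus in $\dSp_{PS}$ (the forgetful functor from a slice creates weak equivalences). Now $X$ and $Y$ are projectively fibrant Segal dendroidal spaces: they are Segal because the domain of a covariant fibration is Segal, and projectively fibrant because $X \to B$ and $Y \to B$ are projective Segal fibrations over the fibrant object $B$. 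Since $\dSp_{PS}$ is itself the Segal localisation of $\dSp_P$, the same principle applies a second time, and $f$ is a weak equivalence in $\dSp_P$, i.e. objectwise. The converse is formal: an objectwise weak equivalence is a weak equivalence in $\dSp_P$, hence in every localisation and slice thereof, so it is in particular a covariant weak equivalence. This reduces the lemma to showing that $f$ is objectwise a weak equivalence if and only if $X_\eta \to Y_\eta$ is.

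One implication of this reduction is trivial, by evaluating at $\eta$. For the other, I would feed the covariant fibration hypothesis into the defining square (\ref{eq:covfib}): for every tree $T$ the squares for $X$ and for $Y$ are homotopy cartesian, so $X_T$ and $Y_T$ are identified with the homotopy pullbacks
\[
B_T \times^h_{\map(\lambda(T), B_\eta)} \map(\lambda(T), X_\eta) \quad\text{and}\quad B_T \times^h_{\map(\lambda(T), B_\eta)} \map(\lambda(T), Y_\eta),
\]
compatibly with $f$. Since $\lambda(T)$ is a finite set, $\map(\lambda(T), -)$ is a finite product and hence preserves weak equivalences of simplicial sets, so a weak equivalence $X_\eta \to Y_\eta$ induces a weak equivalence $\map(\lambda(T), X_\eta) \to \map(\lambda(T), Y_\eta)$. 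The two cospans share the legs $B_T$ and $\map(\lambda(T), B_\eta)$ and differ only by this weak equivalence, so homotopy-invariance of homotopy pullbacks gives a weak equivalence $X_T \to Y_T$ for every $T$; that is, $f$ is objectwise a weak equivalence.

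The argument is largely formal, and the one place demanding attention — the main, though modest, obstacle — is the double use of the localisation principle in the second paragraph: one must verify that $X$ and $Y$ are genuinely fibrant at each stage, first as covariantly local objects and then as projectively fibrant Segal spaces. Both hold precisely because $X \to B$ and $Y \to B$ are covariant fibrations over the fibrant Segal space $B$. The concluding homotopy-pullback comparison is then a routine instance of the homotopy invariance of homotopy limits.
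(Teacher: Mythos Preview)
Your proof is correct and follows the same two-part strategy as the paper: first reduce ``covariant weak equivalence between covariant fibrations'' to ``levelwise weak equivalence'' via the standard localisation principle, then use the defining homotopy-cartesian squares to reduce ``levelwise'' to ``at $\eta$''. The paper's proof is essentially your argument compressed into two sentences; in particular it collapses your two-step descent (covariant $\to$ Segal $\to$ projective) into a single invocation of the localisation principle, which is legitimate since the covariant structure is a localisation of the projective one directly.
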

\begin{proof}
A map between fibrant objects in the localised setting (i.e. covariant fibrations over $B$) is a weak equivalence if and only if it is a levelwise weak equivalence. And from the definitions we have that a map between covariant fibrations over $B$ is a weak equivalence at every tree $T$ if and only if it is a weak equivalence at $\eta$, which proves the lemma.
\end{proof}

The projective and Reedy covariant model structures are in fact localisations of the corresponding \emph{complete} model structures by virtue of the following Proposition.

\begin{prop}\label{prop:covfib_is_compl}
If $f : X \to B$ is a covariant fibration, then it is a complete fibration. In particular, if $B$ is a complete dendroidal Segal space, then so is $X$. 
\end{prop}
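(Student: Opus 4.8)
The plan is to verify the two conditions characterising a (projective or Reedy) complete fibration in Proposition~\ref{prop:fibcomp}, reading the statement as the assertion that a projective (resp.\ Reedy) covariant fibration is a projective (resp.\ Reedy) complete fibration; in particular $f$ is taken to be a projective (resp.\ Reedy) fibration. Condition $(i)$ then costs almost nothing: $f$ is a fibration by hypothesis, and $X$ is a Segal dendroidal space because the squares~(\ref{eq:covfib}) are homotopy cartesian for every tree $T$ while $B$ is Segal---this is exactly the equivalence of formulations recorded just after the definition of covariant fibration. Everything thus reduces to condition $(ii)$, the fibrewise completeness of $i^{*}f$.

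The key observation is that condition $(ii)$ is the restriction to homotopy equivalences of the covariant square~(\ref{eq:covfib}) at the $1$-corolla $C_{1}$. Since $\lambda(C_{1})$ has a single element, that square reads
\[
\begin{CD}
(i^{*}X)_{1} @>{\ell}>> (i^{*}X)_{0} \\
@V{f}VV @VV{f}V \\
(i^{*}B)_{1} @>{\ell}>> (i^{*}B)_{0}
\end{CD}
\]
with $\ell$ evaluation at the leaf, and it is homotopy cartesian by definition of a covariant fibration. Now $(i^{*}B)^{he}_{1}\hookrightarrow(i^{*}B)_{1}$ is the inclusion of a union of path components, so pulling the displayed square back along it preserves homotopy cartesianness, the new top-left corner being the union of components of $(i^{*}X)_{1}$ lying over $(i^{*}B)^{he}_{1}$. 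I claim this corner is precisely $(i^{*}X)^{he}_{1}$; equivalently, that $f$ is \emph{conservative}: a $1$-simplex $g$ of $i^{*}X$ is homotopy invertible if and only if $f(g)$ is. Granting this, and using the symmetry of $E$ to match the endpoint map appearing in Proposition~\ref{prop:fibcomp}$(ii)$ with $\ell$, the square of $(ii)$ is identified with the pulled-back square and is therefore homotopy cartesian.

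To prove conservativity I would mimic the classical fact that left fibrations are conservative, now expressed through the transport supplied by the $C_{1}$-square, which exhibits $X_{C_{1}}$ as $B_{C_{1}}\times^{h}_{B_{\eta}}X_{\eta}$: a morphism of $i^{*}X$ is determined, up to homotopy, by its image in $i^{*}B$ together with a lift of the relevant endpoint, and morphisms of $i^{*}B$ can be transported to $i^{*}X$ once such an endpoint has been lifted, essentially uniquely. The implication ``$g$ invertible $\Rightarrow f(g)$ invertible'' is automatic, since $f$ induces a functor $\mathrm{Ho}(i^{*}X)\to\mathrm{Ho}(i^{*}B)$. For the converse, assuming $f(g)$ invertible, I would transport a homotopy inverse of $f(g)$ along the endpoint lift carried by $g$ to obtain a morphism $h$ of $i^{*}X$ with $f(hg)$ homotopic to an identity. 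A morphism lying over an identity and sharing an endpoint with a degeneracy is homotopic to that degeneracy, again by the essential uniqueness in the $C_{1}$-square, so $hg$ is homotopic to an identity and $g$ has a one-sided inverse; since $f(h)$ is invertible too, the same argument applied to $h$ together with an elementary computation in $\mathrm{Ho}(i^{*}X)$ upgrades this to genuine invertibility of $g$.

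Finally, the ``in particular'' clause falls out of $(ii)$: if $B$ is complete then $(i^{*}B)^{he}_{1}\to(i^{*}B)_{0}$ is a weak equivalence, and a homotopy cartesian square with this edge an equivalence forces $(i^{*}X)^{he}_{1}\to(i^{*}X)_{0}$ to be an equivalence, i.e.\ $i^{*}X$, and hence $X$, is complete. The step I expect to be the real obstacle is conservativity: transporting inverses and controlling morphisms that lie over identities must be carried out homotopy-coherently, and one must reconcile the leaf endpoint of the covariant square with the source/target conventions and with the chosen basepoint of $E$ in Proposition~\ref{prop:fibcomp}$(ii)$---it is the symmetry of $E$ that lets one avoid pinning these conventions down.
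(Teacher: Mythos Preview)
Your proposal is correct and follows the same architecture as the paper's proof: both verify the criterion of Proposition~\ref{prop:fibcomp} by passing to the simplicial restriction $i^{*}f$ and checking fibrewise completeness there. The difference is only in how the key step is handled. The paper observes that $i^{*}f$ is a covariant fibration of Segal spaces and then \emph{cites} \cite{Boavida} for the fact that such maps are fibrewise complete, whereas you unpack this by pulling back the $C_{1}$-square along $(i^{*}B)^{he}_{1}\hookrightarrow(i^{*}B)_{1}$ and reducing to conservativity, which you then prove by the standard transport argument for left fibrations. Your route is more self-contained; the paper's is shorter but relies on an external reference. Your conservativity sketch is the usual one and is sound, including the point about the symmetry of $E$ absorbing any mismatch between the leaf endpoint and the basepoint chosen in Proposition~\ref{prop:fibcomp}(ii).
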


\begin{proof}
If $f$ is a covariant fibration of dendroidal Segal spaces then $i^*f$ is a covariant fibration of Segal spaces. But covariant fibrations of Segal spaces are fibrewise complete by \cite{Boavida}. Therefore, using Proposition \ref{prop:fibcomp}, we conclude that $f$ is a fibration in the complete Segal dendroidal space model structure.
\end{proof}

\begin{cor}
If $X \to Y$ is a map of Segal dendroidal spaces over $B$ which is a Dwyer-Kan equivalence, then it is a covariant weak equivalence.
\end{cor}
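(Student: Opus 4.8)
The plan is to deduce this directly from the fact, established in Proposition \ref{prop:covfib_is_compl} and the remark immediately preceding this corollary, that the covariant model structure over $B$ is a left Bousfield localisation of the \emph{complete} Segal model structure over $B$. The general principle I would invoke is that a left Bousfield localisation leaves the cofibrations unchanged and only enlarges the class of weak equivalences; consequently every weak equivalence of the model structure being localised remains a weak equivalence after localisation. So it suffices to exhibit the given Dwyer-Kan equivalence as a weak equivalence in the complete Segal model structure over $B$.

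Concretely, I would argue in three short steps. First, by Proposition \ref{prop:covfib_is_compl} the covariant fibrations over $B$ are the fibrant objects of a localisation of the slice $(\dSp_{PSC})/B$ (respectively $(\dSp_{RSC})/B$ in the Reedy case); hence every weak equivalence of $(\dSp_{PSC})/B$ is a covariant weak equivalence. Second, weak equivalences in a slice category are created on underlying objects, so a map over $B$ is a weak equivalence in $(\dSp_{PSC})/B$ exactly when its underlying map $X \to Y$ is a weak equivalence in $\dSp_{PSC}$. Third, since $X$ and $Y$ are assumed to be Segal dendroidal spaces, I would invoke the identification recalled earlier in this section: between Segal dendroidal spaces, the weak equivalences of $\dSp_{PSC}$ are precisely the Dwyer-Kan equivalences.

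Putting these together, a Dwyer-Kan equivalence $X \to Y$ of Segal dendroidal spaces over $B$ is a weak equivalence in $(\dSp_{PSC})/B$, and therefore a covariant weak equivalence, which is exactly the assertion. I do not anticipate a genuine obstacle; the one point that must be handled with care is that the localisation (\ref{eq:tophorn}) is taken over the \emph{Segal} structure, so a priori it only inverts the leaf-inclusions, and it is precisely Proposition \ref{prop:covfib_is_compl} that guarantees the completeness weak equivalences are inverted as well. Without that proposition the argument would only show that levelwise weak equivalences are covariant weak equivalences, which is strictly weaker than what is claimed.
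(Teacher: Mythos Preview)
Your argument is correct and is precisely the one the paper intends: the corollary is stated without proof immediately after Proposition \ref{prop:covfib_is_compl}, and your three steps spell out exactly the implicit reasoning, namely that the covariant model structure is a further localisation of the complete Segal structure (so complete weak equivalences are covariant weak equivalences) combined with the identification of complete weak equivalences between Segal objects with Dwyer-Kan equivalences. Your closing remark about the role of Proposition \ref{prop:covfib_is_compl} is also on point.
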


\subsection{The stable model structure} 
When $B$ is the terminal object, a further localisation of $$\dSp_{R,cov} = {(\dSp{/*})}_{R,cov}$$ 
at the maps
\[
\coprod_{i = 1}^k \eta \hookrightarrow C_{k+1} 
\]
for $k \geq 1$, selecting the root and all leaves except one, defines the \emph{stable model structure} \cite{Basic-Nikolaus}. (It is in fact enough to localise at a single map given by $k = 2$.)

We denote this model category by $\dSp_{R,stable}$ and the corresponding projective version by $\dSp_{P,stable}$.

\medskip
In summary, the ladder (\ref{eq:omega}) of localisations and Quillen equivalences can be prolonged as
\begin{equation}\label{eq:plong}
	\begin{tikzpicture}[descr/.style={fill=white}, baseline=(current bounding box.base)] ] 
	\matrix(m)[matrix of math nodes, row sep=2.5em, column sep=2.5em, 
	text height=1.5ex, text depth=0.25ex] 
	{
	\dSp_{PSC} & \dSp_{P,cov} & \dSp_{P,stable} \\
	\dSp_{RSC} & \dSp_{R,cov} & \dSp_{R,stable} \\
	}; 
	\path[->,font=\scriptsize] 
		(m-1-1) edge node [auto] {$$} (m-1-2);
	\path[->,font=\scriptsize] 
		(m-2-1) edge node [auto] {$$} (m-2-2);
	\path[->,font=\scriptsize] 
		(m-1-1) edge node [auto] {$\simeq$} (m-2-1);
	\path[->,font=\scriptsize] 		
		(m-1-2) edge node [auto] {$\simeq$} (m-2-2);
	\path[->,font=\scriptsize] 
		(m-1-2) edge node [auto] {} (m-1-3);
	\path[->,font=\scriptsize] 		
		(m-2-2) edge node [auto] {} (m-2-3);
	\path[->,font=\scriptsize] 		
		(m-1-3) edge node [auto] {$\simeq$} (m-2-3);
	\end{tikzpicture} 
\end{equation}

\begin{rem}\label{rem:dsets-cov}
It is proved in \cite{Cisinski-Moerdijk2} that there are left Quillen equivalences in both directions between complete dendroidal Segal spaces and dendroidal sets with the operadic model structure,
\[
\dS_{oper} \xrightarrow{\sim} \dSp_{RSC} \xrightarrow{\sim} \dS_{oper}
\] 
Thus, the lower row in diagram (\ref{eq:plong}) is also equivalent to a row of localisations
\[
\dS_{oper} \to \dS_{cov} \to \dS_{stable}
\]
The analogous Quillen equivalent rows for the categorical structures are
\[
\sSp_{RSC} \to \sSp_{R,cov} = \sSp_{R,stable} 
\]
and
\[
\Sp_{cat} \to \Sp_{cov} = \Sp_{KQ} \;.
\]
\end{rem}

\subsection{The Yoneda lemma}
In order to state the main lemma, Lemma \ref{lem:yon} below, we need a few more definitions.

Given a tree $T$, let $C(T)$ be the groupoid whose objects are inclusions $T \hookrightarrow T^\sharp$ where the tree $T^{\sharp}$ is obtained from $T$ by attaching a corolla $C_n$ with $n \geq 0$ to each leaf of $T$. (In other words, the objects of $C(T)$ are in bijective correspondence with functions from $\lambda(T)$ to $\NN$. In particular, for any tree $T$ without leaves, $C(T)$ has exactly one object, namely the identity map $T \to T^{\sharp}$.) A morphism in $C(T)$ is an isomorphism under $T$. 

The assignment $T \mapsto C(T)$ is functorial, i.e. $C(-)$ is a dendroidal object in groupoids. To see this, let $\alpha$ be a morphism $S \to T$ in $\Omega$. Then each leaf $e$ of $S$ is mapped to an edge $\alpha(e)$ of $T$, thus by contracting all the inner edges in $T^\sharp$ above $\alpha(e)$ we obtain a tree $S^\sharp$ and a diagram
\begin{equation*}
	\begin{tikzpicture}[descr/.style={fill=white}, baseline=(current bounding box.base)] ] 
	\matrix(m)[matrix of math nodes, row sep=2.5em, column sep=2.5em, 
	text height=1.5ex, text depth=0.25ex] 
	{
	T & T^\sharp \\
	S & S^\sharp \\
	}; 
	\path[right hook->,font=\scriptsize] 
		(m-1-1) edge node [auto] {$i$} (m-1-2);
	\path[right hook->,font=\scriptsize] 
		(m-2-1) edge node [auto] {$\alpha^*i$} (m-2-2);
	\path[<-,font=\scriptsize] 
		(m-1-1) edge node [left] {$\alpha$} (m-2-1);
	\path[<-,font=\scriptsize] 		
		(m-1-2) edge node [auto] {$i_*\alpha$} (m-2-2);
	\end{tikzpicture} 
\end{equation*}
By construction, $S^\sharp$ is obtained from $S$ by gluing corollas on leaves, and the map $i_*\alpha$ defines an injection $\lambda(S^\sharp) \to \lambda(T^\sharp)$. We leave to the reader the verification that $\alpha \mapsto \alpha^*i$ indeed defines a map of groupoids $C(T) \to C(S)$ that is functorial in $\alpha$.

\begin{defn}\label{defn:underoperad} 
Let $X$ be a Segal dendroidal space, and $\sigma : U \to X_\eta$ a map of simplicial sets. We define $\sigma/X$ as the dendroidal space whose value at a tree $T$ is
\[
(\sigma/X)_T := \hocolim{(T \hookrightarrow T^\sharp) \in C(T)} Z_{T^\sharp} \; ,
\]
and $Z_{T^\sharp}$ is defined as the homotopy pullback
\begin{equation*}
	\begin{tikzpicture}[descr/.style={fil	=white}, baseline=(current bounding box.base)] ] 
	\matrix(m)[matrix of math nodes, row sep=2.5em, column sep=2.5em, 
	text height=1.5ex, text depth=0.25ex] 
	{
	Z_{T^\sharp} & X_{T^\sharp} \\
	\map(\lambda(T^\sharp), U) & \map(\lambda(T^\sharp), X_\eta) \\
	}; 
	\path[->,font=\scriptsize] 
		(m-1-1) edge node [auto] {$$} (m-1-2);
	\path[->,font=\scriptsize] 
		(m-2-1) edge node [auto] {$\sigma$} (m-2-2);
	\path[->,font=\scriptsize] 
		(m-1-1) edge node [auto] {$$} (m-2-1);
	\path[->,font=\scriptsize] 		
		(m-1-2) edge node [auto] {$$} (m-2-2);
	\end{tikzpicture} 
\end{equation*}
where the right-hand map is induced by the inclusion of the leaves in $T^\sharp$.
\end{defn}

For each object $T \hookrightarrow T^\sharp$ in $C(T)$, we obtain a projection map $X_{T^\sharp} \to X_{T}$, and these assemble into a map $\pi: \sigma/X \to X$ of dendroidal spaces. Note also that if $T$ is a tree without leaves, then the map $(\sigma/X)_T \to X_T$ is a weak equivalence.

\begin{rem}
When $X$ is a normal dendroidal set (i.e. a Reedy cofibrant dendroidal \emph{discrete} space), the value of $\sigma/X$ at a tree $T$ can be described as the set of equivalence classes of triples 
\[
(T \hookrightarrow T^\sharp, a , \xi)
\]
where $T \hookrightarrow T^\sharp$ is an object of $C_T$, $a \in X_{T^\sharp}$ and $\xi : \lambda(T^\sharp) \to U$ is a labelling of the leaves of $T^\sharp$ by elements of $U$, compatible with the dendrex $a$ in the sense that
\[
\sigma \circ \xi (\ell) = \ell^*(a) \in X_\eta
\]
where $\ell^* : X_T^\sharp \to X_\eta$ is the restriction along $\ell : \eta \to T^\sharp$. The equivalence relation on such triples $(i, a, \xi)$ is defined by identifying $$(i : T \hookrightarrow T^\sharp, a, \xi) \sim (j : T \hookrightarrow T^\prime, b, \phi)$$ if and only if there is an isomorphism $\tau : T^\sharp \to T^\prime$ under $T$ such that 
$\tau^* b = a$ and $\tau^* \phi = \xi$, where $\tau^* \phi$ is the composition $\xi \circ \lambda(\tau) : \lambda(T^\sharp) \to U$.
\end{rem}

\begin{lem} The map $\pi : \sigma/X \to X$ is a covariant fibration.
\end{lem}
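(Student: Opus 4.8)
The plan is to verify directly the defining condition of a covariant fibration: that for every tree $T$ the square comparing $(\sigma/X)_T$ and $X_T$ with their spaces of leaf-labellings is homotopy cartesian. Concretely, I want to show that the map
\[
(\sigma/X)_T \longrightarrow X_T \times^h_{\map(\lambda(T), X_\eta)} \map(\lambda(T), (\sigma/X)_\eta)
\]
induced by $\pi$ and the leaf restrictions is a weak equivalence. We may assume $X$ is Reedy fibrant, since everything in sight is homotopy invariant and $\sigma/X$ depends on $X$ only up to levelwise weak equivalence. Throughout I will use two standard facts about the homotopy theory of spaces: finite products commute with homotopy colimits, and homotopy pullbacks commute with homotopy colimits (universality of colimits in spaces). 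By the remark following the definition of covariant fibration one could reduce to corollas, but the computation below treats all trees $T$ uniformly, so I prefer to carry it out in general.

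First I would record the combinatorial and Segal-theoretic input. For any tree $T$, restriction to the corolla grafted onto a leaf $\ell$ defines a functor $C(T) \to C(\eta)$, and together these identify the groupoid $C(T)$ with the product $\prod_{\ell \in \lambda(T)} C(\eta)$: an object of $C(T)$ is a function $\lambda(T) \to \NN$, while an automorphism of $(T \hookrightarrow T^\sharp)$ under $T$ fixes each leaf and so decomposes as a product of permutations of the leaves of the individual grafted corollas. Moreover, since $X$ is Segal and $T^\sharp$ is obtained from $T$ by grafting the corollas $C_{n_\ell}$ onto its leaves, the Segal core of $T^\sharp$ is the union of that of $T$ with these corollas, glued along the leaf edges; hence
\[
X_{T^\sharp} \simeq X_T \times^h_{\map(\lambda(T), X_\eta)} \prod_{\ell \in \lambda(T)} X_{C_{n_\ell}} ,
\]
the maps being induced by the leaf inclusions of $T$ and the root inclusions of the $C_{n_\ell}$.

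The computation then proceeds as follows. Feeding this decomposition of $X_{T^\sharp}$ into the homotopy pullback defining $Z_{T^\sharp}$ and rearranging the resulting iterated homotopy pullback, one obtains
\[
Z_{T^\sharp} \simeq X_T \times^h_{\map(\lambda(T), X_\eta)} \prod_{\ell \in \lambda(T)} Z_{C_{n_\ell}} .
\]
Now take the homotopy colimit over $C(T)$. The factor $X_T$ and the base $\map(\lambda(T), X_\eta)$ are constant in $C(T)$, so by universality the homotopy pullback passes outside the homotopy colimit, giving
\[
(\sigma/X)_T = \hocolim{C(T)} Z_{T^\sharp} \simeq X_T \times^h_{\map(\lambda(T), X_\eta)} \hocolim{C(T)} \prod_{\ell} Z_{C_{n_\ell}} .
\]
Using the identification $C(T) \cong \prod_{\ell} C(\eta)$ together with the commutation of finite products and homotopy colimits,
\[
\hocolim{C(T)} \prod_{\ell} Z_{C_{n_\ell}} \simeq \prod_{\ell \in \lambda(T)} \hocolim{C(\eta)} Z_{C_n} = \map(\lambda(T), (\sigma/X)_\eta) ,
\]
since $(\sigma/X)_\eta = \hocolim{C(\eta)} Z_{C_n}$ by definition. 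Combining the last two displays yields exactly the desired homotopy cartesian square.

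The one point requiring care, and the main obstacle, is naturality: one must check that this chain of equivalences is compatible with the structure maps, i.e. that the projection onto $X_T$ is induced by $\pi$ and that the projection onto the $\ell$-th factor $(\sigma/X)_\eta$ is the leaf restriction $\ell^*$. This reduces to unwinding the functoriality of $C(-)$: the leaf inclusion $\ell : \eta \to T$ sends $T^\sharp$ to the corolla grafted at $\ell$, so $\ell^* : (\sigma/X)_T \to (\sigma/X)_\eta$ is induced by the projection $C(T) \to C(\eta)$ and the map $Z_{T^\sharp} \to Z_{C_{n_\ell}}$, matching the factor projections above; similarly $\pi_T$ is induced by $Z_{T^\sharp} \to X_{T^\sharp} \to X_T$, which is the first projection after the decomposition. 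Granting this compatibility, the displayed equivalence \emph{is} the comparison map, so the square is homotopy cartesian for every $T$ and $\pi$ is a covariant fibration.
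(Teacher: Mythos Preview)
Your proof is correct and follows essentially the same approach as the paper: both use the Segal decomposition of $X_{T^\sharp}$ as a homotopy pullback of $X_T$ and the product of corolla-spaces, together with the splitting $C(T) \cong \prod_{\ell \in \lambda(T)} C(\eta)$ (the paper writes this as $\Sigma^{\lambda(T)}$) and the universality of colimits in spaces. The only cosmetic difference is that the paper argues pointwise, computing the homotopy fibre of $(\sigma/X)_T \to X_T$ over a fixed $x$ and identifying it with the corresponding fibre on the right, whereas you work globally and identify $(\sigma/X)_T$ directly with the homotopy pullback; your explicit discussion of naturality at the end addresses a point the paper leaves implicit (it even remarks that its simplification comes ``at the cost of losing functoriality in $T$'').
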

\begin{proof}
The goal is to show that the square
\begin{equation}\label{eq:sqcov}
	\begin{tikzpicture}[descr/.style={fill=white}, baseline=(current bounding box.base)] ] 
	\matrix(m)[matrix of math nodes, row sep=2.5em, column sep=2.5em, 
	text height=1.5ex, text depth=0.25ex] 
	{
	(\sigma/X)_T & \map(\lambda(T), (\sigma/X)_{\eta}) \\
	X_T & \map(\lambda(T), X_{\eta}) \\
	}; 
	\path[->,font=\scriptsize] 
		(m-1-1) edge node [auto] {$$} (m-1-2);
	\path[->,font=\scriptsize] 
		(m-2-1) edge node [auto] {$$} (m-2-2);
	\path[->,font=\scriptsize] 
		(m-1-1) edge node [auto] {$$} (m-2-1);
	\path[->,font=\scriptsize] 		
		(m-1-2) edge node [auto] {$$} (m-2-2);
	\end{tikzpicture} 
\end{equation}
is homotopy cartesian. Let $x \in X_T$. Since homotopy colimits of simplicial sets are stable under homotopy base change, the homotopy fibre of the left-hand map is identified with 
\begin{equation}\label{eq:undercfib}
\hocolim{(T \hookrightarrow T^\sharp) \in C(T)} \textup{hofibre}_x ( Z_{T^\sharp} \to X_T)
\end{equation}
where the map on the display is the composite of the projection $Z_{T^\sharp} \to X_{T^{\sharp}}$ with the restriction $X_{T^{\sharp}} \to X_T$.

Now, any object $T \hookrightarrow T^\sharp$ in $C(T)$ fits into a square of inclusions
\begin{equation*}
	\begin{tikzpicture}[descr/.style={fill=white}, baseline=(current bounding box.base)] ] 
	\matrix(m)[matrix of math nodes, row sep=2.5em, column sep=2.5em, 
	text height=1.5ex, text depth=0.25ex] 
	{
	\coprod_{\ell \in \lambda(T)} \Omega[\eta] & \coprod_{\ell \in \lambda(T)} \Omega[C_{n_\ell}] \\
	\Omega[T] & \Omega[T^\sharp] \\
	}; 
	\path[->,font=\scriptsize] 
		(m-1-1) edge node [auto] {$$} (m-1-2);
	\path[->,font=\scriptsize] 
		(m-2-1) edge node [auto] {$$} (m-2-2);
	\path[->,font=\scriptsize] 
		(m-1-1) edge node [auto] {$$} (m-2-1);
	\path[->,font=\scriptsize] 		
		(m-1-2) edge node [auto] {$$} (m-2-2);
	\end{tikzpicture} 
\end{equation*}
where $n_\ell$ is the set of leaves of the corolla attached to the $\ell^{th}$ leaf of $T$. By the assumption that $X$ is a Segal dendroidal space, the induced square
\begin{equation*}
	\begin{tikzpicture}[descr/.style={fill=white}, baseline=(current bounding box.base)] ] 
	\matrix(m)[matrix of math nodes, row sep=2.5em, column sep=2.5em, 
	text height=1.5ex, text depth=0.25ex] 
	{
	X_{T^{\sharp}} & X_T \\
	\prod_{\ell \in \lambda(T)} X_{C_{n_\ell}}  & \prod_{\ell \in \lambda(T)} X_{\eta} \\
	}; 
	\path[->,font=\scriptsize] 
		(m-1-1) edge node [auto] {$$} (m-1-2)
		(m-2-1) edge node [auto] {$$} (m-2-2)
		(m-1-1) edge node [auto] {$$} (m-2-1)
		(m-1-2) edge node [auto] {$$} (m-2-2);
	\end{tikzpicture} 
\end{equation*}
is homotopy cartesian. We can use this simple observation to  simplify the expression (\ref{eq:undercfib}), at the cost of losing functoriality in $T$. Recall the groupoid $\Sigma$ of finite sets and bijections. Clearly, the groupoid $C(T)$ breaks up as $\lambda(T)$-many copies of $\Sigma$. Given $\ell \in \lambda(T)$, let $x_\ell \in X_{\eta}$ be the image of $x$ by the map $X_{T} \to X_\eta$ which selects the $\ell^{th}$ leaf. Then the space (\ref{eq:undercfib}) is weakly equivalent to 
\[
\hocolim{(n_1, \dots, n_\ell) \in \Sigma \times \dots \times \Sigma} \; \textup{hofibre}_{(x_1, \dots, x_\ell)}(Z_{C_{n_1}} \times \dots \times Z_{C_{n_\ell}} \xrightarrow{\text{root}} X_{\eta} \times \dots \times X_{\eta})
\]
and this is identified with the homotopy fibre of the right-hand map in square (\ref{eq:sqcov}). Thus, the square (\ref{eq:sqcov}) is homotopy cartesian.
\end{proof}

As before, let $\sigma : U \to X_\eta$ be a map of simplicial sets. We say that $\sigma$ (or, by abuse, $U$) is \emph{initial} if the map $\pi : \sigma/X \to X$ is a (levelwise) weak equivalence. (To assert this, since $\pi$ is a covariant fibration, it suffices to show that $\pi$ is a weak equivalence on colours.)

For example, any map $\sigma : U \to X_\eta$ has a canonical lift $s \sigma : U \to (\sigma/X)_\eta$ along $\pi : (\sigma/X)_\eta \to X_\eta$ which assigns to $u \in U$ the element of $(\sigma/X)_\eta$ given by the unary corolla $C_1$ and the degenerate element of $X_{C_1}$ given by $\sigma(u)$. Then the map $s\sigma/(\sigma/X) \to \sigma/X$ is an isomorphism at $\eta$. Therefore $s\sigma$ is initial.

\begin{lem}\label{lem:yon}
Suppose $\pi : X \to B$ is map between Segal dendroidal spaces and $\sigma : U \to X_{\eta}$ a map which is initial in $X$. Then the map
\[
\Omega[\eta] \otimes U \to X
\]
adjoint to $\sigma$, is a weak equivalence in the covariant model structure over $B$.
\end{lem}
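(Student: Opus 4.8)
The plan is to factor the adjoint map through the slice $\sigma/X$, reduce via two-out-of-three to a single covariant weak equivalence, and then recognise that remaining map as the representability (Yoneda) statement tested against covariant fibrations.

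First I would factor the map $\Omega[\eta]\otimes U\to X$ adjoint to $\sigma$ through the slice. The canonical section $s\sigma:U\to(\sigma/X)_\eta$ described just above is adjoint to a map $j:\Omega[\eta]\otimes U\to\sigma/X$, and since $s\sigma$ is a lift of $\sigma$ along the projection $\pi:\sigma/X\to X$, the composite $\Omega[\eta]\otimes U\xrightarrow{j}\sigma/X\xrightarrow{\pi}X$ is precisely the map adjoint to $\sigma$. By hypothesis $\sigma$ is initial, so $\pi$ is a levelwise weak equivalence; since the covariant model structure over $B$ is a localisation of the Segal model structure over $B$ (itself a localisation of the levelwise one), every levelwise weak equivalence is a covariant one, so $\pi$ is a covariant weak equivalence over $B$. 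By two-out-of-three it then suffices to prove that $j$ is a covariant weak equivalence over $B$.

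To prove this I would test $j$ against covariant fibrations. Recall from the preceding lemma that $\pi:\sigma/X\to X$ is a covariant fibration, so $\sigma/X$ is already fibrant over $X$. A map is a covariant weak equivalence exactly when it induces a weak equivalence on derived mapping spaces into every covariant fibration $Z\to B$, so it is enough to show that
\[
j^*:\ \RR\map_B(\sigma/X, Z)\ \longrightarrow\ \RR\map_B(\Omega[\eta]\otimes U, Z)
\]
is a weak equivalence for every such $Z$. The target is immediate: since $\Omega[\eta]\otimes U$ is freely generated by its colours $U$, the tensor--hom adjunction identifies $\RR\map_B(\Omega[\eta]\otimes U,Z)$ with the space of maps $U\to Z_\eta$ lying over the given map $U\to B_\eta$. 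The content of the lemma is the Yoneda-type assertion that restriction along $j$ identifies $\RR\map_B(\sigma/X,Z)$ with this same space of $U$-labelled colours of $Z$. For the source I would feed in the description of $\sigma/X$ from Definition \ref{defn:underoperad} as a homotopy colimit over the groupoids $C(T)$ of the homotopy pullbacks $Z_{T^\sharp}$; mapping into $Z$ converts this into a homotopy limit, and I would then use the Segal condition to reduce arbitrary trees to corollas, the splitting of $C(T)$ into copies of $\Sigma$ (exactly as in the proof that $\pi$ is a covariant fibration, cf.\ square \eqref{eq:sqcov}), and the defining homotopy-cartesian squares of the covariant fibration $Z$.

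I expect the main obstacle to be the resulting collapse of this homotopy limit onto the single term indexed by the canonical section $s\sigma$ — that is, the homotopy-initiality of $s\sigma$. Concretely one must show that, after mapping into a covariant fibration $Z$, the contributions of the non-unary corollas (the terms with more than one leaf in the $\Sigma$-indexed diagram) are absorbed, leaving only the identity term and hence exactly the space of maps $U\to Z_\eta$ over $B_\eta$, compatibly with $j^*$. This is the step where both the covariant-fibration condition on $Z$ (its homotopy-cartesian squares relating $Z_T$ to $Z_\eta$) and the Segal condition on $X$ are indispensable, and it is the dendroidal analogue of the classical fact that global sections of a left fibration are computed by evaluation at a homotopy-initial object. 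An equivalent, more combinatorial route --- realising $\sigma/X$ as a cell complex built from $\Omega[\eta]\otimes U$ by attaching the leaf-inclusions \eqref{eq:tophorn}, which are generating covariant trivial cofibrations, and invoking closure of trivial cofibrations under pushout-product, pushout and transfinite composition --- meets the same core difficulty, now in the guise of matching the homotopy-colimit model of $\sigma/X$ with a strict, suitably equivariant (normal) cell decomposition.
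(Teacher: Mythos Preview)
Your factorisation $\Omega[\eta]\otimes U \xrightarrow{j} \sigma/X \xrightarrow{\pi} X$ and the two-out-of-three reduction are formally correct, but they do not advance the argument. Since $\sigma$ is initial, $\pi$ is a levelwise weak equivalence, so $\RR\map_B(\sigma/X,Z)\simeq\RR\map_B(X,Z)$ for every $Z$; the statement you must prove for $j$ is therefore \emph{equivalent} to the original statement, not a simplification of it. (Indeed, in the paper Corollary~\ref{cor:yon} --- which is exactly the assertion that $j$ is a covariant weak equivalence --- is deduced \emph{from} this lemma, not the other way round.) Your hope is that the explicit formula for $(\sigma/X)_T$ as a homotopy colimit over $C(T)$ gives a handle on $\RR\map_B(\sigma/X,Z)$, but that hocolim is taken tree by tree, not in the category of dendroidal spaces, so it does not convert into a holim upon mapping out. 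The ``collapse onto the $s\sigma$ term'' you flag as the main obstacle is the entire content of the lemma, and neither of your two sketched routes provides a mechanism for it.

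The paper's proof is structurally different and supplies exactly the missing idea: it applies the slice construction not only to the source $(X,\sigma)$ but simultaneously to the \emph{target} $(Y,\beta)$ and to $(B,\pi\sigma)$, producing a natural map of pointed mapping spaces
\[
\RR\map\bigl((X,\sigma),(Y,\beta)\bigr)\;\longrightarrow\;\RR\map\bigl((\sigma/X,s\sigma),(\beta/Y,s\beta)\bigr).
\]
These fit into a commutative square over the analogous map with $B$ in place of $Y$. The right-hand column of that square is an equivalence by Lemma~\ref{lem:yonES} (this is where the covariant-fibration hypothesis on $Y\to B$ enters), and the horizontal maps are shown to be equivalences by exhibiting homotopy left and right inverses: post-composing with the projection $\beta/Y\to Y$ recovers pre-composition by $\pi_X:\sigma/X\to X$, an equivalence since $\sigma$ is initial; iterating the slice once more and using that $s\sigma$ is initial gives the inverse on the other side. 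The key move you are missing is this \emph{functoriality of the slice in maps}, which converts the initiality hypothesis directly into a two-sided inverse on pointed mapping spaces rather than into a cell-by-cell or term-by-term computation.
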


Before we go into the proof, we isolate an easy result which will be used there.
\begin{lem}\label{lem:yonES}
Suppose $v : Y \to B$ is a covariant fibration and $\beta : U \to Y_\eta$ a map of spaces. Then the induced map $\beta/Y \to v \beta/B$ is a weak equivalence.
\end{lem}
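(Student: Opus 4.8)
The plan is to prove the statement levelwise: for every tree $T$, I will show that the induced map $(\beta/Y)_T \to (v\beta/B)_T$ is a weak equivalence of simplicial sets. (Note that, being the domain of a covariant fibration over the Segal dendroidal space $B$, the dendroidal space $Y$ is itself Segal, so that $\beta/Y$ is defined.) By Definition \ref{defn:underoperad}, both sides are homotopy colimits over the groupoid $C(T)$, namely of the diagrams $(T \hookrightarrow T^\sharp) \mapsto Z^Y_{T^\sharp}$ and $(T \hookrightarrow T^\sharp) \mapsto Z^B_{T^\sharp}$, and the comparison map is induced by $v$ on each term. Since an objectwise weak equivalence of diagrams over a fixed indexing category induces a weak equivalence on homotopy colimits, it suffices to show that $Z^Y_{T^\sharp} \to Z^B_{T^\sharp}$ is a weak equivalence for each object $T \hookrightarrow T^\sharp$ of $C(T)$.

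First I would unwind this map. By construction, $Z^Y_{T^\sharp}$ and $Z^B_{T^\sharp}$ are the homotopy pullbacks of the cospans
\[
Y_{T^\sharp} \to \map(\lambda(T^\sharp), Y_\eta) \xleftarrow{\map(\lambda(T^\sharp), \beta)} \map(\lambda(T^\sharp), U)
\]
and
\[
B_{T^\sharp} \to \map(\lambda(T^\sharp), B_\eta) \xleftarrow{\map(\lambda(T^\sharp), v\beta)} \map(\lambda(T^\sharp), U) \, .
\]
Because $v\beta = v \circ \beta$, the maps $v : Y_{T^\sharp} \to B_{T^\sharp}$ and $\map(\lambda(T^\sharp), v)$ assemble into a map of cospans which is the identity on the shared vertex $\map(\lambda(T^\sharp), U)$, and $Z^Y_{T^\sharp} \to Z^B_{T^\sharp}$ is the resulting map of homotopy pullbacks.

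The heart of the argument is the observation that the square with vertices $Y_{T^\sharp}$, $\map(\lambda(T^\sharp), Y_\eta)$, $B_{T^\sharp}$, $\map(\lambda(T^\sharp), B_\eta)$ is homotopy cartesian --- but this is exactly the defining square (\ref{eq:covfib}) of a covariant fibration evaluated at the tree $T^\sharp$, which holds by hypothesis on $v$. Granting this, I would finish by the pasting calculus for homotopy pullbacks: the homotopy cartesianness lets me replace $Y_{T^\sharp}$ by $B_{T^\sharp} \times^h_{\map(\lambda(T^\sharp), B_\eta)} \map(\lambda(T^\sharp), Y_\eta)$, so that $Z^Y_{T^\sharp}$ becomes an iterated homotopy pullback; pasting the two squares along $\map(\lambda(T^\sharp), Y_\eta)$ then identifies it, compatibly with the comparison map, with $B_{T^\sharp} \times^h_{\map(\lambda(T^\sharp), B_\eta)} \map(\lambda(T^\sharp), U) = Z^B_{T^\sharp}$. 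Hence $Z^Y_{T^\sharp} \to Z^B_{T^\sharp}$ is a weak equivalence for every $T^\sharp$, and taking homotopy colimits over $C(T)$ yields the lemma.

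I do not anticipate a genuine obstacle: once the comparison is recognised as a map of homotopy pullback squares one of whose faces is homotopy cartesian by the covariant fibration hypothesis, the conclusion is pure pasting calculus together with homotopy invariance of homotopy colimits. The only point requiring mild care is the bookkeeping in $T^\sharp$ and the verification that the chain of equivalences is natural enough to descend to the homotopy colimit over $C(T)$.
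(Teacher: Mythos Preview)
Your proof is correct and follows essentially the same approach as the paper: the paper's one-line argument (``the map of hocolim-diagrams defining $\beta/Y$ and $v\beta/B$ is a levelwise weak equivalence'') is precisely the statement that each $Z^Y_{T^\sharp} \to Z^B_{T^\sharp}$ is a weak equivalence, which you have unpacked carefully via the covariant fibration square~(\ref{eq:covfib}) and pasting of homotopy pullbacks. The paper leaves this verification implicit, so your version simply spells out the details.
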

\begin{proof}
Since $\pi$ is a covariant fibration, the map of hocolim-diagrams defining $\beta/Y$ and $\pi\beta/B$ is a levelwise weak equivalence.
\end{proof}

\begin{proof}[Proof of Lemma \ref{lem:yon}] We need to show that for every covariant fibration $v : Y \to B$ the induced map
\begin{equation}\label{eq:yon}
\RR \map_B(X, Y) \to \RR \map_B(\Omega[\eta] \otimes U, Y)  
\end{equation}
is a weak equivalence, where $\RR \map_{B}(-,-)$ denotes the derived mapping space in the projective model category of dendroidal spaces over $B$. We may assume that $v$ is a levelwise fibration between levelwise fibrant dendroidal spaces.

The derived mapping space $\RR \map_B(X, Y)$ is identified with the homotopy pullback of the diagram
\[
* \xrightarrow{\pi} \RR \map(X,B) \xleftarrow{v \circ} \RR \map(X,Y)
\]
and, similarly, $\RR \map_B(\Omega[\eta] \otimes U, Y)$ is identified with the (homotopy) pullback of
\[
* \xrightarrow{\pi \sigma} \map(U, B_{\eta}) \xleftarrow{v \circ} \map(U, Y_{\eta}) \; .
\]
Now pick a basepoint $\beta \in \map(U,Y_{\eta})$ such that $v\beta = \pi \sigma$ in $\map(U, B_\eta)$. We use based-maps notation and write $\RR \map((X,\sigma), (Y,\beta))$ for the homotopy fibre of the composition
\[
\RR \map(X,Y) \to \RR \map(X_{\eta}, Y_{\eta}) \xrightarrow{\sigma^*} \RR \map(U,Y_{\eta})
\]
over $\beta$, and similarly for $\RR \map((X,\sigma), (B, \pi\sigma))$. By commuting homotopy limits, the homotopy fibre of (\ref{eq:yon}) over $\beta$ agrees with the homotopy fibre of 
\begin{equation}\label{eq:yonmainmap}
\RR \map((X,\sigma), (Y,\beta)) \xrightarrow{v \circ} \RR \map((X,\sigma), (B, \pi\sigma))
\end{equation}
over $\pi$. We recast the map (\ref{eq:yonmainmap}) as the left-hand vertical arrow in the commutative square below: 
\begin{equation}\label{eq:usq}
	\begin{tikzpicture}[descr/.style={fill=white}, baseline=(current bounding box.base)] ] 
	\matrix(m)[matrix of math nodes, row sep=2.5em, column sep=2.5em, 
	text height=1.5ex, text depth=0.25ex] 
	{
	\RR \map((X,\sigma), (Y,\beta)) & \RR \map((\sigma/X,s \sigma), (\beta/Y, s \beta)) \\
	\RR \map((X,\sigma), (B, v\beta)) & \RR \map((\sigma/X, s \sigma), (v\beta/B, s {v\beta})) \\
	}; 
	\path[->,font=\scriptsize] 
		(m-1-1) edge node [auto] {$(\star)$} (m-1-2)
		(m-2-1) edge node [auto] {$$} (m-2-2)
		(m-1-1) edge node [auto] {$v \circ $} (m-2-1)
		(m-1-2) edge node [auto] {$v \circ$} (m-2-2);
	\end{tikzpicture} 
\end{equation}
Recall that $s\sigma$ refers to the canonical lift $U \to (\sigma/X)_{\eta}$ of $\sigma$ along $(\sigma/X)_{\eta} \to X_\eta$ (and similary for $s\beta$ and $s v\beta$).

By Lemma \ref{lem:yonES}, we know that the right-hand map is a weak equivalence. In the remainder of the proof, we will show that the horizontal maps are weak equivalences.

Informally, the statement about the horizontal maps is a dendroidal version of the $1$-categorical triviality that a functor from a category $C$ to a category $D$ sending a fixed object $c \in C$ to a fixed object $d \in D$ is the same as a functor between undercategories $c/C \to d/D$ sending $id_{c}$ to $id_d$, provided $c$ is an \emph{initial} object of $C$. We now give more details, concentrating on the top horizontal map $(\star)$ since the argument for the lower one is identical. The composite map
\[
\RR \map((X,\sigma), (Y,\beta)) \xrightarrow{(\star)} \RR \map((\sigma/X,s \sigma), (\beta/Y, s \beta)) 
\xrightarrow{\pi_Y} \RR \map((\sigma/X,s \sigma), (Y, \beta))
\]
agrees with precomposition with $\pi_X : \sigma/X \to X$, and therefore it is a weak equivalence since $\sigma$ is initial. This means that $(\star)$ has a homotopy left inverse.

To simplify notation below, let us temporarily write $s\sigma/X$ as short-hand for $s\sigma/(\sigma/X)$, the under-dendroidal space of $\sigma/X$ under $s\sigma : U \to (\sigma/X)_{\eta}$. The composite map
\[
	\begin{tikzpicture}[descr/.style={fill=white}] 
	\matrix(m)[matrix of math nodes, row sep=2.5em, column sep=2.5em, 
	text height=1.5ex, text depth=0.25ex] 
	{
	\RR \map((\sigma/X,s\sigma), (\beta/Y,s\beta)) & \RR \map((\sigma/X,s\sigma), (Y,\beta))  \\
	 & \RR \map((s\sigma/X,ss\sigma), (\beta/Y,s\beta)) \\
	}; 
	\path[->,font=\scriptsize] 
		(m-1-1) edge node [auto] {} (m-1-2);
	\path[->,font=\scriptsize] 
		(m-1-2) edge node [auto] {} (m-2-2);
	\end{tikzpicture} 
\]
agrees with precomposition with $\pi : s\sigma/X \to \sigma/X$ and so is a weak equivalence (since $s\sigma$ is initial in $\sigma/X$). Because $\sigma$ is initial in $X$, the right-hand map is identified with $(\star)$ and we conclude that $(\star)$ also has a homotopy right inverse. 
\end{proof}

This lemma will be heavily used in the following sections, mostly in the form of a corollary that we now describe. 

\begin{cor}\label{cor:yon} Let $B$ be a Segal dendroidal space. Then for a map $\sigma : U \to B_{\eta}$ of spaces, the map $\Omega[\eta] \otimes U \to \sigma/B$ (corresponding to $s\sigma$) is a covariant weak equivalence over $B$.
\end{cor}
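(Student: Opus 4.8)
The plan is to read off the corollary as a direct instance of Lemma~\ref{lem:yon}, applied not to $B$ but to the dendroidal space $\sigma/B$ itself. First I would take $X := \sigma/B$ equipped with the projection $\pi : \sigma/B \to B$ constructed after Definition~\ref{defn:underoperad}. By the (unlabelled) lemma immediately preceding Lemma~\ref{lem:yon}, this $\pi$ is a covariant fibration. Since the base $B$ is a Segal dendroidal space, a covariant fibration over it automatically has a Segal dendroidal space as domain --- this is the reduction to corollas recorded right after the definition of covariant fibration, and is also a consequence of Proposition~\ref{prop:covfib_is_compl}. Hence $\pi : \sigma/B \to B$ is a map between Segal dendroidal spaces, which supplies the first hypothesis of Lemma~\ref{lem:yon}.

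The second ingredient is the canonical lift $s\sigma : U \to (\sigma/B)_\eta$ of $\sigma$ along $(\sigma/B)_\eta \to B_\eta$, described just before Lemma~\ref{lem:yon}. As observed there, the comparison map $s\sigma/(\sigma/B) \to \sigma/B$ is an isomorphism at $\eta$; since $\pi$ is a covariant fibration, being a weak equivalence on colours already suffices, so $s\sigma$ is initial in $\sigma/B$. This provides the second hypothesis of Lemma~\ref{lem:yon}, namely an initial map $U \to (\sigma/B)_\eta$.

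With both hypotheses verified, I would apply Lemma~\ref{lem:yon} to $\pi : \sigma/B \to B$ together with the initial map $s\sigma : U \to (\sigma/B)_\eta$. Its conclusion is precisely that the adjoint map $\Omega[\eta] \otimes U \to \sigma/B$ is a weak equivalence in the covariant model structure over $B$, which is the assertion of the corollary. There is no real obstacle here beyond bookkeeping: the only step deserving attention is confirming that $\sigma/B$ is a Segal dendroidal space so that Lemma~\ref{lem:yon} is applicable, and this is immediate from $\pi$ being a covariant fibration over the Segal space $B$.
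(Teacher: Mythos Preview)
Your proposal is correct and follows exactly the paper's own argument: apply Lemma~\ref{lem:yon} with $X = \sigma/B$, using that $s\sigma$ is initial. You have simply spelled out the verifications (that $\sigma/B$ is Segal, that $s\sigma$ is initial) which the paper leaves implicit in its one-line proof.
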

\begin{proof}
The map $s\sigma$ is initial so we may apply Lemma \ref{lem:yon} with $X = \sigma/B$.
\end{proof}

\section{Invariance under Dwyer-Kan equivalences}\label{sec:invariance}

Let $f : P \to Q$ be a map of Segal dendroidal spaces. Given a dendroidal space $X$ over $Q$, we obtain a dendroidal space $f^* X$ over $P$ by base change, i.e. taking the pullback of $X$ along $f$. This restriction map has a left adjoint $f_!$ which is given by composition, i.e.
\[
f_! ( Y \to P) = Y \to P \to Q
\]
for a dendroidal space $Y$ over $P$. In this section we prove

\begin{thm}\label{thm:invariance}
Let $f : P \to Q$ be a Dwyer-Kan equivalence between Segal dendroidal spaces. Then the adjoint pair
\[
f_! : \dSp{/P} \leftrightarrows \dSp{/Q} : f^*
\]
is a simplicial Quillen equivalence for the covariant (projective or Reedy) model structures.
\end{thm}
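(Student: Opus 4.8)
The plan is to verify that the Quillen pair is a Quillen equivalence through the (dual form of the) standard criterion: that $f^*$ reflects covariant weak equivalences between covariant fibrations over $Q$, and that for every cofibrant $Y$ over $P$ the derived unit $Y \to \RR f^* \LL f_! Y$ is a covariant weak equivalence. Throughout we freely use Lemma \ref{lem:covwe}, which lets us detect covariant weak equivalences between covariant fibrations fibrewise over the colours.

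For the reflecting condition, let $g : Z \to Z'$ be a map of covariant fibrations over $Q$ with $f^* g$ a covariant weak equivalence over $P$; we must show $g$ is one, i.e.\ that $Z_q \to Z'_q$ is a weak equivalence on the fibre over every colour $q \in Q_\eta$. The essential tool is a transport property: evaluating the homotopy cartesian square (\ref{eq:covfib}) at the unary corolla $C_1$ shows that a unary operation $q \to q'$ in $Q$ induces a map of fibres $Z_q \to Z_{q'}$, natural in $Z$ and a weak equivalence whenever the operation is homotopy invertible. Since $f$ is a Dwyer--Kan equivalence, $i^* f$ is essentially surjective, so each $q$ is connected by a homotopy invertible unary operation to some $f(p')$ with $p' \in P_\eta$; naturality then produces a square comparing $Z_q \to Z'_q$ with $Z_{f(p')} \to Z'_{f(p')}$ through these transport equivalences. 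The latter map is the fibre of $f^* g$ over $p'$ and so is a weak equivalence by hypothesis, whence so is the former. This step---making the comparison of fibres over equivalent colours natural in the covariant fibration, which is where fibrewise completeness (Proposition \ref{prop:covfib_is_compl}) and the surjective half of the Dwyer--Kan hypothesis genuinely enter---is the main obstacle; the rest is formal.

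For the derived unit we reduce to colours. Every cofibrant $Y$ over $P$ has the canonical simplicial resolution whose terms are coproducts of generators $\Omega[T] \otimes \Delta^n \to P$ and whose geometric realisation is $Y$. Applying Lemma \ref{lem:reduction}, with $\mathbf{F} = (\dSp{/P})_{P}$, $\mathbf{E} = (\dSp{/Q})_P$ and the two covariant localisations, reduces the claim about $Y$ to the same claim about each generator: its hypotheses hold because $\RR f^*$ is computed objectwise as a homotopy pullback of simplicial sets and so commutes with geometric realisation by universality of homotopy colimits, and because the covariant fibration condition, being expressed by the homotopy cartesian squares (\ref{eq:covfib}), is preserved under geometric realisation. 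As $\LL f_!$ and $\RR f^*$ preserve covariant weak equivalences between (co)fibrant objects and commute with coproducts, and as the localising maps (\ref{eq:tophorn}) identify $\Omega[T] \to P$ with $\coprod_{\lambda(T)} \eta \to P$ in the covariant structure, we reduce first to the generators $\Omega[T] \to P$ and then to a single colour $Y = (\eta \xrightarrow{p} P)$.

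Finally, for $Y = (\eta \xrightarrow{p} P)$ Corollary \ref{cor:yon} identifies its covariant fibrant replacement with the slice $p/P$, while $\LL f_! Y \simeq (\eta \xrightarrow{fp} Q)$ has fibrant replacement $fp/Q$; thus the derived unit is modelled by the map $p/P \to f^*(fp/Q)$ of covariant fibrations over $P$. By Lemma \ref{lem:covwe} it suffices to check it fibrewise over each $p' \in P_\eta$. There, the fibre computation of the Yoneda section (around (\ref{eq:undercfib})) identifies the map with the homotopy colimit over $\Sigma$ of the comparison maps $P_{C_n} \to Q_{C_n}$ taken on the fibres over the leaf colours $(p,\dots,p)$, $(fp,\dots,fp)$ and roots $p'$, $fp'$. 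Each of these is a weak equivalence because the square (\ref{eq:ff}) is homotopy cartesian, i.e.\ because $f$ is fully faithful, and homotopy colimits preserve weak equivalences; hence the derived unit is a covariant weak equivalence, completing the proof.
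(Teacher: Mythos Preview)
Your proof is correct and follows essentially the same strategy as the paper's: the paper packages your conservativity argument as Proposition~\ref{prop:pullback_conservative} and your final fibrewise computation as Proposition~\ref{prop:DK_ff}, then assembles them exactly as you do via Lemma~\ref{lem:reduction} and Corollary~\ref{cor:yon}. The only cosmetic difference is that the paper keeps the reduction at the level of $\Omega[\eta]\otimes C$ for a set $C$ of colours rather than passing down to a single $\eta \xrightarrow{p} P$, but your further reduction via coproducts is valid since homotopy pullback in simplicial sets commutes with coproducts and covariant fibrations are closed under them.
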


\begin{rem}
We emphasise that we do not assume $P$ and $Q$ to be \emph{complete} Segal dendroidal spaces. In fact, under this stronger assumption, $f$ is a weak equivalence between fibrant objects and by Brown's lemma we may assume that it is a trivial fibration, in which case the theorem is obvious.
\end{rem}

We will deduce Theorem \ref{thm:invariance} from the following two propositions.

\begin{prop}\label{prop:DK_ff}
Let $f : P \to Q$ be a fully faithful map between Segal dendroidal spaces and let $\sigma : U \to P_{\eta}$ be a map of spaces. Then the square
\begin{equation*}
	\begin{tikzpicture}[descr/.style={fill=white} ] 
	\matrix(m)[matrix of math nodes, row sep=2.5em, column sep=2.5em, 
	text height=1.5ex, text depth=0.25ex] 
	{
	\sigma/P & f\sigma/Q \\
	P &  Q \\
	}; 
	\path[->,font=\scriptsize] 
		(m-1-1) edge node [auto] {$$} (m-1-2)
		(m-2-1) edge node [auto] {} (m-2-2)
		(m-1-1) edge node [auto] {$$} (m-2-1)
		(m-1-2) edge node [auto] {$$} (m-2-2);
	\end{tikzpicture} 
\end{equation*}
is homotopy cartesian.
\end{prop}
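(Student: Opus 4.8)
The plan is to check that the square is homotopy cartesian \emph{levelwise}, i.e. that for every tree $T$ the square of simplicial sets
\[
\begin{array}{ccc}
(\sigma/P)_T & \longrightarrow & (f\sigma/Q)_T \\
\downarrow & & \downarrow \\
P_T & \longrightarrow & Q_T
\end{array}
\]
is homotopy cartesian. Both vertical maps are of the form $\sigma'/X \to X$, hence covariant fibrations by the lemma that $\pi\colon\sigma/X\to X$ is always a covariant fibration; so the square is homotopy cartesian precisely when, for each $p \in P_T$ with image $q = f(p) \in Q_T$, the induced map on vertical homotopy fibres is a weak equivalence. First I would recall the description of these fibres from the proof of that lemma: writing $Z^P_{C_n}$, $Z^Q_{C_n}$ for the homotopy pullbacks defining $\sigma/P$ and $f\sigma/Q$ at the corolla $C_n$, the reindexing $C(T) \simeq \Sigma^{\times \lambda(T)}$ identifies the fibre of $(\sigma/P)_T \to P_T$ over $p$ with
\[
\prod_{\ell \in \lambda(T)} \hocolim{n \in \Sigma} \textup{hofibre}_{p_\ell}\big(Z^P_{C_n} \xrightarrow{\text{root}} P_\eta\big),
\]
where $p_\ell \in P_\eta$ is the image of $p$ under the $\ell$-th leaf map, and the analogous formula holds for $Q$ with $q_\ell = f(p_\ell)$ since leaf selection is natural. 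Because a levelwise equivalence of $\Sigma$-diagrams induces an equivalence on homotopy colimits, this reduces the whole statement to showing, for each $n$ and each $x \in P_\eta$, that
\[
\textup{hofibre}_x\big(Z^P_{C_n} \to P_\eta\big) \longrightarrow \textup{hofibre}_{f(x)}\big(Z^Q_{C_n} \to Q_\eta\big)
\]
is a weak equivalence.

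For this corolla case I would rewrite both fibres as homotopy pullbacks over $P_\eta^{\times(n+1)}$ and $Q_\eta^{\times(n+1)}$. Since $Z^P_{C_n} = P_{C_n} \times^h_{P_\eta^{\times n}} U^{\times n}$ (the pullback along the $n$ leaf maps against $\sigma^{\times n}$) and the displayed map is the root projection, combining root and leaves into the single map $P_{C_n} \to P_\eta^{\times(n+1)}$ yields
\[
\textup{hofibre}_x\big(Z^P_{C_n} \to P_\eta\big) \;\simeq\; P_{C_n} \times^h_{P_\eta^{\times(n+1)}} \big(\{x\} \times U^{\times n}\big),
\]
where $\{x\} \times U^{\times n} \to P_\eta^{\times(n+1)}$ is $x$ on the root coordinate and $\sigma^{\times n}$ on the leaf coordinates, and similarly for $Q$ with $f(x)$ and $(f\sigma)^{\times n}$. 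The hypothesis that $f$ is fully faithful says exactly that the square relating $P_{C_n}$ and $Q_{C_n}$ over $P_\eta^{\times(n+1)} \to Q_\eta^{\times(n+1)}$ is homotopy cartesian, i.e. $P_{C_n} \simeq Q_{C_n} \times^h_{Q_\eta^{\times(n+1)}} P_\eta^{\times(n+1)}$. Base-changing this equivalence along $\{x\} \times U^{\times n} \to P_\eta^{\times(n+1)}$ and cancelling the intermediate pullback identifies the $P$-fibre with $Q_{C_n} \times^h_{Q_\eta^{\times(n+1)}} (\{f(x)\} \times U^{\times n})$, which is precisely the $Q$-fibre.

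The hard part is this last step: matching the full-faithfulness square, stated in terms of the root together with \emph{all} leaves of a corolla, against the fibrewise picture of $\sigma/P$ and $f\sigma/Q$, and checking that the resulting chain of equivalences really is the map induced by $f$ rather than merely an abstract identification of the two fibres. Everything else is the formal yoga of homotopy (co)limits, using the decomposition $C(T) \simeq \Sigma^{\times \lambda(T)}$ and the stability of homotopy colimits of spaces under base change, exactly as in the proof that $\sigma/X \to X$ is a covariant fibration. I would also note that the reduction can be phrased more symmetrically: by descent, pulling back $\hocolim{C(T)} Z^Q_{T^\sharp} \to Q_T$ along $P_T \to Q_T$ commutes with the homotopy colimit, so it suffices to show each $Z^P_{T^\sharp} \to P_T \times^h_{Q_T} Z^Q_{T^\sharp}$ is an equivalence; the Segal condition then splits $Z_{T^\sharp}$ over the attached corollas and returns one to the same corolla computation.
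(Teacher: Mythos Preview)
Your argument is correct, but it does more work than necessary by checking the square levelwise at every tree $T$ and unwinding the $C(T)\simeq\Sigma^{\times\lambda(T)}$ decomposition each time. The paper's proof is shorter: since both vertical maps are covariant fibrations and covariant fibrations are stable under pullback, the comparison map $\sigma/P \to f^*(f\sigma/Q)$ is a map between covariant fibrations over $P$; by Lemma~\ref{lem:covwe} such a map is a (levelwise) weak equivalence as soon as it is one at $\eta$. Thus the paper reduces immediately to the single case $T=\eta$, where the fibre computation is exactly your corolla calculation (a hocolim over $\Sigma$ of the $Z_{C_n}$-fibres), and fully faithfulness finishes it. In effect, your proof re-derives the content of Lemma~\ref{lem:covwe} inside the argument by redoing the corolla decomposition at each $T$; the paper instead invokes that lemma once. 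Your route has the minor advantage of being self-contained and making the role of the full-faithfulness square completely explicit, while the paper's route isolates the structural reason the reduction works.
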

\begin{proof}
We may assume that the map $f$ is a projective fibration between projectively fibrant objects. Indeed, the hypothesis, the conclusion and the construction involved are all invariant under levelwise weak equivalences. The vertical maps are covariant fibrations, and those are invariant under pullback, so the claim is that the induced map $\sigma/P \to f^* (f\sigma/Q)$ of covariant fibrations over $P$ is a weak equivalence. Since weak equivalences between covariant fibrations are determined on colours (Lemma \ref{lem:covwe}), it suffices to show that for every $x \in P_\eta$, the induced map on fibres over $x$ is a weak equivalence of spaces. This follows immediately from the assumption that $f$ is fully faithful.
\end{proof}

\begin{prop}\label{prop:pullback_conservative}
Let $f : P \to Q$ be an essentially surjective map between Segal dendroidal spaces. A map $u : X \to Y$ between covariant fibrations over $Q$ is a covariant weak equivalence if and only if the pullback $\RR f^* X \to \RR f^*Y$ over $P$ is a covariant weak equivalence.
\end{prop}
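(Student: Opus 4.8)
The plan is to reduce the statement to the level of colours, then to a fibrewise statement over $Q_\eta$, and only there to bring essential surjectivity into play. First I would record that covariant fibrations are stable under pullback (as already used in the proof of Proposition \ref{prop:DK_ff}), so that $\RR f^* X \to P$ and $\RR f^* Y \to P$ are again covariant fibrations and $\RR f^* u$ is a map between such. By Lemma \ref{lem:covwe}, $u$ is a covariant weak equivalence if and only if $u_\eta : X_\eta \to Y_\eta$ is a weak equivalence over $Q_\eta$, and likewise $\RR f^* u$ is a covariant weak equivalence if and only if $(\RR f^* u)_\eta$ is a weak equivalence over $P_\eta$. Since we may assume $X \to Q$ and $Y \to Q$ are levelwise fibrations, evaluation at $\eta$ turns $f^*$ into the (homotopy) pullback, so $(\RR f^* X)_\eta \simeq X_\eta \times^h_{Q_\eta} P_\eta$, and the homotopy fibre of $(\RR f^* X)_\eta \to P_\eta$ over a point $p \in P_\eta$ is identified, compatibly with $u$, with the homotopy fibre of $X_\eta \to Q_\eta$ over $f(p)$.

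Next I would invoke the fibrewise criterion for weak equivalences of spaces: a map over a base is a weak equivalence precisely when it induces a weak equivalence on homotopy fibres over every point of the base. Combined with the previous paragraph, $u$ is a covariant weak equivalence if and only if $u$ induces a weak equivalence on the homotopy fibres of $X_\eta \to Q_\eta$ and $Y_\eta \to Q_\eta$ over every $q \in Q_\eta$, whereas $\RR f^* u$ is a covariant weak equivalence if and only if the same holds over every point in the image of $f$, i.e. over all $f(p)$ with $p \in P_\eta$. The forward implication is then immediate, since a condition holding over all of $Q_\eta$ holds in particular over the image of $f$ (equivalently, homotopy pullback preserves weak equivalences).

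The substance is the reverse implication, and the hard part will be the transport of fibres of a covariant fibration along an equivalence of objects. Concretely, I would establish: if $X \to Q$ is a covariant fibration and $q_0, q_1 \in Q_\eta$ are equivalent objects---connected by a homotopy-invertible edge of the Segal space $i^* Q$---then the homotopy fibres of $X_\eta \to Q_\eta$ over $q_0$ and $q_1$ are weakly equivalent, naturally in maps of covariant fibrations over $Q$. This reduces to the corresponding statement for left fibrations of Segal spaces: applying $i^*$, the map $i^* X \to i^* Q$ is a covariant fibration of Segal spaces, the relevant fibres are computed at level zero and are unchanged, and covariant (left) fibrations of Segal spaces transport fibres along equivalences by \cite{Boavida}. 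Naturality in $u$ produces a homotopy-commutative square relating the fibres of $X$ and of $Y$ over $q_0$ and over $q_1$, whose two horizontal transport maps are weak equivalences.

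To finish, given an arbitrary $q \in Q_\eta$, essential surjectivity of $i^* f$ supplies a $p \in P_\eta$ together with an equivalence between $f(p)$ and $q$ in $i^* Q$. By hypothesis $\RR f^* u$ is a covariant weak equivalence, so $u$ induces a weak equivalence on homotopy fibres over $f(p)$; transporting along the equivalence $f(p) \simeq q$ and applying two-out-of-three in the naturality square, $u$ also induces a weak equivalence on homotopy fibres over $q$. As $q$ was arbitrary, $u$ is a covariant weak equivalence. I expect the transport-of-fibres input to be the only non-formal ingredient; everything else is a matter of assembling the fibrewise criterion together with Lemma \ref{lem:covwe}.
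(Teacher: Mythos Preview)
Your proposal is correct and follows essentially the same route as the paper: reduce to colours via Lemma~\ref{lem:covwe}, compare homotopy fibres over $Q_\eta$, observe the forward direction is trivial, and for the converse use essential surjectivity together with transport of fibres along equivalences of objects. The only cosmetic difference is that the paper invokes its own Proposition~\ref{prop:covfib_is_compl} (covariant fibrations are complete fibrations) to justify the transport step, whereas you appeal to \cite{Boavida} directly; since the proof of Proposition~\ref{prop:covfib_is_compl} itself rests on \cite{Boavida}, this is the same ingredient packaged differently.
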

\begin{proof}
We may assume that $X \to Q$ and $Y \to Q$ are projective fibrations. We again make use of the fact (Lemma \ref{lem:covwe}) that a map between covariant fibrations is a covariant weak equivalence if and only if it is a weak equivalence on colours. Clearly, if $u$ is a weak equivalence on colours then the same holds for $f^*u$. Conversely, consider the diagram
\begin{equation*}
	\begin{tikzpicture}[descr/.style={fill=white} ] 
	\matrix(m)[matrix of math nodes, row sep=2.5em, column sep=2.5em, 
	text height=1.5ex, text depth=0.25ex] 
	{
	(f^*X)_\eta & X_\eta & Y_\eta \\
	P_\eta & Q_\eta & Q_\eta \\
	}; 
	\path[->,font=\scriptsize] 
		(m-1-1) edge node [auto] {$$} (m-1-2)
		(m-2-1) edge node [auto] {} (m-2-2)
		(m-1-1) edge node [auto] {$$} (m-2-1)
		(m-1-2) edge node [auto] {$$} (m-2-2)
		(m-1-2) edge node [auto] {$$} (m-1-3)
		(m-1-3) edge node [auto] {} (m-2-3);
	\path[->,font=\scriptsize] 
		(m-2-2) edge node [auto] {$id$} (m-2-3);
	\end{tikzpicture} 
\end{equation*}
in which the left-hand square is (homotopy) cartesian by definition, and the outer rectangle is (homotopy) cartesian by the assumption that $f^*u$ is a weak equivalence. Therefore, for any $p \in P_{\eta}$, the induced map between the vertical homotopy fibres of the right-hand square over $f(p)$ is a weak equivalence.

Now let $q \in Q_{\eta}$. Since $f$ is essentially surjective, there exists some $p$ such that $f(p)$ is homotopy equivalent to $q$. By Proposition \ref{prop:covfib_is_compl}, the homotopy fibres of $X_\eta \to Q_{\eta}$ over $q$ and $f(p)$ are weakly equivalent, and the same is true for $Y_\eta \to Q_\eta$. Therefore, the induced map between the vertical homotopy fibres of the right-hand square over $q$ is also a weak equivalence. In other words, the map $f$ is a weak equivalence on colours.
\end{proof}

\begin{proof}[Proof of Theorem \ref{thm:invariance}]
The adjunction is a Quillen pair since the left adjoint preserves cofibrations and $f^*$ preserves covariant fibrations.
To show that it is a Quillen equivalence, we proceed in two steps. The essential surjectivity of $f$ implies that $\RR f^*$ is homotopy conservative by Proposition \ref{prop:pullback_conservative}. In the remainder of the proof we show that the derived unit map is a weak equivalence. By the triangular identities for an adjunction and the homotopy conservativity of $\RR f^*$, it then follows that the derived counit is also a weak equivalence, thus proving the result.

The functor $\RR f^*$ commutes with homotopy colimits, since homotopy colimits in spaces are stable under homotopy base change. For the same reason, if $W_\bullet \to Q$ is a simplicial object in the category of dendroidal spaces (over $Q$) and each $W_n$ is a covariant fibration over $Q$, then $|W_\bullet| \to Q$ is a covariant fibration. Therefore, by Lemma \ref{lem:reduction}, it suffices to show that the derived unit map $Z \to \RR f^* \LL f_! Z$ is a weak equivalence for a cofibrant object $Z$ of the form
\[
\coprod_{i \in I} \Omega[T^i] \otimes \Delta^{n_i} \to P
\]
for some set $I$. Consider the diagram (over $P$)
\begin{equation*}
	\begin{tikzpicture}[descr/.style={fill=white}, baseline=(current bounding box.base)] ]
	\matrix(m)[matrix of math nodes, row sep=2.5em, column sep=2.5em, 
	text height=1.5ex, text depth=0.25ex] 
	{
	Z & \RR f^* f_! Z \\
	\Omega[\eta] \otimes C & \RR f^* f_!(\Omega[\eta] \otimes C)  \\
	}; 
	\path[->,font=\scriptsize] 
		(m-1-1) edge node [auto] {$$} (m-1-2)
		(m-2-1) edge node [auto] {$$} (m-2-2);
	\path[<-,font=\scriptsize] 
		(m-1-1) edge node [left] {$$} (m-2-1)
		(m-1-2) edge node [auto] {$$} (m-2-2);
	\end{tikzpicture} 
\end{equation*}
where $C := \coprod_{i \in I} \lambda(T^i) \simeq \coprod_{i \in I} \lambda(T^i) \times \Delta^{n_i}$. The vertical maps are by definition weak equivalences in the covariant model structure over $P$. Now let $\sigma : C \to P_{\eta}$ denote the map adjoint to $\Omega[\eta] \otimes C \to P$. By the Yoneda lemma (in the formulation of Corollary \ref{cor:yon}), the lower horizontal arrow is weakly equivalent to
\[
\sigma/P \to f^* (f\sigma/Q)
\]
(over $P$). We showed in Proposition \ref{prop:DK_ff} that this map is a levelwise weak equivalence. Therefore, the derived unit map is a weak equivalence.
\end{proof}

\section{Dendroidal spaces and algebras over an operad}\label{sec:algebras}
Let $P$ be a $\Sigma$-free coloured operad in simplicial sets. The category of algebras in simplicial sets carries a model structure transferred from the Kan-Quillen model structure on simplicial sets \cite{Berger-Moerdijk}. The aim of this section is to prove the following theorem, which relates this model category to the covariant model structure on dendroidal spaces.

\begin{thm}\label{thm:alg}
Let $P$ be a $\Sigma$-free operad in simplicial sets and let $NP$ be its dendroidal nerve. There is a natural Quillen equivalence
\[
\mathcal{F}_P : (\dSp{/NP})_{P, cov} \leftrightarrows \mathsf{Alg}(P) : \sG_P
\]
between the covariant localisation of the projective model structure on $\dSp{/NP}$ and the model category of $P$-algebras.
\end{thm}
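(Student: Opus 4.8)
The plan is to follow the architecture of the proof of Theorem~\ref{thm:invariance}: exhibit the Quillen pair, show that the right adjoint reflects weak equivalences between fibrant objects, and verify that the derived unit is a weak equivalence by reducing, via Lemma~\ref{lem:reduction} and the Yoneda lemma, to the representable generators. The geometric input is an explicit model for $\sG_P$: for a $P$-algebra $A$, I take $\sG_P(A)_T$ to be the space of pairs consisting of a point $\phi\in NP_T$ together with a labelling of the leaves $\ell\in\lambda(T)$ by elements $a_\ell\in A_{\phi(\ell)}$, with the $\Omega$-action on morphisms governed by the algebra structure (an edge of $S$ mapping to an inner edge or the root of $T$ being handled by the operad action on $A$). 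With this description the fibre of $\sG_P(A)_T\to NP_T$ over $\phi$ is $\prod_{\ell\in\lambda(T)}A_{\phi(\ell)}$, which depends only on the colours of the leaves; hence for $A$ fibrant (levelwise Kan) the square~(\ref{eq:covfib}) is a homotopy pullback and $\sG_P(A)$ is a covariant fibration over $NP$.

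First I would check that $(\mathcal{F}_P,\sG_P)$ is a Quillen pair for the covariant structure. It suffices that $\sG_P$ be right Quillen for the unlocalised projective Segal structure on $\dSp/NP$ — which follows from the fibrewise description, since $\sG_P$ is built from Kan fibrations by pullback and finite products — together with the observation of the previous paragraph that $\sG_P$ sends fibrant algebras to covariant-fibrant (local) objects. Next, conservativity: a weak equivalence of $P$-algebras is a weak equivalence on each colour, while on colours $\sG_P(A)_\eta\simeq\coprod_c A_c$ over $NP_\eta$. Since a map between covariant fibrations over $NP$ is a covariant weak equivalence exactly when it is a weak equivalence on colours (Lemma~\ref{lem:covwe}), the functor $\RR\sG_P$ reflects weak equivalences between fibrant objects.

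It remains to show that the derived unit $Z\to\RR\sG_P\LL\mathcal{F}_P Z$ is a covariant weak equivalence for cofibrant $Z$. As in Theorem~\ref{thm:invariance}, I would apply Lemma~\ref{lem:reduction} to the canonical resolution of $Z$: hypothesis (a) holds because $\RR\sG_P$ is assembled from finite homotopy limits and therefore commutes with geometric realisation (a sifted homotopy colimit), and hypothesis (b) is automatic since the model structure on $\mathsf{Alg}(P)$ is not further localised. This reduces the problem to generators $Z=\coprod_i\Omega[T^i]\otimes\Delta^{n_i}\to NP$. Forming the square comparing the unit at $Z$ with the unit at $\Omega[\eta]\otimes C$, where $C:=\coprod_i\lambda(T^i)\times\Delta^{n_i}$ and whose vertical legs are the covariant equivalences of Corollary~\ref{cor:yon}, reduces everything to the case $Z=\Omega[\eta]\otimes U$ with $\sigma:U\to NP_\eta$.

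The heart of the argument — and the step I expect to be the main obstacle — is the identification $\RR\sG_P\LL\mathcal{F}_P(\Omega[\eta]\otimes U)\simeq\sigma/NP$, under which the derived unit becomes the canonical map $\Omega[\eta]\otimes U\to\sigma/NP$ and is thus a covariant weak equivalence by Corollary~\ref{cor:yon}. Here $\LL\mathcal{F}_P(\Omega[\eta]\otimes U)$ is the strict free $P$-algebra $F_P(U)$, whose colour-$d$ part is $\coprod_n\big(\coprod_{\vec c}P(\vec c;d)\times U_{c_1}\times\cdots\times U_{c_n}\big)/\Sigma_n$; evaluating $\sG_P$ on colours recovers $\coprod_d F_P(U)_d$, whereas $(\sigma/NP)_\eta=\hocolim{\Sigma}\,Z_{C_n}$ computes the same coproduct but with the homotopy orbits $(-)_{h\Sigma_n}$. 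The $\Sigma$-freeness of $P$ is precisely what makes these agree: the symmetric group acts freely, so strict and homotopy orbits coincide up to weak equivalence, and simultaneously it guarantees that $F_P(U)$ is the homotopically correct free algebra (so no genuine derivation of $\mathcal{F}_P$ is needed). Matching the two covariant fibrations on colours and invoking Lemma~\ref{lem:covwe} gives the identification. Finally, the Quillen pair together with conservativity of $\RR\sG_P$ and the derived unit being a weak equivalence yields, by the triangular identities as in Theorem~\ref{thm:invariance}, that the derived counit is a weak equivalence as well, completing the proof that $(\mathcal{F}_P,\sG_P)$ is a Quillen equivalence.
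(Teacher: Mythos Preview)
Your proposal is correct and follows essentially the same approach as the paper: establish the Quillen pair, use conservativity of $\sG_P$, reduce via Lemma~\ref{lem:reduction} to generators, and identify $\sG_P\mathcal{F}_P(\Omega[\eta]\otimes U)$ with $\sigma/NP$ via $\Sigma$-freeness so that Corollary~\ref{cor:yon} applies. One minor imprecision: in your reduction square the vertical legs are covariant equivalences not by Corollary~\ref{cor:yon} but because the left leg is a localising map~(\ref{eq:tophorn}) by definition and the right leg is the image under $\sG_P$ of an isomorphism (since $\mathcal{F}_P$ depends only on the leaves, as in the paper's Corollary~\ref{cor:T}).
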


We begin by describing the functors $\sG_P$ and $\mathcal{F}_P$ in the statement of the theorem. Because the operad $P$ is fixed throughout, we shall simply write $\sG$ and $\mathcal{F}$ for $\sG_P$ and $\mathcal{F}_P$.

\medskip

Recall that the nerve of a simplicial operad $P$ with set of colours $C$ is the dendroidal space defined by
\[
(NP)_T = \map(\Omega(T), P)
\]
where $\Omega(T)$ is the (free) coloured operad defined by the tree $T$. Elements of $(NP)_T$ can be thought of as labellings of the edges of $T$ by colours of $P$ and of the vertices by operations. In particular, any element $\xi \in (NP)_T$ restricts to a function $\lambda(\xi) : \lambda(T) \to C$ on the leaves of $T$.

\medskip
Given a $P$-algebra $A = \{(A(c)\}_{c \in C}$, the dendroidal space $\sG(A)$ is the nerve of a certain coloured operad $W$ (internal to simplicial sets, i.e. with a space rather than a set of colours). Colours of $W$ are pairs $(c, x)$ with $c \in C$ and $x \in A(c)$. For a collection of colours $c_1, \dots, c_n, d$, let $m$ denote the algebra multiplication map
\[
P(c_1, \dots, c_n;d) \times A(c_1) \times \dots A(c_n) \to A(d) \; .
\]
An operation in $W$ from $(c_1, x_1), \dots, (c_n, x_n)$ to $(d,y)$ exists if there exists an operation $z \in P(c_1, \dots, c_n;d)$ such that $y = m(z, x_1, \dots, x_n)$ in $A(d)$. From its definition, it is clear that $W$ comes equipped with a forgetful map to $P$.

\medskip
The nerve of $W$ is by definition $\sG(A)$. Thus,
\[
\sG(A)_{\eta} = \coprod_{c \in C} A(c)
\]
and, for an arbitrary tree $T$, the space $\sG(A)_T$ can be identified with the pullback
\[
(NP)_{T} \times_{\map(\lambda(T),(NP)_{\eta})} \map(\lambda(T),\sG(A)_{\eta})
\]
where $\lambda(T)$ is the set of leaves of $T$. This description makes it obvious that the map $\sG(A) \to NP$ is a covariant fibration and that the functor $\sG$ sends fibrations of $P$-algebras to levelwise fibrations. It is also clear that $\sG$ preserves and detects weak equivalences between arbitrary objects.

The functor $\sG$ admits a left adjoint $\mathcal{F}$, which is defined on objects of the form $\Omega[T] \otimes K \to NP$, for a space $K$ and a tree $T$, by 
\[
\mathcal{F}(\Omega[T] \otimes K \xrightarrow{\xi} NP) = \mathsf{Free}_P (\lambda(T) \times K \xrightarrow{\lambda(\xi)} C)\; ,
\]
the free $P$-algebra generated the simplicial set $\lambda(T) \times K$ over $C$, briefly denoted by $\mathsf{Free}_P(\lambda(\xi))$. This definition is clearly functorial with respect to the simplicial set $K$. To see that this assignment is also functorial with respect to the tree $T$, first note by using the operations of the operad $P$ given by the map $\xi$, we can assign an element of $\mathsf{Free}_P ({\lambda(\xi)})$ to each edge of $T$. Now, a morphism $f : S \to T$ over $NP$ sends each leaf $\ell$ of $S$ to an edge $f(\ell)$ in $T$, and hence induces a map 
$$\lambda(S) \times K \to \mathsf{Free}_P(\lambda(\xi))$$ 
respecting the colours. Hence, $f$ defines a $P$-algebra map from $\mathcal{F}(\Omega[S] \otimes K \to NP)$ to $\mathcal{F}(\Omega[T] \otimes K \to NP)$.

\begin{lem}\label{lem:counitGFrep}
Let $U$ be a space and $\sigma : U \to C$ a map. Then the unit morphism
\[
\Omega[\eta] \otimes U \longrightarrow \sG \mathcal{F}( \Omega[\eta] \otimes U \xrightarrow{\sigma} NP) 
\]
is a covariant weak equivalence.
\end{lem}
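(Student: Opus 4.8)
Let me unpack what both sides of the asserted weak equivalence actually are. On the left we have $\Omega[\eta]\otimes U$, whose value at a tree $T$ is $\coprod_{\lambda(T)} U$ essentially (the representable $\Omega[\eta]\otimes U$ is concentrated appropriately). On the right we have $\sG\mathcal{F}(\Omega[\eta]\otimes U\xrightarrow{\sigma} NP)$. Since $\mathcal F(\Omega[\eta]\otimes U \to NP) = \mathsf{Free}_P(\lambda(\eta)\times U \to C)$ and $\lambda(\eta)$ is a single leaf, this is just the free $P$-algebra on the $C$-coloured set $\sigma : U\to C$. Write $A := \mathsf{Free}_P(\sigma)$ for this free algebra. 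So the claim is that the unit
\[
\Omega[\eta]\otimes U \longrightarrow \sG(A)
\]
over $NP$ is a covariant weak equivalence.

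**The plan.**
The plan is to identify $\sG(A)$, up to covariant weak equivalence, with a $\sigma'/NP$-type construction and then invoke Corollary \ref{cor:yon}. First I would observe that $\sG(A)\to NP$ is a covariant fibration (this is noted just before the lemma) between Segal dendroidal spaces, so by Lemma \ref{lem:covwe} a map into it from another covariant object is a covariant weak equivalence if and only if it induces a weak equivalence on colours (i.e. at $\eta$). Thus the entire statement reduces to understanding the map at the level of $\eta$ together with the covariant-fibration structure. The key identification is this: let $\tau : \sG(A)_\eta \to NP_\eta$ be the structure map on colours; then because $\sG(A)\to NP$ is a covariant fibration, Lemma \ref{lem:yon} (or rather Corollary \ref{cor:yon} applied in the right form) tells us that $\Omega[\eta]\otimes \sG(A)_\eta \to \sG(A)$ is already a covariant weak equivalence. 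So it would be enough to show that the canonical map $U \to \sG(A)_\eta$ induced by the unit is itself initial, i.e. that the induced map on the $\sigma/(-)$ constructions, or equivalently the map $\Omega[\eta]\otimes U \to \Omega[\eta]\otimes \sG(A)_\eta$ followed by the covariant equivalence above, is a covariant weak equivalence.

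**Reducing to a statement about free algebras.**
Concretely, I would factor the comparison as
\[
\Omega[\eta]\otimes U \longrightarrow \sG(A) \longrightarrow NP,
\]
and use the Yoneda machinery of the previous section. Since $A$ is the \emph{free} $P$-algebra on $\sigma : U\to C$, its underlying coloured set $\sG(A)_\eta = \coprod_{c} A(c)$ consists of formal $P$-operations applied to elements of $U$. The essential point is that the single colour corresponding to a generator $u\in U$ is, in the operadic under-category $\sG(W)$ (the operad whose nerve is $\sG(A)$), an \emph{initial}-type object relative to the free structure: every colour of $W$ is reached from the generators $U$ by applying operations of $P$, and freeness guarantees there are no relations, so the space of such operations is contractible onto the generating data. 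I would make this precise by showing that $\sigma : U \to \sG(A)_\eta$ is initial in $\sG(A)$ in the sense defined before Lemma \ref{lem:yon}, namely that $\sigma/\sG(A) \to \sG(A)$ is a levelwise weak equivalence. Granting this, Lemma \ref{lem:yon} applied to the covariant fibration $\sG(A)\to NP$ gives immediately that $\Omega[\eta]\otimes U \to \sG(A)$ is a covariant weak equivalence over $NP$, which is exactly the claim.

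**The main obstacle.**
The step I expect to be the crux is verifying that $\sigma : U \to \sG(A)_\eta$ is \emph{initial}, i.e. computing $\sigma/\sG(A)$ and checking the projection to $\sG(A)$ is a levelwise equivalence. This is where the freeness of $A$ must be used in an essential way: one has to show that the homotopy colimit over $C(T)$ defining $(\sigma/\sG(A))_T$, with leaves labelled by $U$ and compatible with the free-algebra operations, recovers $\sG(A)_T$ up to weak equivalence. Because $\sG(A)_T$ is built as the pullback $(NP)_T \times_{\map(\lambda(T),(NP)_\eta)} \map(\lambda(T),\sG(A)_\eta)$ and $\sG(A)_\eta = \coprod_c A(c)$ with $A = \mathsf{Free}_P(\sigma)$, I would argue that labelling a tree's leaves by \emph{free-algebra} elements is the same, up to contractible choice, as first attaching corollas (the $C(T)$ data) labelled by the \emph{generators} $U$ and then composing via $P$. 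In other words, every free-algebra element arises, uniquely up to the $\Sigma$-action already built into $C(T)\simeq \prod_{\lambda(T)}\Sigma$, from a genuine tree of $P$-operations with $U$-labelled inputs — this is precisely the combinatorial description of a free operad-algebra, and the $\Sigma$-freeness hypothesis on $P$ ensures the relevant automorphism groups act freely so that the homotopy colimit computes correctly. I would therefore expect to spend most of the work matching the bar-type description of $\mathsf{Free}_P$ against the $C(T)$-indexed homotopy colimit, after which the equivalence on colours, hence the covariant weak equivalence, follows formally.
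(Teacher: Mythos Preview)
Your approach is correct and reaches the same conclusion, but it takes a detour that the paper avoids. The paper's proof is a single line: it observes that $\sG\mathcal{F}(\Omega[\eta]\otimes U \xrightarrow{\sigma} NP)$ \emph{coincides} with $\sigma/NP$ when $P$ is $\Sigma$-free, after which Corollary~\ref{cor:yon} applies directly. The identification comes from the standard free-algebra formula: at $\eta$ one has $\sG(A)_\eta = \coprod_c \mathsf{Free}_P(\sigma)(c)$, a coproduct over $n$ of $\Sigma_n$-quotients of spaces of $P$-operations with inputs labelled by $U$; this is exactly the homotopy colimit over $C(\eta)\simeq\Sigma$ defining $(\sigma/NP)_\eta$, with $\Sigma$-freeness ensuring the homotopy quotient equals the strict quotient. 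Since both $\sG(A)\to NP$ and $\sigma/NP\to NP$ are covariant fibrations, agreement at $\eta$ propagates to all trees.

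Your route instead shows that $\sigma : U \to \sG(A)_\eta$ is initial in $\sG(A)$ and then invokes Lemma~\ref{lem:yon}. This is valid, but note that by Lemma~\ref{lem:yonES} applied to the covariant fibration $\sG(A)\to NP$ one has $\sigma/\sG(A)\simeq \sigma/NP$, so proving initiality amounts precisely to the same identification $\sG(A)\simeq\sigma/NP$. The ``crux'' computation you outline in your final paragraph---matching free-algebra elements at the leaves with corolla-extensions labelled by generators, using $\Sigma$-freeness to control the automorphisms---\emph{is} this identification, just phrased from inside $\sG(A)$ rather than as a direct comparison with $\sigma/NP$. So the substance is identical; the paper simply makes the identification up front rather than unearthing it in the middle of an initiality argument.
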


\begin{proof}
This is a consequence of Corollary \ref{cor:yon} because $\sG \mathcal{F}(\Omega[\eta] \otimes U \xrightarrow{\sigma} NP)$ coincides with $\sigma/NP$ if the operad $P$ is $\Sigma$-free.
\end{proof}

We shall often leave the maps to $NP$ implicit, so for example we write $\Omega[\eta] \otimes U$ for $\sigma : \Omega[\eta] \otimes U \xrightarrow{} NP$. An immediate consequence of the lemma is the following.

\begin{cor}\label{cor:T}
Given a tree $T$ and a map $\Omega[T] \otimes U \to NP$, the unit 
\[
\Omega[T] \otimes U \to \sG F(\Omega[T] \otimes U)
\] is a covariant weak equivalence over $NP$.
\end{cor}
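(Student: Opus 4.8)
The plan is to reduce the corollary to the case of a single edge $\eta$, which was already settled in Lemma \ref{lem:counitGFrep}. The key observation is that $\Omega[T]$ is built out of corollas via the Segal condition, and that the leaves of $T$ are exactly the inputs that the free algebra functor $\mathcal{F}$ sees. Concretely, by the very definition of $\mathcal{F}$, the value $\mathcal{F}(\Omega[T] \otimes U \xrightarrow{\xi} NP)$ is the free $P$-algebra $\mathsf{Free}_P(\lambda(\xi))$ on the simplicial set $\lambda(T) \times U$ over $C$. This depends on $T$ only through its set of leaves $\lambda(T)$ and the induced labelling $\lambda(\xi) : \lambda(T) \times U \to C$.

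First I would make this dependence precise by comparing the map $\Omega[T] \otimes U \to NP$ with the map selecting its leaves, namely $\coprod_{\lambda(T)} \Omega[\eta] \otimes U \to NP$. The inclusion $\coprod_{\lambda(T)} \eta \hookrightarrow \Omega[T]$ is precisely the generating map (\ref{eq:tophorn}) at which the covariant model structure is localised, so after tensoring with $U$ the map
\[
\coprod_{\lambda(T)} \Omega[\eta] \otimes U \to \Omega[T] \otimes U
\]
is a covariant weak equivalence over $NP$. Since $\mathcal{F}$ is a left Quillen functor (being left adjoint to $\sG$, which preserves fibrations and weak equivalences), $\mathcal{F}$ sends this covariant trivial cofibration to a covariant weak equivalence between the corresponding free algebras; in fact both sides compute $\mathsf{Free}_P(\lambda(\xi))$, so this map is even an isomorphism of $P$-algebras.

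Next I would assemble the commutative square of unit maps relating the two sources. We have a square
\begin{equation*}
	\begin{tikzpicture}[descr/.style={fill=white}, baseline=(current bounding box.base)]
	\matrix(m)[matrix of math nodes, row sep=2.5em, column sep=2.5em,
	text height=1.5ex, text depth=0.25ex]
	{
	\coprod_{\lambda(T)} \Omega[\eta] \otimes U & \sG \mathcal{F}(\coprod_{\lambda(T)} \Omega[\eta] \otimes U) \\
	\Omega[T] \otimes U & \sG \mathcal{F}(\Omega[T] \otimes U) \\
	};
	\path[->,font=\scriptsize]
		(m-1-1) edge node [auto] {$$} (m-1-2)
		(m-2-1) edge node [auto] {$$} (m-2-2)
		(m-1-1) edge node [auto] {$$} (m-2-1)
		(m-1-2) edge node [auto] {$$} (m-2-2);
	\end{tikzpicture}
\end{equation*}
whose left vertical map is a covariant weak equivalence by the previous paragraph, and whose right vertical map is a covariant weak equivalence because $\mathcal{F}$ sends it to an isomorphism and $\sG$ preserves weak equivalences. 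The top horizontal map is a coproduct of the units of Lemma \ref{lem:counitGFrep}, one for each leaf, and is therefore a covariant weak equivalence. By two-out-of-three applied to this square, the bottom horizontal map—which is exactly the unit in the statement—is a covariant weak equivalence, as desired.

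The main obstacle I anticipate is verifying that the left vertical map is genuinely a covariant \emph{weak equivalence} and not merely a cofibration, together with checking that $\mathcal{F}$ preserves covariant weak equivalences on these explicit cofibrant generators rather than just trivial cofibrations. One must be slightly careful that $\mathcal{F}$ is left Quillen for the \emph{localised} (covariant) model structures, so that it carries the localising maps (\ref{eq:tophorn}) to weak equivalences; this is where $\Sigma$-freeness of $P$ enters, ensuring the identification with $\sigma/NP$ in Lemma \ref{lem:counitGFrep} is valid and that the free-algebra description behaves homotopically well. Once that point is granted, the coproduct and two-out-of-three arguments are routine.
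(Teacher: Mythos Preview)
Your proposal is essentially the paper's own proof: the same commutative square, the same identification of the left vertical map as a covariant weak equivalence by definition of the localisation, the same observation that the right vertical map is an isomorphism (since $\mathcal{F}$ depends only on the leaves), and the same two-out-of-three conclusion.

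One small imprecision worth fixing: you describe the top horizontal map as ``a coproduct of the units of Lemma~\ref{lem:counitGFrep}, one for each leaf''. That is not literally correct, because $\sG$ does not preserve coproducts, so the unit of a coproduct is not the coproduct of the units. The paper avoids this by noting that $\coprod_{\lambda(T)} \Omega[\eta] \otimes U = \Omega[\eta] \otimes (U \times \lambda(T))$ and then applying Lemma~\ref{lem:counitGFrep} a single time with the space $U \times \lambda(T)$ in place of $U$. With that adjustment your argument is identical to the paper's.
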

\begin{proof}
This follows by considering the square
\begin{equation*}
	\begin{tikzpicture}[descr/.style={fill=white} ] 
	\matrix(m)[matrix of math nodes, row sep=2.5em, column sep=2.5em, 
	text height=1.5ex, text depth=0.25ex] 
	{
	\amalg_{\lambda(T)} \Omega[\eta] \otimes U & \sG \mathcal{F} (\amalg_{\lambda(T)} \Omega[\eta] \otimes U) \\
	 \Omega[T] \otimes U & \sG \mathcal{F} (\Omega[T] \otimes U) \\
	}; 
	\path[->,font=\scriptsize] 
		(m-1-1) edge node [auto] {$$} (m-1-2)
		(m-2-1) edge node [auto] {} (m-2-2)
		(m-1-1) edge node [auto] {$$} (m-2-1)
		(m-1-2) edge node [auto] {$$} (m-2-2);
	\end{tikzpicture} 
\end{equation*}
where the left-hand map is a covariant weak equivalence by definition, and the right-hand map is an isomorphism. Since $\amalg_{\lambda(T)} \Omega[\eta] \otimes U = \Omega[\eta] \otimes (U \times \lambda(T))$ the map on top is a weak equivalence by Lemma \ref{lem:counitGFrep} and hence so is the map on the bottom of the square.
\end{proof}

\begin{rem} A map $\Omega[T] \to \sG \mathcal{F} (\Omega[T])$ (over $NP$) need not be a Reedy cofibration (and it is never a projective cofibration). Indeed, the functor $\mathcal{F}(-)$ does not preserve monomorphisms as one easily sees by applying it to $\eta \to \overline{\eta}$ where $\overline{\eta}$ denotes the tree with a single edge and a single vertex.
\end{rem}

\begin{proof}[Proof of Theorem \ref{thm:alg}]
We have already observed that $(\mathcal{F},\sG)$ form a Quillen pair and that $\sG$ preserves and detects weak equivalences between arbitrary objects. In particular, $\sG$ is homotopy conservative and is already derived. So to accomplish our task we will show that, for every \emph{projectively cofibrant} dendroidal space $X$ over $NP$, the unit map
\begin{equation}\label{eq:alg-unit}
X \to \sG \mathcal{F} (X) 
\end{equation}
is a covariant weak equivalence over $NP$. Lemma \ref{lem:reduction} applies as the assumptions there are trivially satisfied. So it suffices to prove this for $X$ of the form $\Omega[T] \otimes K$, which we have already established in Corollary \ref{cor:T}.
\end{proof}

\begin{cor}[Heuts \cite{Heuts}]
Let $P$ be a $\Sigma$-free simplicial operad and let $w^*(P)$ be its homotopy coherent nerve. There is a zig-zag of Quillen equivalences between $\dS/w^*(P)$ equipped with the covariant model structure and the model category of simplicial $P$-algebras.
\end{cor}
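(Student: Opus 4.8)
The plan is to assemble the desired zig-zag from two ingredients already developed in the paper: the Quillen equivalence of Theorem~\ref{thm:alg} relating $P$-algebras to the covariant model structure on $\dSp{/NP}$, and the Invariance Theorem~\ref{thm:invariance}, which allows us to replace $NP$ by a Dwyer-Kan equivalent Segal dendroidal space. The key observation is that the dendroidal nerve $NP$ and the homotopy coherent nerve $w^*(P)$ are connected by a Dwyer-Kan equivalence: the completion map $NP \to (NP)^\wedge$ into the completion is a Dwyer-Kan equivalence between Segal dendroidal spaces, and by \cite{Cisinski-Moerdijk3} the completion of the nerve of $P$ is (weakly) equivalent to $w^*(P)$. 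This is precisely the situation anticipated in the discussion preceding Corollary~\ref{cor:p-algs-dsets} in the introduction.

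First I would record the relevant zig-zag of comparisons at the level of dendroidal spaces. By Theorem~\ref{thm:alg} we have a Quillen equivalence
\[
\mathcal{F}_P : (\dSp{/NP})_{P, cov} \leftrightarrows \mathsf{Alg}(P) : \sG_P \; .
\]
Applying the Invariance Theorem~\ref{thm:invariance} to the Dwyer-Kan equivalence $f : NP \to w^*(P)$ (through the completion, as above) yields a Quillen equivalence
\[
f_! : (\dSp{/NP})_{cov} \leftrightarrows (\dSp{/w^*(P)})_{cov} : f^* \; .
\]
Since $f$ is a Dwyer-Kan equivalence of Segal dendroidal spaces, Theorem~\ref{thm:invariance} guarantees this pair is a Quillen equivalence for the covariant model structures on both sides. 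Composing, we obtain a zig-zag of Quillen equivalences relating $\mathsf{Alg}(P)$ to the covariant model structure on $\dSp{/w^*(P)}$.

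Next I would descend from dendroidal spaces to dendroidal sets. By Remark~\ref{rem:dsets-cov} (and the underlying result from \cite{Cisinski-Moerdijk2}), the covariant model structure on dendroidal spaces is Quillen equivalent to the covariant model structure on dendroidal sets, via the left Quillen equivalences
\[
\dS_{oper} \xrightarrow{\sim} \dSp_{RSC} \xrightarrow{\sim} \dS_{oper}
\]
which localise to equivalences $\dS_{cov} \simeq \dSp_{R,cov}$. Slicing this equivalence over $w^*(P)$ (a Segal dendroidal space, so the covariant localisation is well-behaved there) gives a zig-zag of Quillen equivalences between $\dS/w^*(P)$ with the covariant model structure and $(\dSp{/w^*(P)})_{cov}$. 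Concatenating all three zig-zags produces the claimed chain of Quillen equivalences between $\dS/w^*(P)$ and $\mathsf{Alg}(P)$.

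The main obstacle, and the point requiring the most care, is the identification of the completion of $NP$ with $w^*(P)$ and the verification that the comparison is through genuine Dwyer-Kan equivalences of \emph{Segal} dendroidal spaces so that Theorem~\ref{thm:invariance} applies. Here the subtlety is that $NP$ need not be complete, which is exactly why we invoke the Invariance Theorem rather than Brown's lemma (cf.~the remark following Theorem~\ref{thm:invariance}); the completion map is an equivalence in $\dSp_{RSC}$ but not an objectwise one, and one must ensure that both $NP$ and $w^*(P)$ satisfy the Segal condition, which for $NP$ follows from $P$ being an operad and for $w^*(P)$ from its construction in \cite{Cisinski-Moerdijk3}. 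Once this identification is in place, the remaining steps are formal consequences of composing Quillen equivalences, and the $\Sigma$-freeness hypothesis on $P$ is inherited directly from Theorem~\ref{thm:alg}.
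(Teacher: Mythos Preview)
Your approach is essentially the paper's: combine Theorem~\ref{thm:alg}, the Invariance Theorem~\ref{thm:invariance}, the comparison of Remark~\ref{rem:dsets-cov}, and the identification from \cite{Cisinski-Moerdijk3}. There is, however, one genuine imprecision in how you handle $w^*(P)$.

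You apply the Invariance Theorem to a map $f : NP \to w^*(P)$ and later call $w^*(P)$ ``a Segal dendroidal space''. But $w^*(P)$ is a dendroidal \emph{set}; regarded as a discrete dendroidal space it is Reedy fibrant, so its Segal maps are maps of \emph{sets}, and for the homotopy coherent nerve these are not bijections in general (it is inner Kan, not a strict nerve). Hence $w^*(P)$, viewed as a discrete dendroidal space, does \emph{not} satisfy the Segal condition of Definition~\ref{defn:Segaldend}, and Theorem~\ref{thm:invariance} does not apply to your map $f$ as stated. The same type confusion reappears in your step~3, where you slice the $\dS$--$\dSp$ comparison ``over $w^*(P)$'' simultaneously on both sides.

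The paper's proof avoids this by keeping the two worlds separate. It applies the Invariance Theorem to the completion map $NP \to \widehat{NP}$ (both genuine Segal dendroidal spaces), then invokes the sliced comparison in the form: for a \emph{complete} Segal dendroidal space $X$, one has $(\dSp/X)_{cov} \simeq (\dS/X_0)_{cov}$. Only after passing to dendroidal sets over $\widehat{NP}_0$ does it use the identification $\widehat{NP}_0 \simeq w^*(P)$ from \cite{Cisinski-Moerdijk3}, now purely at the level of dendroidal sets. Your argument becomes correct once reorganised along these lines.
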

\begin{proof}
For any complete Segal dendroidal space $X$, the Quillen equivalence between dendroidal sets and dendroidal spaces of Remark \ref{rem:dsets-cov} induces a Quillen equivalence between dendroidal sets over $X_0$ and dendroidal spaces over $X$ for the operadic model structure and the complete Segal structure, respectively. The same holds for the covariant model structures, since the covariant model structure on dendroidal spaces is a localisation of the complete one (Proposition \ref{prop:covfib_is_compl}).

By the invariance theorem, if $X$ is a Segal dendroidal space which is not necessarily complete, the covariant model structure on dendroidal spaces over $X$ is Quillen equivalent to the covariant model structure on dendroidal sets over $\widehat{X}_0$, where $\widehat{X}$ is a (Rezk) completion of $X$. The corollary now follows from Theorem \ref{thm:alg} and the observation that $w^*(P)$ is equivalent to $\widehat{NP}_0$ (see \cite{Cisinski-Moerdijk3}).
\end{proof}

\section{$\Gamma$-spaces and dendroidal spaces}\label{sec:gamma-vs-dend}

We let $\FF$ denote the category of finite sets and partial maps. (That is, pairs $(A, f)$ where $A$ is a finite set and $f$ is a function $A^{\prime} \to B$ with $A^{\prime} \subset A$.) This category is isomorphic to the category of \emph{finite pointed sets} and pointed maps, by omission of the base point. We will refer to a covariant functor from $\FF$ to simplicial sets as a $\Gamma$-space (the category $\Gamma$ is by definition the opposite of $\FF$.) The category of all such will be denoted $$\GammaSp \,.$$
This category was introduced by Segal \cite{Segal}. In addition, we rely on \cite{Bousfield-Friedlander}. However, these authors work with functors into \emph{pointed} simplicial sets. We will comment on the inessential difference in the next section.

\quad The objects of $\FF$ will typically be denoted by capital letters $A, B, C, \dots$ and a morphism from $A$ to $B$ will be written as $A \pto B$ to emphasize partiality. Sometimes we will tacitly just work with a skeleton of $\FF$, whose objects are $\underline{n} = \{1, \dots, n\}$, $n \geq 0$. The action of a morphism $\alpha : A \pto B$ on a $\Gamma$-space is denoted $\alpha_* : X(A) \to X(B)$.

\medskip
Let us write
\[
\lambda : \Omega \to \Gamma
\]
for the functor which assigns to a tree $T$ its \emph{set of leaves} $\lambda(T)$. Its effect on a morphism $\alpha : S \to T$ is the partial function $\lambda(\alpha) : \lambda(T) \pto \lambda(S)$ defined as follows: for leaves $d \in \lambda(S)$ and $e \in \lambda(T)$ one sets $\lambda(\alpha)(e) = d$ if the path in $T$ from $e$ down to the root passes through $\alpha(d)$; and $\lambda(\alpha)(e)$ is undefined otherwise (this happens if that path misses the image of $\alpha$ entirely). This is indeed a (partial) function since for a pair of leaves $a, b \in \lambda(S)$ a path in $T$ from a leaf down to the root intersects \emph{at most one} of the edges $\alpha(a)$ and $\alpha(b)$. For the effect of this functor on generating morphisms of $\Omega$, note that $\lambda$ sends isomorphisms, degeneracies and inner face maps all to isomorphisms, top faces to total maps, but root faces to possibly partial maps.

\medskip
The following well-known definitions are due to Segal.
\begin{defn}
A $\Gamma$-space $X$ is \emph{special} if 
\begin{enumerate}
\item[$(i)$] The map $X(\varnothing) \to \Delta^0$ is weak equivalence.
\item[$(ii)$] For any two finite sets $A$ and $B$, the map $X(A \amalg B) \to X(A) \times X(B)$ induced by the inert surjections $A \pgets A \amalg B \pto B$ is a weak equivalence.
\end{enumerate}
Moreover, $X$ is \emph{very special} if it is special and the map
\[
X(\underline{2}) \to X(\underline{1}) \times X(\underline{1})
\]
induced by the total map $\underline{2} \to \underline{1}$ and one of the two inert surjections $\underline{2} \pto \underline{1}$, is a weak equivalence.
\end{defn}

By the \emph{special (or very special projective) $\Gamma$-space model structure} we mean the left Bousfield localisation of the projective model structure on $\GammaSp$ wherein the fibrant objects are the projectively fibrant $\Gamma$-spaces which are special (or very special, respectively). There are also Quillen equivalent special and very special Reedy $\Gamma$-space model structures which we will exploit in the proof of our Barratt-Priddy-Quillen theorem.

Given a $\Gamma$-space $X$, we let $\lambda^*(X)$ be the dendroidal space obtained by precomposition with $\lambda$. Then $\lambda^*$ has a left adjoint $\lambda_!$ which is uniquely characterised by the equation $\lambda_!(\Omega[T] \otimes K) = \FF(\lambda(T),-) \otimes K$, for any tree $T$ and space $K$.

\begin{thm}\label{thm:gamma}
The adjunction
\[
\lambda_! : \dSp \leftrightarrows \GammaSp : \lambda^*
\]
is a Quillen equivalence between the covariant projective model structure on $\dSp$ (over a point) and the special projective $\Gamma$-space model structure.
\end{thm}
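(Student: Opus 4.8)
The plan is to verify that $(\lambda_!, \lambda^*)$ is a Quillen pair for the localised structures, to show that $\RR\lambda^*$ reflects weak equivalences between fibrant objects, and then to show that the derived unit is a covariant weak equivalence; these three facts together give the Quillen equivalence. For the Quillen pair, note first that $\lambda_!$ carries the projective generating (trivial) cofibrations $\Omega[T] \otimes A \to \Omega[T] \otimes B$ to $\FF(\lambda(T),-) \otimes A \to \FF(\lambda(T),-) \otimes B$, which are projective generating (trivial) cofibrations of $\GammaSp$, so that $(\lambda_!, \lambda^*)$ is a Quillen pair for the projective structures. To descend to the localisations it suffices to check that $\lambda^*$ sends special $\Gamma$-spaces to covariant fibrations over a point. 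This is immediate from specialness: if $Y$ is special then for each tree $T$ the map $(\lambda^* Y)_T = Y(\lambda(T)) \xrightarrow{\sim} \prod_{\ell \in \lambda(T)} Y(\underline 1) = \map(\lambda(T), (\lambda^* Y)_\eta)$ is a weak equivalence, which is precisely the homotopy cartesian condition \eqref{eq:covfib} over a point (and in particular forces $\lambda^* Y$ to be a Segal dendroidal space). Hence $\lambda_!$ sends both the Segal-core inclusions and the leaf-selecting maps \eqref{eq:tophorn} to special weak equivalences, and the adjunction descends to a Quillen pair between the covariant projective and the special projective structures.

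Next I would observe that $\RR\lambda^*$ reflects weak equivalences between fibrant objects. A map $Y \to Y'$ of special $\Gamma$-spaces is a special weak equivalence exactly when it is a levelwise weak equivalence, which by specialness holds if and only if $Y(\underline 1) \to Y'(\underline 1)$ is a weak equivalence. On the other side, $\lambda^* Y$ and $\lambda^* Y'$ are covariant fibrations over a point, so by Lemma \ref{lem:covwe} the map $\lambda^* Y \to \lambda^* Y'$ is a covariant weak equivalence if and only if it is one on colours, i.e.\ at $\eta$, which is again $Y(\underline 1) \to Y'(\underline 1)$. Thus $\lambda^*$ detects weak equivalences between fibrant objects.

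The heart of the argument is to show that the derived unit $X \to \RR\lambda^* \LL\lambda_! X$ is a covariant weak equivalence for every cofibrant $X$. The hypotheses of Lemma \ref{lem:reduction} hold: $\RR\lambda^*$ is computed levelwise and commutes with geometric realisation, and the realisation of a degreewise special simplicial $\Gamma$-space is special, since finite products and weak equivalences are preserved by the diagonal. Using the canonical resolution of a cofibrant object by coproducts of objects $\Omega[T] \otimes \Delta^n$, Lemma \ref{lem:reduction} reduces us to these generators; contracting $\Delta^n$ and splitting the coproduct (both harmless for covariant weak equivalences) leaves $X = \Omega[T]$, and since the leaf inclusion $\coprod_{\lambda(T)} \eta \to \Omega[T]$ is a covariant weak equivalence while $\RR\lambda^* \LL\lambda_!$ preserves covariant weak equivalences between cofibrant objects, naturality of the unit and two-out-of-three reduce us to the single object $X = \Omega[\eta] = \eta$. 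Now $\LL\lambda_! \eta = \lambda_! \Omega[\eta] = \FF(\underline 1, -) = \Gamma(-, \underline 1)$, and by the special Barratt--Priddy--Quillen theorem (Theorem \ref{thm:introBPQ}) its fibrant replacement in the special structure is $B\Sigma^{\underline 1} = B\Sigma$; hence $\RR\lambda^* \LL\lambda_! \eta \simeq \lambda^* B\Sigma$, and the derived unit is the canonical map $\eta \to \lambda^* B\Sigma$ selecting the generator $\sigma : \Delta^0 \to (\lambda^* B\Sigma)_\eta = B\Sigma(\underline 1)$.

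To see that this map is a covariant weak equivalence I would apply the Yoneda Lemma \ref{lem:yon} with base $B = \Delta^0$, object $X = \lambda^* B\Sigma$ and $U = \Delta^0$; it then remains only to check that $\sigma$ is initial in $\lambda^* B\Sigma$, i.e.\ that $\pi : \sigma/(\lambda^* B\Sigma) \to \lambda^* B\Sigma$ is a levelwise weak equivalence. This is a direct computation: since $B\Sigma$ is special, $\lambda^* B\Sigma$ is a covariant fibration, so each homotopy pullback $Z_{T^\sharp}$ of Definition \ref{defn:underoperad} is the contractible homotopy fibre of the equivalence $(\lambda^* B\Sigma)_{T^\sharp} \xrightarrow{\sim} (\lambda^* B\Sigma)_\eta^{\lambda(T^\sharp)}$ over the constant labelling at $\sigma$; consequently $(\sigma/(\lambda^* B\Sigma))_T \simeq \hocolim{C(T)} \Delta^0 = N C(T) \simeq (N\Sigma)^{\lambda(T)} = B\Sigma(\lambda(T)) = (\lambda^* B\Sigma)_T$, and this equivalence is induced by $\pi$. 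Hence $\sigma$ is initial, Lemma \ref{lem:yon} gives that $\eta \to \lambda^* B\Sigma$ is a covariant weak equivalence, and the derived unit is an equivalence for all cofibrant $X$. Combined with the detection property of $\RR\lambda^*$ from the second paragraph, the triangle identities force the derived counit to be an equivalence as well, so $(\lambda_!, \lambda^*)$ is a Quillen equivalence. The genuinely substantive inputs are the special Barratt--Priddy--Quillen theorem, which pins down the fibrant replacement of $\Gamma(-, \underline 1)$ as $B\Sigma$, together with the initiality computation for $\sigma$; everything else is formal, and I expect the bookkeeping in the reduction to $X = \eta$ (justifying the hypotheses of Lemma \ref{lem:reduction}) to be the only place requiring real care.
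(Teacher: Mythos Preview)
Your overall strategy matches the paper's: establish the Quillen pair, note that $\lambda^*$ is conservative, and prove the derived unit is a covariant weak equivalence by reducing via Lemma~\ref{lem:reduction} to generators and then invoking Barratt--Priddy--Quillen together with the Yoneda Lemma. The initiality computation you give for $\sigma$ in $\lambda^* B\Sigma$ is correct and in fact spells out a step the paper leaves implicit.

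There is, however, a genuine gap at the step ``splitting the coproduct (harmless for covariant weak equivalences)''. The resolution used in Lemma~\ref{lem:reduction} produces in each simplicial degree a \emph{coproduct} $\coprod_i \Omega[T_i]\otimes\Delta^{n_i}$, and you must show the derived unit is an equivalence for such a coproduct. Your reduction to a single $\Omega[T]$, and then via the leaf inclusion to a single $\eta$, tacitly assumes that $\RR\lambda^*\LL\lambda_!$ commutes with coproducts. It does not: for $L=\lambda(T_1)\amalg\lambda(T_2)$ one has $\LL\lambda_!(\Omega[T_1]\amalg\Omega[T_2])\simeq \FF(\lambda(T_1),-)\amalg\FF(\lambda(T_2),-)$, whose special fibrant replacement is $B\Sigma^{L}$, whereas the coproduct of the individual replacements is $B\Sigma^{\lambda(T_1)}\amalg B\Sigma^{\lambda(T_2)}$, which is not even special (its value at $\varnothing$ is two points). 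So knowing the derived unit is an equivalence for $\eta$ alone does not give it for $\coprod_L\eta$ when $|L|\geq 2$.

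The fix is to stop the reduction at $\Omega[\eta]\otimes L$ for a finite set $L$, and use the Barratt--Priddy--Quillen theorem in its general form (Theorem~\ref{thm:SegalL}: $\FF(L,-)\to B\Sigma^L$ is a special trivial cofibration) rather than only the case $L=\underline 1$. Your Yoneda/initiality argument then goes through verbatim with $U=L$, $X=\lambda^* B\Sigma^L$, and $\sigma:L\to B\Sigma^L(\underline 1)$ picking the $L$ generators; the groupoid $C(T)\simeq\Sigma^{\lambda(T)}$ still matches $\Sigma^L(\lambda(T))$ fibrewise. This is exactly what the paper does in its cases (i) and (iii). For \emph{infinite} coproducts a further (easy) argument via filtered colimits of special $\Gamma$-spaces is needed, which you also omit.
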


\begin{proof}
The functors $\lambda_!$ and $\lambda^*$ form an adjoint pair for the projective model structures. The pair is also Quillen for the localised model structures since $\lambda^*$ clearly still preserves fibrant objects there. Moreover, the functor $\lambda^*$ is homotopy conservative, and so it suffices to verify that the derived unit map 
\[
Z \to \RR \lambda^* \LL \lambda_! Z
\]
is a covariant weak equivalence (over a point) for every projectively cofibrant dendroidal space $Z$. We may apply Lemma \ref{lem:reduction} to a resolution described just above this lemma to deduce that it is enough to prove that the derived unit map is a weak equivalence for $Z$ of the form $\Omega[T] \otimes \Delta^n$ or a coproduct of such objects. The conditions of the lemma are satisfied because $\lambda^*$ agrees with its derived functor (at the level of projective model structures) and it commutes with all colimits; moreover, since realization commutes with products, the geometric realisation of a simplicial object which is degreewise a special $\Gamma$-space is a special $\Gamma$-space.

We now proceed in three steps: $(i)$ $Z = \Omega[T]$, $(ii)$ $Z = \Omega[T] \otimes \Delta^n$, and $(iii)$ arbitrary coproducts of such. In case $(i)$, the unit map takes the form $\Omega[T] \to \RR \lambda^* \FF(\lambda(T), -)$. Writing $L$ for $\lambda(T)$, the target of this map is identified with
\[
\lambda^*(B\Sigma^L)
\]
by the Barratt-Priddy-Quillen theorem \ref{thm:SegalL}. Moreover, by definition, the map from $\Omega[\eta] \otimes L$ to $\Omega[T]$ selecting the leaves is a covariant weak equivalence. So the claim is that the map $$\Omega[\eta] \otimes L \to \lambda^*(B\Sigma^L)$$ is a covariant weak equivalence. This is a consequence of the Yoneda Lemma (Lemma \ref{lem:yon}).

Case $(ii)$ follows automatically from case $(i)$ since $\Omega[T] \otimes \Delta^n$ is weakly equivalent to $\Omega[T]$.

As for $(iii)$, the case of \emph{finite} coproducts is straightforward. Indeed, let $Z$ be a $I$-indexed coproduct of objects of the form $\Omega[T_i]$, $i \in I$. Then the derived unit map is identified with
\[
\Omega[\eta] \otimes L \to \lambda^*(B\Sigma^L)
\]
where $L$ is the set $\coprod_{i \in I} \lambda(T_i)$. Again, this map is a weak equivalence by the Yoneda lemma. For an infinite set $I$, one can express it as a filtered colimit of its finite subsets and, using that $\lambda^*$ commutes with colimits and that a filtered colimit of special $\Gamma$-spaces is special, reduce the problem to the finite case.
\end{proof}

\begin{cor}[c.f. \cite{Segal}, \cite{May-Thomason} and \cite{Mandell}]
There is a Quillen equivalence between the special $\Gamma$-space model structure and the model category of $E_{\infty}$-spaces.
\end{cor}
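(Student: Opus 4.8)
The plan is to assemble the corollary from the three comparison results already at our disposal: the equivalence with special $\Gamma$-spaces (Theorem \ref{thm:gamma}), the equivalence with algebras over an operad (Theorem \ref{thm:alg}), and the invariance theorem (Theorem \ref{thm:invariance}). The conceptual bridge is that an $E_\infty$-space is by definition an algebra over a $\Sigma$-free $E_\infty$-operad, and that the nerve of such an operad is a Segal dendroidal space which is Dwyer-Kan equivalent to the terminal object $*$ (the nerve of the commutative operad $\mathsf{Comm}$). Note that $\mathsf{Comm}$ itself is not $\Sigma$-free, which is precisely why we must pass to a resolution.

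First I would fix a one-coloured $E_\infty$-operad $P$ in simplicial sets, that is, a $\Sigma$-free cofibrant resolution of $\mathsf{Comm}$, so that $\mathsf{Alg}(P)$ is by definition the model category of $E_\infty$-spaces. Since $P$ is $\Sigma$-free, Theorem \ref{thm:alg} applies directly and yields a Quillen equivalence
\[
\mathsf{Alg}(P) \simeq (\dSp{/NP})_{P,cov} \; .
\]

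Next I would check that the augmentation $NP \to *$ induced by $P \to \mathsf{Comm}$ is a Dwyer-Kan equivalence of Segal dendroidal spaces. The nerve of an operad is automatically a Segal dendroidal space, and since both $P$ and $\mathsf{Comm}$ are one-coloured the map is trivially essentially surjective. For full faithfulness it suffices, by the defining square, to verify that $(NP)_{C_n} \to (NP)_{\eta}^{\times n+1}$ is a weak equivalence for each $n$; but $(NP)_{C_n} \simeq P(n)$ and $(NP)_{\eta} \simeq *$, so this reduces to the contractibility of the operation spaces $P(n)$, which is exactly the defining property of an $E_\infty$-operad. The Invariance Theorem (Theorem \ref{thm:invariance}) then produces a Quillen equivalence
\[
(\dSp{/NP})_{P,cov} \simeq (\dSp{/*})_{P,cov} = \dSp_{P,cov} \; .
\]

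Finally, Theorem \ref{thm:gamma} identifies $\dSp_{P,cov}$, the covariant projective model structure over a point, with the special projective $\Gamma$-space model structure. Concatenating the three equivalences gives the desired zig-zag of Quillen equivalences between $E_\infty$-spaces and special $\Gamma$-spaces. I do not expect a genuine obstacle: the only verification carrying content is that $NP \to *$ is a Dwyer-Kan equivalence, and this is immediate from the contractibility of the $P(n)$. Everything else amounts to lining up the projective model structures consistently across Theorems \ref{thm:alg}, \ref{thm:invariance} and \ref{thm:gamma}, and to recording that "$E_\infty$-space" is read as "$P$-algebra for a $\Sigma$-free $E_\infty$-operad $P$".
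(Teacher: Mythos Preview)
Your proof is correct and follows the same overall strategy as the paper: combine Theorem \ref{thm:alg} for a $\Sigma$-free $E_\infty$-operad $P$ with Theorem \ref{thm:gamma}, bridging via an equivalence of covariant model structures over $NP$ and over the point.

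The only difference is in how that bridge is justified. You verify that $NP \to *$ is a Dwyer-Kan equivalence and then invoke the Invariance Theorem (Theorem \ref{thm:invariance}). The paper instead observes that $NP \to *$ is already a \emph{trivial fibration} (each $(NP)_T$ is a product of contractible Kan complexes $P(n_v)$), so the induced Quillen pair between the slices is a Quillen equivalence for the elementary reason recorded in Remark 5.2, without appealing to the Invariance Theorem. Your route is perfectly valid but uses a heavier tool than necessary; the paper's shortcut exploits that for an $E_\infty$-operad the map $NP \to *$ is not merely a Dwyer-Kan equivalence but a levelwise trivial fibration.
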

\begin{proof}
Let $P$ be an $E_{\infty}$-operad, for example the Barratt-Eccles operad. Then $NP \to *$ is a trivial fibration. Hence, the (projective) covariant model structure on $\dSp/NP$ is Quillen equivalent to that of $\dSp$ itself. The result now follows by combining this observation with Theorems \ref{thm:gamma} and \ref{thm:alg}.
\end{proof}

Since $\lambda_!$ sends the localising maps for the stable model structure on $\dSp$ to the localising maps for the very special model structure, we obtain the following corollary.

\begin{cor}\label{cor:gamma-very}
The adjunction $(\lambda_!, \lambda^*)$ is a Quillen equivalence between the stable (projective) model structure on $\dSp$ and the very special (projective) $\Gamma$-space model structure.
\end{cor}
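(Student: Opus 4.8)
The plan is to obtain the corollary formally from Theorem~\ref{thm:gamma} together with the general principle that a left Bousfield localisation of a Quillen equivalence is again a Quillen equivalence, once the two sets of localising maps are matched up under the adjunction. I would first record the two localising sets explicitly. On the $\Gamma$-space side, the very special model structure is the localisation of the special one at the map
\[
\FF(\underline{1}, -) \amalg \FF(\underline{1}, -) \longrightarrow \FF(\underline{2}, -)
\]
whose two components are the maps of representables induced by the total map $\underline{2} \to \underline{1}$ and by one inert surjection $\underline{2} \pto \underline{1}$; by the Yoneda lemma, a projectively fibrant $\Gamma$-space $Y$ is local with respect to this map exactly when $Y(\underline{2}) \to Y(\underline{1}) \times Y(\underline{1})$ is a weak equivalence, i.e.\ exactly when $Y$ is very special. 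On the dendroidal side, the stable structure is the localisation of the covariant structure at the maps $\coprod \eta \hookrightarrow C_{k+1}$ selecting the root together with all but one of the leaves.

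The crux --- which is the content of the remark preceding the statement --- is to compute $\LL\lambda_!$ on these maps. Since the sources and targets are coproducts of representables of the form $\Omega[\eta]$ and $\Omega[C_n]$, which are normal and hence cofibrant, $\lambda_!$ already computes its left derived functor here, and one has $\lambda_!(\Omega[\eta]) = \FF(\underline{1}, -)$ and $\lambda_!(\Omega[C_n]) = \FF(\underline{n}, -)$ directly from the defining formula $\lambda_!(\Omega[T] \otimes K) = \FF(\lambda(T), -) \otimes K$. To match the components I would invoke the description of $\lambda$ on faces recalled earlier: the inclusion of the root edge of a corolla is a top face, which $\lambda$ carries to the total (fold) map, while the inclusion of a leaf is a root face, which $\lambda$ carries to the inert surjection projecting onto that leaf. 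Under this dictionary $\lambda_!$ sends the stable localising maps precisely to the very special localising maps, the components matching as (root inclusion $\mapsto$ fold, leaf inclusion $\mapsto$ projection).

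With the localising maps identified, the conclusion is formal. For a Quillen equivalence $F : M \leftrightarrows N : G$ and a set $S$ of maps in $M$, an object of $N$ is $\LL F(S)$-local if and only if its image under $\RR G$ is $S$-local, so $\RR G$ restricts to an equivalence between the homotopy categories of $L_S M$ and $L_{\LL F(S)} N$, and the pair descends to a Quillen equivalence between these localisations. Applying this with $F = \lambda_!$, with $S$ the stable localising maps and $\LL\lambda_!(S)$ the very special ones computed above, yields the corollary. Concretely, this amounts to the observation that $\lambda^*$ carries very special projectively fibrant $\Gamma$-spaces to stable-fibrant dendroidal spaces --- exactly the translation of conditions just made --- so that $(\lambda_!, \lambda^*)$ remains a Quillen pair, while the homotopy conservativity of $\lambda^*$ used in the proof of Theorem~\ref{thm:gamma} persists and upgrades it to a Quillen equivalence. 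The only real work is the bookkeeping of the middle paragraph; there is no new difficulty beyond Theorem~\ref{thm:gamma} itself.
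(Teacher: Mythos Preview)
Your proposal is correct and follows exactly the paper's approach: the paper's entire argument is the one sentence preceding the corollary, namely that $\lambda_!$ sends the stable localising maps to the very special localising maps, and you have simply unpacked this. Two minor quibbles that do not affect correctness: representables are projectively cofibrant on the nose (your ``normal hence cofibrant'' is a Reedy-side remark), and the inclusion of a leaf into a corolla is not an elementary root face in the strict sense, though your computation of $\lambda$ on it is right.
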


By composing the Quillen equivalences of Theorem \ref{thm:gamma} and Corollary \ref{cor:gamma-very} with the Quillen equivalences of Remark \ref{rem:dsets-cov} we also obtain the following corollary.

\begin{cor}
There is a zig-zag of Quillen equivalences between the covariant model structure on $\dS$ and the special model structure on $\GammaSp$, which localises to a further Quillen equivalence between the stable model structure on $\dS$ and the very special model structure on $\GammaSp$.
\end{cor}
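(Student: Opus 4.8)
The plan is to compose three Quillen equivalences that are already in hand, so I would present this as a concatenation rather than as a fresh argument. The starting point is Remark \ref{rem:dsets-cov}: the left Quillen equivalences $\dS_{oper} \xrightarrow{\sim} \dSp_{RSC}$ recorded there localise, giving in particular a Quillen equivalence between $\dS_{cov}$ and the Reedy covariant structure $\dSp_{R,cov}$ on dendroidal spaces over a point, together with its further localisation relating $\dS_{stable}$ to $\dSp_{R,stable}$.

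Next I would move from the Reedy to the projective covariant structure. The identity functors furnish a Quillen equivalence $\dSp_{P,cov} \leftrightarrows \dSp_{R,cov}$ --- the covariant instance of the projective-versus-Reedy equivalence from Section \ref{sec:review}, appearing as a vertical arrow in diagram (\ref{eq:plong}) --- and similarly for the stable versions. Finally, Theorem \ref{thm:gamma} gives a Quillen equivalence $\lambda_! : \dSp_{P,cov} \leftrightarrows \GammaSp$ for the special projective $\Gamma$-space structure, and Corollary \ref{cor:gamma-very} its very special counterpart. Splicing these together yields the zig-zag
\[
\dS_{cov} \simeq \dSp_{R,cov} \simeq \dSp_{P,cov} \simeq \GammaSp
\]
into the special model structure, and the analogous one for the stable and very special structures.

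The only point that deserves attention --- and the one I would check explicitly --- is that each link of the zig-zag is compatible with the additional localisation, so that the special zig-zag genuinely localises to the very special one rather than merely running alongside it. This is automatic here, since every model structure involved is a left Bousfield localisation of a common base and the localising maps match up under the adjunctions; for the final link this is exactly the observation preceding Corollary \ref{cor:gamma-very} that $\lambda_!$ carries the stable localising maps to the very special ones. Hence no further computation is required.
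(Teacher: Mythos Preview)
Your proposal is correct and follows essentially the same route as the paper, which simply instructs the reader to compose the Quillen equivalences of Theorem \ref{thm:gamma}, Corollary \ref{cor:gamma-very}, and Remark \ref{rem:dsets-cov}. You are merely more explicit about the Reedy-to-projective bridge from diagram (\ref{eq:plong}) and about the compatibility of the localisations, both of which the paper leaves implicit.
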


\section{Pointed and reduced $\RR$-spaces}

The point of this section is to show that the we get equivalent homotopy theories of special $\Gamma$-spaces, independently of whether we require the local objects $X$ to satisfy $X(\varnothing)$ contractible (as we do) or $X(\varnothing)$ a point.

More generally, let $\RR$ be a generalised Reedy category with a terminal object $t$, and assume $d(t) = 0$. Consider the category $\RR$-spaces of simplicial presheaves on $\RR$. An $\RR$-space $X$ is called \emph{pointed} if it takes values in pointed simplicial sets. This is equivalent to $X$ being equipped with a map $x_0 : \RR(-,t) \to X$, i.e. a vertex $x_0 \in X(t)$. The $\RR$-space $X$ is called \emph{reduced} if $X(t)$ is a point, and \emph{weakly reduced} if $X(t)$ is weakly contractible. 

We denote by 
\[
(\RRSp)_{wred}
\]
the left Bousfield localisation of the Reedy model structure whose fibrant objects are weakly reduced, Reedy fibrant $\RR$-spaces. It is given by the localising cofibrations of the form
\[
\RR(-,t) \otimes \partial \Delta[n] \hookrightarrow \RR(-,t) \otimes \Delta[n]
\]
for any $n \geq 0$. We denote by 
\[
\RRSp_{*}
\]
the category of pointed $\RR$-spaces. It is the slice category $\RR(-,t) / \RRSp$ and it inherits a model structure from the Reedy model structure on $\RRSp$, for which the forgetful functor
\begin{equation}\label{eq:pointed-forgt}
\RRSp_{*} \to \RRSp
\end{equation}
preserves and detects fibrations, cofibrations and weak equivalences. The cofibrant objects are the objects $(X,x_0)$ for which $x_0 : \RR(-,t) \to X$ is a cofibration, i.e. the "well-pointed" cofibrant $\RR$-spaces.

\begin{prop}
The forgetful functor in (\ref{eq:pointed-forgt}) induces a right Quillen equivalence
\[
(\RRSp_*)_{wred} \to (\RRSp)_{wred}
\]
\end{prop}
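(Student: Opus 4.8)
The plan is to realise the forgetful functor as the right adjoint $U$ of an adjunction $(-)_+ \dashv U$, where $(-)_+ : \RRSp \to \RRSp_*$ adds a disjoint basepoint, $A \mapsto A_+ := A \amalg \RR(-,t)$; note $\RR(-,t) = *$ is the terminal object of $\RRSp$ because $t$ is terminal in $\RR$. Since the pointed Reedy model structure on $\RRSp_*$ is created by $U$ (so $U$ preserves and reflects Reedy weak equivalences, cofibrations and fibrations), the pair $((-)_+, U)$ is a Quillen pair for the Reedy structures. To see that it remains a Quillen pair after passing to the weakly reduced localisations, I would invoke the standard criterion for a Quillen pair to descend to left Bousfield localisations: it suffices that $U$ carry local-fibrant objects to local-fibrant objects. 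This holds, because $U$ of a weakly reduced, Reedy-fibrant pointed object is a weakly reduced, Reedy-fibrant object, i.e.\ exactly a fibrant object of $(\RRSp)_{wred}$. I would then establish the Quillen equivalence through the criterion that $U$ reflects weak equivalences between fibrant objects and that the derived unit is a weak equivalence on every cofibrant object.

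The key step, and the main obstacle, is to show that $U$ preserves and reflects local weak equivalences. The mechanism is a comparison of pointed and unpointed derived mapping spaces into local objects. For a weakly reduced, Reedy-fibrant pointed object $(Z,z_0)$ and a cofibrant $Y$, the pointed derived mapping space $\RR\map_*((Y,y_0),(Z,z_0))$ is the homotopy fibre, over $z_0$, of the restriction map $\RR\map(Y,Z) \to \map(\RR(-,t),Z) = Z(t)$ along the basepoint $y_0 : \RR(-,t) \to Y$; since $(Z,z_0)$ is local, $Z(t)$ is weakly contractible, so this homotopy fibre is weakly equivalent to $\RR\map(Y,Z) = \RR\map(UY,UZ)$. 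Conversely, every fibrant object of $(\RRSp)_{wred}$ has weakly contractible, hence nonempty, value at $t$, so it is $U$ of some pointed local object. Combining these two observations, and reducing an arbitrary map to one between cofibrant objects by cofibrant replacement (which $U$ sends to objectwise, hence local, weak equivalences), would show that a map $f$ in $\RRSp_*$ is a local weak equivalence if and only if $Uf$ is. The care needed here lies precisely in the bookkeeping of the coslice model structure, the homotopy invariance of the mapping spaces involved, and the pointing of local objects.

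With this in hand the remaining points are quick. Reflection of weak equivalences between fibrant objects is a special case of the previous paragraph. For the derived unit at a cofibrant $A$, I would factor it as $A \to U(A_+) \to U R_*(A_+)$, where $R_*$ denotes fibrant replacement in $(\RRSp_*)_{wred}$. The second map is $U$ applied to a local weak equivalence, hence a local weak equivalence by the key step. For the first map, observe that $U(A_+) = A \amalg \RR(-,t)$ is cofibrant, being $U$ of the cofibrant object $A_+$, so for any fibrant $Z'$ of $(\RRSp)_{wred}$ one has $\RR\map(A \amalg \RR(-,t), Z') \simeq \RR\map(A,Z') \times Z'(t)$, with the restriction along $A \to A_+$ the projection; this is a weak equivalence because $Z'(t)$ is weakly contractible. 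Thus the derived unit is a local weak equivalence, and by the criterion recalled above the pair $((-)_+, U)$ is a Quillen equivalence, i.e.\ $U$ is a right Quillen equivalence.
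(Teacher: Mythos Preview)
Your proof is correct and uses the same adjunction $(-)_+ \dashv U$ as the paper, but the argument for why it is a Quillen equivalence differs. The paper observes that $\RRSp_* = \RR(-,t)/\mathcal{E}$ and $\RRSp = \varnothing/\mathcal{E}$ for $\mathcal{E} = (\RRSp)_{wred}$, and that the unique map $\varnothing \to \RR(-,t)$ is one of the localising cofibrations (the case $n=0$), hence a weak equivalence between cofibrant objects in $\mathcal{E}$. It then invokes the general fact that for any weak equivalence $f : X \to Y$ between cofibrant objects (or any weak equivalence in a left proper model category), the induced Quillen pair $f_! : X/\mathcal{E} \rightleftarrows Y/\mathcal{E} : f^*$ is a Quillen equivalence. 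That finishes the proof in one stroke.

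Your route instead verifies the Quillen equivalence directly, by comparing pointed and unpointed derived mapping spaces into local objects and checking the derived unit. This is perfectly valid, and in fact your computation that $A \to A \amalg \RR(-,t)$ is a local weak equivalence is essentially the cobase-change of the paper's key observation that $\varnothing \to \RR(-,t)$ is a local trivial cofibration. The paper's approach is shorter and more conceptual, isolating a reusable principle about coslices; yours is more explicit and self-contained, avoiding the appeal to an external lemma at the cost of some bookkeeping with mapping spaces and cofibrant replacements.
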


\begin{proof}
Any morphism $f : X \to Y$ in a model category $\mathcal{E}$ induces a Quillen pair
\[
f_! : X/\mathcal{E} \leftrightarrows Y/\mathcal{E}  : f^*
\]
where $f^*$ is given by composition and $f_!$ by pushout. If $f: X \to Y$ is a weak equivalence between cofibrant objects (or just a weak equivalence, in case $\mathcal{E}$ is left proper), this Quillen pair is a Quillen equivalence as one easily checks. This proposition is just a special case, where $\mathcal{E}$ is $(\RRSp)_{wred}$ and $X = \varnothing$ is the initial object while $Y = \RR(-,t)$.
\end{proof}

Now consider the functors (defined below)
\[
\rho_! : \RRSp_{*} \leftrightarrows (\RRSp)_{red} : \rho^*
\]
between pointed $\RR$-spaces and reduced $\RR$-spaces. The functor $\rho^*$ is simply the forgetful functor. Its left adjoint $\rho_!$ is defined for a pointed $\RR$-space $(X,x_0)$ and any object $r$ in $\RR$ by the pushout of simplicial sets
\begin{equation}\label{eq:red-pushout}
	\begin{tikzpicture}[descr/.style={fill=white}, baseline=(current bounding box.base)] ] 
	\matrix(m)[matrix of math nodes, row sep=2.5em, column sep=2.5em, 
	text height=1.5ex, text depth=0.25ex] 
	{
	X(t) & X(r) \\
	\Delta[0] & \rho_!(X)(r)\\
	}; 
	\path[->,font=\scriptsize] 
		(m-1-1) edge node [auto] {$\pi_r^*$} (m-1-2)
		(m-2-1) edge node [auto] {$$} (m-2-2)
		(m-1-1) edge node [auto] {$$} (m-2-1)
		(m-1-2) edge node [auto] {$$} (m-2-2);
	\end{tikzpicture} 
\end{equation}
where $\pi_r$ is the unique map to the terminal object $t$.

Incidentally, if $t$ is a \emph{zero-object}, any $r$ also admits a unique section $\tau_r : t \to r$ of $\pi_r$, and the functor $\rho^*$ also has a right adjoint $\rho_*$, defined analogously as a pullback of $\tau_r^*$ and $x_0$.

\begin{prop}\label{prop:pointed-vs-red}
The Reedy model structure on pointed $\RR$-spaces induces a model structure on reduced $\RR$-spaces for which the functors $\rho^*$ detects fibrations and weak equivalences; in particular, it makes the adjoint pair $\rho_!$ and $\rho^*$ into a Quillen pair.
\end{prop}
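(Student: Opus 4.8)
The natural framework is Kan's transfer theorem, applied to the adjunction $\rho_! \dashv \rho^*$ with the Reedy model structure on $\RRSp_*$ as the source. Concretely, I would declare a map $f$ of reduced $\RR$-spaces to be a weak equivalence (respectively a fibration) precisely when $\rho^* f$ is one in $\RRSp_*$, and take the cofibrations to be those maps with the left lifting property against the trivial fibrations so defined. The assertion that $(\rho_!, \rho^*)$ is a Quillen pair is then automatic: since $\rho^*$ detects, and hence preserves, fibrations and trivial fibrations, it is right Quillen. The preliminary checks are routine. The category $(\RRSp)_{red}$ is a reflective subcategory of $\RRSp_*$, with limits computed as in $\RRSp$ (these remain reduced because $t$ is terminal) and colimits computed by applying the reflector $\rho_!$; it is therefore bicomplete and locally presentable, so the small object argument is available, with generating cofibrations and trivial cofibrations $\rho_!(I)$ and $\rho_!(J)$, where $I$ and $J$ generate the pointed Reedy structure.

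The substance of the proof is the acyclicity condition: every relative $\rho_!(J)$-cell complex must be carried by $\rho^*$ to a weak equivalence. I would first reduce a single cell attachment to a statement about the unit of the adjunction. Because $\rho_!$ is a left adjoint and the counit $\rho_! \rho^* \to \mathrm{id}$ is an isomorphism (the inclusion $\rho^*$ being fully faithful), a pushout of $\rho_!(j)$, for $j : A \to B$ in $J$, along a map $\rho_! A \to C$ is identified with $\rho_!(j')$, where $j' : \rho^* C \to B'$ is the pushout of $j$ along the adjunct $A \to \rho^* C$ formed in $\RRSp_*$. In particular $j'$ is again a trivial cofibration, and by naturality of the unit one checks that $\rho^*$ of the cell attachment factors as $j'$ followed by the unit $\eta_{B'} : B' \to \rho^* \rho_! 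B'$. Since $j'$ is a weak equivalence, the whole attachment is a weak equivalence once $\eta_{B'}$ is; a transfinite composition argument, using that weak equivalences of simplicial sets are stable under the filtered colimits occurring in a cell complex, then upgrades this to arbitrary relative $\rho_!(J)$-cell complexes.

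Everything therefore comes down to the unit map $\eta_B : B \to \rho^* \rho_! B$, whose value at an object $r$ is the collapse $B(r) \to B(r) \cup_{B(t)} \Delta[0]$ of the image of $B(t)$, and this is where the real work lies. Since the generating trivial cofibrations of the pointed Reedy structure are objectwise trivial cofibrations, the relevant $B = B'$ has contractible $t$-value: indeed $B'(t) = B(t) \cup_{A(t)} \ast$ is a homotopy pushout of the weak equivalence $A(t) \to B(t)$ against $A(t) \to \ast$, hence contractible, so $\eta_{B'}$ is a weak equivalence at $t$. The main obstacle is to show it is a weak equivalence at every other $r$, i.e. that collapsing the contractible space $B(t)$ inside $B(r)$ preserves the homotopy type. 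I expect to deduce this from left properness of simplicial sets, together with the fact that, for the cells at hand and because $t$ has degree $0$ (so it is a minimal object with trivial latching data), the structure maps $B(t) \to B(r)$ may be arranged to be monomorphisms, exhibiting the collapse as a pushout of the weak equivalence $B(t) \to \Delta[0]$ along a cofibration. Verifying carefully that the reduction functor collapses the contractible $t$-value through a genuine homotopy pushout is the crux; should the monomorphism property prove awkward to establish directly, an alternative would be to route the comparison through the already-constructed weakly reduced localisation $(\RRSp)_{wred}$, in which the contractibility of $B(t)$ is exactly the local condition.
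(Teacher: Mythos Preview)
Your framework is the paper's: both proofs invoke Kan's transfer theorem along $\rho_! \dashv \rho^*$. The difference is that the paper's verification of the acyclicity condition is far more direct, and it bypasses exactly the point you flag as the crux.

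The paper's key observation is that $\rho^*$ \emph{commutes with all colimits}. This holds because reduced $\RR$-spaces are closed under colimits inside pointed $\RR$-spaces: colimits are computed objectwise in pointed simplicial sets, and a colimit of one-point spaces is a one-point space. (You note that colimits in the reduced category are obtained by applying the reflector, but you do not observe that here the reflector does nothing.) Once $\rho^*$ preserves colimits, the acyclicity condition reduces immediately to checking that $\rho^*\rho_!(j)$ is a weak equivalence for each \emph{generating} trivial cofibration $j$; there is no need to analyse arbitrary pushouts $B'$ or to worry about transfinite compositions separately.

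The paper then dispatches the generators $j : X \amalg U \to X \amalg V$ (with $X = \RR(-,t)$ and $U \to V$ the Reedy generator at an object $r$) by a two-line case split on $d(r)$. If $d(r) > 0$, then every map $t \to r$ factors through a properly positive morphism, so $\partial\RR(t,r) = \RR(t,r)$ and hence $U(t) \to V(t)$ is an isomorphism; thus $\rho_!(j)$ is objectwise a pushout of $j$ and remains a trivial cofibration. If $d(r) = 0$, then $r \cong t$, the presheaves $U$ and $V$ are constant, and $\rho_!(X \amalg U) = \rho_!(X \amalg V) = X$, so $\rho_!(j)$ is an isomorphism.

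Your route via the unit $\eta_{B'}$ is not wrong in spirit, but it leaves real work undone: you yourself identify the injectivity of $B'(t) \to B'(r)$ as the crux and leave it unresolved, and your handling of transfinite compositions presumes that $\rho^*$ of each cell attachment is an objectwise monomorphism, which you have not established. Both difficulties evaporate once you use that $\rho^*$ preserves colimits and compute $\rho_!$ on the generators directly. The injectivity of $\pi_r^*$ that you are reaching for is \emph{not} assumed in this proposition; it enters only later, under the additional hypothesis of a projective terminal object, in the proof that $(\rho_!,\rho^*)$ is a Quillen \emph{equivalence}.
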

\begin{proof}
This is an instance of the usual transfer along the adjoint pair $\rho_!$ and $\rho^*$. The transfer conditions are fulfilled because $\rho^*$ commutes with colimits, and $\rho^* \rho_!$ maps the generating trivial cofibrations in $\RRSp_{*}$ to weak equivalences. Indeed, these generating trivial cofibrations are of the form $X \amalg U \to X \amalg V$ where $X = \RR(-,t)$ and $U \to V$ is
\begin{equation}\label{eq:UtoV}
\RR(-,r) \otimes \Lambda^k[n] \cup \partial \RR(-,r) \otimes \Delta[n] \to \RR(-,r) \otimes \Delta[n]
\end{equation}
If $d(r) > 0$, then $U(t) \to V(t)$ is an isomorphism so $\rho^* \rho_! U \to \rho^* \rho_! V$ is a pushout of $U \to V$. And if $d(r) = 0$ then $r$ is isomorphic to $t$, in which case $\pi_s^* : U(t) \to U(s)$ is an isomorphism for any $s$ in $\RR$, whence $\rho^* \rho_!(X \amalg U) = X$. The same applies to $V$. In other words, for $r = t$ the functor $\rho^* \rho_!$ maps (\ref{eq:UtoV}) to an isomorphism.
\end{proof}

Let us say that $\RR$ has a \emph{projective} terminal object $t$ if for every object $r$ in $\RR$ the unique morphism $r \to t$ has a section. For example, this is the case if $t$ is a zero object as in the category $\Gamma$, or if every morphism in $\RR^-$ has a section as in Lemma \ref{lem:reedy-cof}. This condition implies that for every presheaf $X$ of (simplicial) sets on $\RR$, the map $\pi_r^* : X(t) \to X(r)$ is injective.

\begin{prop}
If $\RR$ has a projective terminal object, then the Quillen pair $\rho_!$ and $\rho^*$ of Proposition \ref{prop:pointed-vs-red} restricts to a Quillen equivalence
\[
(\RRSp_*)_{wred} \leftrightarrows (\RRSp)_{red}
\]
between weakly reduced pointed $\RR$-spaces and reduced $\RR$-spaces.
\end{prop}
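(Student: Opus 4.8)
The plan is to apply the standard recognition criterion for a Quillen equivalence in a form adapted to the situation. Since $\rho^*$ is the forgetful functor from reduced to pointed $\RR$-spaces, it is fully faithful and the counit $\rho_!\rho^*\to\mathrm{id}$ is an isomorphism; moreover, by Proposition~\ref{prop:pointed-vs-red} the model structure on reduced $\RR$-spaces is transferred along $\rho^*$, so $\rho^*$ both preserves and reflects weak equivalences. It therefore suffices to check two things: that the Quillen pair of Proposition~\ref{prop:pointed-vs-red} descends to the localisation $(\RRSp_*)_{wred}$, and that for every cofibrant $X$ in $(\RRSp_*)_{wred}$ the derived unit $X\to\RR\rho^*\LL\rho_!X$ is a weak equivalence. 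Reflection of weak equivalences between fibrant objects, the other half of the criterion, is then automatic: if $Y,Y'$ are reduced and Reedy fibrant, then $\rho^*Y,\rho^*Y'$ are weakly reduced and Reedy fibrant, hence fibrant in $(\RRSp_*)_{wred}$, so a map inducing an $(\RRSp_*)_{wred}$-equivalence between them is an objectwise equivalence, which by transfer means it was already a weak equivalence in $(\RRSp)_{red}$.

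For the descent I would use the adjoint form of the localisation criterion: the pair remains Quillen for the localised source exactly when $\rho^*$ carries fibrant objects of $(\RRSp)_{red}$ to local objects of $(\RRSp_*)_{wred}$. This is immediate, since a reduced, Reedy-fibrant $Y$ has $\rho^*Y$ reduced (in particular weakly reduced) and Reedy fibrant, which is precisely a fibrant object of $(\RRSp_*)_{wred}$.

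The heart of the argument is the derived unit. Because $X$ is cofibrant we have $\LL\rho_!X=\rho_!X$, and because $\rho^*$ preserves all weak equivalences the derived unit is represented, up to weak equivalence, by the ordinary unit $\eta_X\colon X\to\rho_!X$. Using the adjunction between $\RR(-,t)\otimes(-)$ and evaluation at $t$, together with the fact that the defining pushout~(\ref{eq:red-pushout}) is computed objectwise, I would identify $\rho_!X$ with the pushout in $\RRSp$ of the span
\[
\RR(-,t) \longleftarrow \RR(-,t)\otimes X(t) \longrightarrow X,
\]
where the left-hand arrow is $\RR(-,t)$ tensored with the collapse $X(t)\to\Delta[0]$ and the right-hand arrow $c$ has value $\pi_r^*\colon X(t)\to X(r)$ at each $r$; the unit $\eta_X$ is then the pushout of the left-hand arrow along $c$. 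Two observations now finish the proof. First, the left-hand arrow is an $(\RRSp)_{wred}$-weak equivalence: since $X$ is pointed, $X(t)$ is nonempty, so the basepoint gives a monomorphism $\Delta[0]\hookrightarrow X(t)$, and the functor $\RR(-,t)\otimes(-)$ sends every monomorphism to a local trivial cofibration (all monomorphisms are cellular with respect to the maps $\partial\Delta[n]\hookrightarrow\Delta[n]$, which this functor carries to the generating localising maps); as the collapse followed by the basepoint is the identity, two-out-of-three makes the collapse a local weak equivalence. Second, $c$ is a Reedy cofibration: it is an objectwise split monomorphism because the projective-terminal hypothesis provides sections of each $\pi_r$, and it is normal because the image $\pi_r^*(X(t))$ is fixed pointwise by $Aut(r)$ (as $\pi_r\theta=\pi_r$ for every $\theta$), while $X$, being cofibrant and hence normal, has free $Aut(r)$-action on the complement. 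By left properness of the localised Reedy structure, the pushout $\eta_X$ of a weak equivalence along a cofibration is a weak equivalence; this transfers to the pointed category because the forgetful functor preserves and reflects both weak equivalences and the localising maps.

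I expect the main obstacle to be the verification that $c$ is a normal (Reedy) cofibration, where one must combine the projective-terminal hypothesis with the normality of cofibrant objects and the pointwise $Aut(r)$-fixedness of the image $\pi_r^*(X(t))$. A secondary point requiring care is the bookkeeping between pointed and unpointed $\RR$-spaces, which is controlled throughout by the fact that the forgetful functor preserves and detects cofibrations and weak equivalences.
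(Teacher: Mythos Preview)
Your overall architecture (descent to the localisation, then verify the derived unit, using that $\rho^*$ detects weak equivalences) matches the paper's. The difference lies in how you handle the unit. The paper first reduces to $X$ cofibrant \emph{and weakly reduced}: since $\overline{\rho}^*$ preserves all weak equivalences, the derived unit is represented by the ordinary unit, and by naturality one may replace a cofibrant $X$ by its fibrant replacement in $(\RRSp_*)_{wred}$. With $X(t)\simeq\Delta[0]$, the objectwise pushout~(\ref{eq:red-pushout}) then exhibits $X(r)\to\rho_!X(r)$ as the pushout of a weak equivalence $X(t)\to\Delta[0]$ along the monomorphism $\pi_r^*$ (this is the only place the projective-terminal hypothesis enters), and left properness of \emph{simplicial sets} finishes the argument.

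Your route avoids the reduction to weakly reduced $X$ and instead shows the unit is a \emph{local} weak equivalence for all cofibrant $X$, using left properness of the localised Reedy model structure; the price is the verification that $c\colon \RR(-,t)\otimes X(t)\to X$ is a Reedy cofibration. That verification is essentially correct, but your justification is imprecise in two places. First, ``normal'' is defined in the paper only under the additional hypotheses of Lemma~\ref{lem:reedy-cof}, which are not assumed here; you should argue directly with relative latching maps. Second, ``$X$ cofibrant'' in $\RRSp_*$ means that the pointing $x_0\colon \RR(-,t)\to X$ is a Reedy cofibration, not that $X$ is Reedy cofibrant as an unpointed object (at a degree-$0$ object $r$ with nontrivial $Aut(r)$, the base point $\pi_r^*(x_0)$ is allowed to be fixed). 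The correct argument computes the relative latching of $c$ at $r$: for $d(r)>0$ one has $deg(A)(r)\cong A(r)$ since $r\to t$ is terminal in $(r/\RR)^-$, so the relative latching reduces to $deg(X)(r)\to X(r)$, which is exactly the relative latching of $x_0$; for $d(r)=0$ the relative latching is $\pi_r^*\colon X(t)\to X(r)$, and freeness on the complement follows because that complement is an $Aut(r)$-stable subset of $X(r)\setminus\{\pi_r^*(x_0)\}$, on which $Aut(r)$ acts freely by cofibrancy of $(X,x_0)$. With these corrections your argument goes through, though the paper's objectwise route is shorter.
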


\begin{proof}
Since $\rho^*$ maps fibrant objects to local objects, the Quillen pair of Proposition \ref{prop:pointed-vs-red} factors through the localisation as 
\begin{equation*}
	\begin{tikzpicture}[descr/.style={fill=white}, baseline=(current bounding box.base)] ] 
	\matrix(m)[matrix of math nodes, row sep=2.5em, column sep=2.5em, 
	text height=1.5ex, text depth=0.25ex] 
	{
	\RRSp_*  & (\RRSp_*)_{wred} \\
	(\RRSp)_{red} & \\
	}; 
	\path[->,font=\scriptsize] 
		(m-1-1) edge node [auto] {} (m-1-2);
	\path[dashed,<-,font=\scriptsize] 
		(m-2-1) edge node [auto] {$\overline{\rho}_!$} (m-1-2);
	\path[->,font=\scriptsize] 
		(m-1-1) edge node [left] {$\rho_!$} (m-2-1);
	\end{tikzpicture} 
\end{equation*}
where the arrows denote the left Quillen functors. To show that $\overline{\rho}_!$ and its right adjoint $\overline{\rho}^*$ form a Quillen equivalence it suffices to prove that the derived unit $X \to \RR \overline{\rho}^* \LL \overline{\rho}_! X$ is a a weak equivalence for every $X$. Because $\overline{\rho}^*$ detects weak equivalences, the fact that the derived counit is also a weak equivalence then follows. Also, the forgetful functor $\overline{\rho}^*$ preserves weak equivalences between arbitrary objects, so in fact it suffices to prove for a cofibrant and weakly reduced pointed $\RR$-space $(X,x_0)$ that the unit $X \to \rho^* \rho_! X$ is a weak equivalence. Consider the defining pushouts (\ref{eq:red-pushout}) of $\rho_! X$. Since $X(t) \to X(r)$ is a mono by the assumption that $t$ is small, and the map $X(t) \to \Delta[0]$ is a weak equivalence (by the assumption on $X$), we conclude that $X \to \rho^* \rho_! (X)$ is a weak equivalence by the left properness of simplicial sets.
\end{proof}

\begin{rem}
Using the slightly stronger assumption that $\RR$ has a zero-object $t$ (again of Reedy degree $d(t) = 0$), the functor $\rho^*$ has a right adjoint $\rho_*$. In fact, in this case $\rho^*$ is also left Quillen and induces a Quillen equivalence
\[
\overline{\rho}^* : (\RRSp)_{red} \to (\RRSp_*)_{wred} \; .
\]
Indeed, it is sufficient to prove that $\rho^* \rho_!$ sends the generating (trivial) cofibrations to (trivial) cofibrations and the localising maps $\RR(-,t) \otimes \partial \Delta[n] \to \RR(-,t) \otimes \Delta[n]$ to weak equivalences. This was essentially done already in the proof of Proposition \ref{prop:pointed-vs-red}.
\end{rem}

\appendix

\section{A special Barratt-Priddy-Quillen theorem}

In this appendix, we will give a proof of our version of the Barratt-Priddy-Quillen theorem, stated as Theorem \ref{thm:introBPQ} in the introduction.

The category $\Gamma$ is a generalised Reedy category, thus providing the category of $\Gamma$-spaces with a Reedy model structure, Quillen equivalent to the projective model structure. We denote the left Quillen functor defining this equivalence by
\[
\GammaSp_P \to \GammaSp_R
\]
with $P$ for projective and $R$ for Reedy, as before. However, we shall mostly work with $\Gamma$-spaces as \emph{covariant} functors
\[
X : \FF \to \Sp
\]
from the category $\FF$ of finite sets and partial maps. The Reedy structure on $\Gamma$ is defined by letting the degree of a finite set be its cardinality, while $\Gamma^-$ consists of duals of totally defined \emph{injections} $A \hookrightarrow B$, and $\Gamma^+$ consists of the duals of possibly partially defined surjections $A \pto B$. The usual surjection-injection factorization of a map between finite sets provides a factorization as a negative morphism followed by a positive one in the dual category $\Gamma$. With these definitions, a map $Y \to X$ between covariant functors $\FF \to \Sp$ is a Reedy fibration if and only if for any finite set $A$, the map
\[
Y(A) \to X(A) \times_{X(\partial A)} Y(\partial A)
\]
is a Kan fibration, where 
\[
X(\partial A) = \lim_{A \pto B \; epi} X(B)
\]
is the limit over all proper, possibly partially defined, surjections $A \pto B$. Furthermore, a map $X \to Y$ is a Reedy cofibration if each $X(A) \to Y(A)$ is injective and $Aut(A)$ acts freely on the complement of
\[
X(A) \cup \bigcup_{B \hookrightarrow A} Y(B) \to Y(A)
\] 
where the union is over all proper monomorphisms (totally defined) from $B$ to $A$. In particular, an object $Y$ is Reedy cofibrant if $Aut(A)$ acts freely on the complement of 
\[
\bigcup_{B \subset A} Y(B) \hookrightarrow Y(A)
\]
the union being over \emph{proper} subsets $B$ of $A$.

\medskip
Another category that will be relevant for us is the category $\MM$ of finite sets and (totally defined) monomorphisms. This category is also a generalised Reedy category. The degree of an object is again its cardinality, and every morphism is positive (i.e. $\MM^+ = \MM$) while $\MM^-$ consists of isomorphisms only. We write $\MMSp_{R}$ for the category of simplicial presheaves on $\MM$ with this Reedy model structure. For easy reference, we state explicitly that a map $Y \to X$ of $\MM$-spaces is a Reedy fibration if for any finite set $A$, the map
\[
Y(A) \to X(A) \times_{X(\partial A)} Y(\partial A)
\]
is a Kan fibration, where
\[
X(\partial A) = \lim_{B \subset A} X(B)
\]
is the limit over proper subsets $B$ of $A$. Furthermore, a map $X \to Y$ is a Reedy cofibration if each $X(A) \to Y(A)$ is a monomorphism of simplicial sets with $Aut(A)$ acting freely on $Y(A) - X(A)$.

\medskip
There is an inclusion functor 
\[
inv : \MM^{\op} \to \FF
\]
which sends an injection $m : B \hookrightarrow A$ to the partial map given by taking the inverse $m^{-1} : A \pto B$. Maps in the image of this functor are called \emph{inert}. (These are the partial maps for which the preimage of each singleton is a singleton.) Restriction along $inv$ defines a functor
\[
inv^* : \GammaSp \to \MMSp \;.
\]
There is yet another functor
\[
pow : \GammaSp \to \MMSp
\]
which assigns to a $\Gamma$-space $X$ the $\MM$-space $A \mapsto X(\underline{1})^A$ (here we use $X(\underline{1})^A$ as alternative notation for the iterated pullback over $X(\varnothing)$). 

The map $\rho_A : X(A) \to X(\underline{1})^A$ is actually a natural transformation $\rho : inv^*(X) \to pow(X)$.

\begin{lem}
If $X$ is a Reedy fibrant object in $\GammaSp$ then $\rho : inv^*(X) \to pow(X)$ is a Reedy fibration in $\MMSp$. It is a trivial fibration if $X$ is special.
\end{lem}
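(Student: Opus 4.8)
The plan is to reduce both assertions to a comparison of matching objects for the two Reedy structures (on $\Gamma$ and on $\MM$) and then to feed the comparison into Lemma \ref{lem:Vreedy}. Throughout I would use that a $\Gamma$-Reedy fibrant $X$ is levelwise Kan and that $X(\underline{1}) \to X(\varnothing)$ is a Kan fibration; the latter is exactly the $\Gamma$-matching condition at $\underline{1}$, whose only proper partial surjection is $\underline{1} \pto \varnothing$.

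First I would unwind the $\MM$-Reedy fibration condition for $\rho$ at a finite set $A$, namely that
\[
X(A) \to pow(X)(A) \times_{pow(X)(\partial A)} inv^*(X)(\partial A)
\]
be a Kan fibration, where $pow(X)(\partial A) = \lim_{B \subsetneq A} X(\underline{1})^B$ and $inv^*(X)(\partial A) = \lim_{B \subsetneq A} X(B)$ are limits over proper subsets along the inert restriction maps. For $|A| \le 1$ this matching map is (isomorphic to) an identity, hence trivially a Kan fibration. For $|A| \ge 2$ every singleton is a proper subset, and a compatible family over the poset of proper subsets for $pow(X)$ is determined by its singleton components glued over $X(\varnothing)$; thus $pow(X)(A) = X(\underline{1})^A \xrightarrow{\cong} pow(X)(\partial A)$ is an isomorphism. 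Pulling back along an isomorphism, the target collapses to $inv^*(X)(\partial A)$, so the matching map for $\rho$ is identified with the plain $\MM$-matching map $X(A) \to \lim_{B \subsetneq A} X(B)$ of $inv^*(X)$.

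It then remains to check that this last map is a Kan fibration. Here I would apply Lemma \ref{lem:Vreedy} with $\RR = \Gamma$, $r = A$, and $V \subseteq \partial\Gamma(-,A)$ the subobject generated by the inert proper maps into $A$; these are properly positive in $\Gamma$, so $V$ is genuinely contained in $\partial\Gamma(-,A)$. The key identification is that $X(\partial_V A) = \map(V,X) = \lim_{B \subsetneq A} X(B)$: since $V$ is the union of the representable images of the $\iota_B : B \to A$ and these images meet in the image of $B \cap B'$, one has $V \cong \colim_{B \subsetneq A} \Gamma(-,B)$, whence $\map(V,X)$ is the limit over proper subsets. Lemma \ref{lem:Vreedy} then gives that $X(\partial A) \to X(\partial_V A)$ is a Kan fibration, i.e. the $\Gamma$-matching object fibres onto the $\MM$-matching object; composing with the $\Gamma$-Reedy fibration $X(A) \to X(\partial A)$ yields that $X(A) \to \lim_{B \subsetneq A} X(B)$ is a Kan fibration, finishing the first claim. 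For the second claim I would additionally show that each $\rho_A : X(A) \to X(\underline{1})^A$ is a weak equivalence: iterating the special condition $(ii)$ gives that $X(A) \to X(\underline{1})^{\times |A|}$ (the ordinary product, induced by the inert surjections onto singletons) is a weak equivalence, and since $X(\varnothing) \simeq \Delta^0$ by $(i)$ while the legs $X(\underline{1}) \to X(\varnothing)$ are Kan fibrations, the comparison $X(\underline{1})^A \to X(\underline{1})^{\times |A|}$ from the iterated fibre product over $X(\varnothing)$ to the ordinary product is a weak equivalence (a homotopy fibre product over a contractible base); as $\rho_A$ factors through it, two-out-of-three makes $\rho_A$ a weak equivalence, so $\rho$ is a trivial fibration.

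I expect the main obstacle to be the bookkeeping in the matching-object comparison — in particular verifying $X(\partial_V A) \cong \lim_{B \subsetneq A} X(B)$ and that $pow(X)(A) \to pow(X)(\partial A)$ is an isomorphism for $|A| \ge 2$ — because the two Reedy structures index their matching objects over genuinely different diagrams (proper partial surjections for $\Gamma$, proper subset inclusions for $\MM$), and one must track the inert maps carefully through the functors $inv^*$ and $pow$. Once these identifications are secured, Lemma \ref{lem:Vreedy} and the special condition do the remaining work.
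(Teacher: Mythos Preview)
Your proposal is correct and follows essentially the same route as the paper: reduce the $\MM$-matching condition on $\rho$ at $A$ to showing that $X(A)\to\lim_{B\subsetneq A}X(B)$ is a Kan fibration, recognise this limit as $X(\partial_V A)$ for the subobject $V\subset\partial\Gamma(-,A)$ of non-total partial maps (equivalently, the subobject generated by the proper inert maps), and then invoke Lemma~\ref{lem:Vreedy} together with $\Gamma$-Reedy fibrancy of $X$. Your treatment is in fact a little more careful about the cases $|A|\le 1$ and also spells out the ``special $\Rightarrow$ trivial'' half, which the paper's proof leaves to the reader.
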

\begin{proof}
Take an object $A$ in $\MM$. Write $Z$ for $inv^*(X)$. We have to check that 
\[
Z(A) \to Z(\partial A) \times_{Z(\underline{1})^{\partial A}} Z(\underline{1})^{A}
\]
is a fibration, where $Z(\partial A)$ denotes the limit, over all proper subsets $B$ of $A$, of $Z(B)$, and similarly for $Z(\underline{1})^{\partial A}$. But $Z(\underline{1})^{\partial A} \cong Z(\underline{1})^A$, and so we have to check that
\begin{equation}\label{eq:mfib}
Z(A) \to \lim_{B \subset A} Z(B)
\end{equation}
is a fibration. If $A = \varnothing$, this is the fibration $X(\varnothing) \to *$. In general, let $V$ be the set of all partial maps $f : A \pto C$ which are proper surjections but are not total. Then the inert maps $A \pto B$ for $B\subset A$ are cofinal in $V$ and, in the notation of section \ref{sec:reedyref}, the map (\ref{eq:mfib}) is $X(A) \to X(\partial_V A)$. Lemma \ref{lem:Vreedy} completes the proof.
\end{proof}

We now turn to the statement and proof of our Barratt-Priddy-Quillen theorem. The groupoid of finite sets and \emph{bijections} will be denoted $\Sigma$. Moreover, for a finite set $A$, we shall write $ \Sigma(A) $ for the groupoid of finite sets over $A$ and fibrewise bijections between them. Thus, objects of $\Sigma(A)$ are (total) functions $f : U \to A$ between finite sets, and morphisms from $(f : U \to A)$ to $(g:V\to A)$ are bijections $\sigma : U \to V$ with $g \sigma = f$. Clearly, $\Sigma(-)$ is a covariant functors from $\FF$ to groupoids. Indeed, for a partial map $g : A \pto B$, the induced functor $g_* : \Sigma(A) \to \Sigma(B)$ sends a function $f : U \to A$ to the function obtained by restricting the composition $g \circ f$ to where it is defined.

Let $B\Sigma$ be the functor $\FF \to \Sp$ which sends a finite set $A$ to the nerve of $\Sigma(A)$. Thus, an $n$-simplex in $B\Sigma(A)$ is a diagram of finite sets of the form
\[
U_0 \xrightarrow{\sigma_1} U_1 \xrightarrow{\sigma_2} \cdots \xrightarrow{\sigma_n} U_n \xrightarrow{f} A
\]
where the ${\sigma_i} ' s$ are isomorphisms over $A$. For brevity we denote such an $n$-simplex by $(\sigma; f)$ or $(\sigma_1, \dots, \sigma_n; f)$. Notation: we sometimes write $f_i$ for the composite $f \sigma_{n} \dots \sigma_{i+2} \sigma_{i+1}$ for $i < n$ and $f_n = f$.

\begin{thm}\label{thm:Segal}
The map $u : \FF(\underline 1, -) \rightarrow B \Sigma$ corresponding to the object $id : \underline{1} \to \underline{1}$ in $\Sigma(\underline 1)$ is a trivial Reedy cofibration between Reedy cofibrant objects in the localised model category $(\GammaSp)_{RS}$ ~.
\end{thm}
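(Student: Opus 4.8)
The plan is to verify the three assertions of the theorem in turn: that $\FF(\underline 1,-)$ and $B\Sigma$ are Reedy cofibrant, that $u$ is a Reedy cofibration, and --- the substantive point --- that $u$ is a weak equivalence in the special localisation $(\GammaSp)_{RS}$. For the cofibrancy of objects I would use the explicit criterion recalled above, namely that $Y$ is Reedy cofibrant precisely when $Aut(A)$ acts freely on the complement of $\bigcup_{B\subsetneq A} Y(B)$ in $Y(A)$. For $B\Sigma$ the non-latching part of $B\Sigma(A)=N\Sigma(A)$ consists exactly of those simplices $(\sigma_1,\dots,\sigma_n;f)$ whose structure map $f\colon U_n\to A$ is \emph{surjective}: a simplex lies in the image of some $B\Sigma(B)$ with $B\subsetneq A$ iff $f$ factors through $B$, i.e.\ iff $f$ is not onto. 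Since $Aut(A)$ acts by postcomposing $f$, and $gf=f$ forces $g=\mathrm{id}$ once $f$ is surjective, the action is free on this complement; hence $B\Sigma$ is Reedy cofibrant. The same bookkeeping gives Reedy cofibrancy of $\FF(\underline 1,-)$, the only nonempty complements occurring for $|A|\le 1$, where $Aut(A)$ is trivial.

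For the cofibration $u$ I would first observe that $u_A$ is injective: it sends the undefined partial map to the object $\varnothing\to A$ and each $a\in A$ to the singleton $\underline 1\xrightarrow{a}A$, and these are pairwise distinct objects of $\Sigma(A)$, so $u_A$ is injective on vertices and hence (the source being discrete) as a map of simplicial sets. It then remains to check freeness of the $Aut(A)$-action on the complement of $\mathrm{im}(u_A)$ together with the latching image. For $|A|\ge 2$ every singleton $\underline 1\xrightarrow{a}A$ already factors through a proper subset, so $\mathrm{im}(u_A)$ is contained in the latching object and the condition reduces to the Reedy cofibrancy of $B\Sigma$ just established; for $|A|\le 1$ the group $Aut(A)$ is trivial.

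The heart of the matter is that $u$ is a special-local weak equivalence. Since the two model structures $(\GammaSp)_{RS}$ and $(\GammaSp)_{PS}$ have the same weak equivalences, and since $B\Sigma(A)$ is the nerve of a groupoid (hence a Kan complex, so $B\Sigma$ is projectively fibrant), I would check the defining local condition directly: for every \emph{special} Reedy fibrant $Z$ the map
\[
u^*\colon \RR\map(B\Sigma,Z)\longrightarrow \RR\map(\FF(\underline 1,-),Z)\;\cong\;Z(\underline 1)
\]
is a weak equivalence, the right-hand identification being Yoneda and $u^*$ sending $\phi$ to $\phi_{\underline 1}(\iota)$, its value at the generating object $\iota=(\mathrm{id}\colon\underline 1\to\underline 1)$. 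The key structural input is the decomposition $\Sigma(A)\cong\prod_{a\in A}\Sigma$, which exhibits $B\Sigma$ as the free special $\Gamma$-space on one generator: every object of $\Sigma(A)$ is $f_*\iota_k$ for a total map $f\colon\underline k\to A$ and the canonical $\iota_k=(\mathrm{id}\colon\underline k\to\underline k)$, while the inert projections $\underline k\pto\underline 1$ carry $\iota_k$ to $\iota$. I would combine this with the preceding Lemma on $\rho$: because $Z$ is special and Reedy fibrant, $\rho\colon inv^*Z\to pow Z$ is a trivial Reedy fibration in $\MMSp$, so $Z(A)\simeq Z(\underline 1)^A$ compatibly with the inert structure. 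Naturality of $\phi$ along the total and inert maps out of $\underline k$ then forces $\phi_{\underline k}(\iota_k)$ to be the diagonal point $(\phi_{\underline 1}(\iota),\dots,\phi_{\underline 1}(\iota))\in Z(\underline 1)^k$, so that $\phi$ is determined, up to coherent homotopy, by the single point $\phi_{\underline 1}(\iota)$.

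To upgrade this to an equivalence of mapping \emph{spaces}, I would resolve $B\Sigma$ by the simplicial object $[n]\mapsto N_n\Sigma$ of its nerve (each $N_n\Sigma$ being a coproduct of objects generated as above) and compute $\RR\map(B\Sigma,Z)$ as the resulting homotopy limit. The main obstacle lies exactly here: one must show this totalisation collapses onto $Z(\underline 1)$, i.e.\ that the higher symmetric powers $(A^k)_{h\Sigma_k}$ with $k\ge 2$ contribute nothing. This is where the special (product) condition on $Z$ is indispensable --- it trivialises the $\Sigma_k$-homotopy-orbit data by contracting the relevant spaces of invariant points onto the diagonal --- and where the interplay between the Reedy and projective structures, together with the control of $Z$ on inert maps afforded by the $\rho$-Lemma and Lemma \ref{lem:Vreedy}, carries the argument. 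The remainder is naturality bookkeeping.
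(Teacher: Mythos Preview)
Your cofibrancy arguments are fine and match the paper's. The gap is in the weak-equivalence part. You correctly locate the crux --- showing that the totalisation collapses onto $Z(\underline 1)$ --- but you do not actually carry this out. The sentence ``it trivialises the $\Sigma_k$-homotopy-orbit data by contracting the relevant spaces of invariant points onto the diagonal'' is an assertion, not an argument: the special condition gives $Z(\underline k)\simeq Z(\underline 1)^k$, but it does not by itself explain why mapping out of the symmetric-group pieces of $B\Sigma$ contributes nothing beyond the single generator. Your resolution of $B\Sigma$ by nerve degrees does not produce coproducts of representables in any evident way (the degree-$n$ piece $N_n\Sigma$ is not a sum of $\FF(L,-)$'s), so the homotopy-limit computation you allude to has no obvious structure to exploit. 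In short, the proposal stops precisely where the work begins.

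The paper takes a completely different route, due to Segal, that sidesteps this obstruction. Rather than compute $\RR\map(B\Sigma,Z)$, it works at the level of $\hom$ in the homotopy category and constructs an explicit inverse to $u^*$. The key construction is the functor $F\mapsto B\Sigma_F$ from $\MM$-spaces to $\Gamma$-spaces (a parametrised version of $B\Sigma$): given $v\in Z(\underline 1)_0$, one forms the contractible $\MM$-space $\mathbf{X}_v$ as the fibre of (a cofibrant replacement of) $\rho\colon inv^*Z\to pow(Z)$ over the constant point $v$, obtaining a zigzag $B\Sigma\xleftarrow{\;\sim\;}B\Sigma_{\mathbf{X}_v}\xrightarrow{\;\xi\;}Z$. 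This produces the inverse on $\pi_0$; injectivity is then shown by an explicit homotopy built from a contracting homotopy of $\mathbf{X}_v$. None of this machinery appears in your proposal.
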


Because the identity functor is a Quillen equivalence between the Reedy and projective model structures, we obtain:

\begin{cor}
The map $\FF(\underline 1, -) \rightarrow B \Sigma$ is a fibrant replacement of $\FF(\underline 1, -)$ in the localised projective model structure $(\GammaSp)_{PS}$ ~.
\end{cor}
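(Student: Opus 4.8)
The plan is to read off both defining features of a fibrant replacement from Theorem~\ref{thm:Segal} together with a direct inspection of the object $B\Sigma$. Recall that a fibrant replacement of $\FF(\underline{1},-)$ in $(\GammaSp)_{PS}$ is nothing more than a weak equivalence $\FF(\underline{1},-)\to B\Sigma$ in this model structure whose target is fibrant. (That our map is in addition a cofibration, as provided by the theorem, is not needed.) Thus I must verify two things: \textbf{(a)} that $u$ is a weak equivalence in $(\GammaSp)_{PS}$, and \textbf{(b)} that $B\Sigma$ is fibrant there, i.e. projectively fibrant and special.

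For \textbf{(a)}, the point is that $(\GammaSp)_{RS}$ and $(\GammaSp)_{PS}$ are left Bousfield localisations, at the \emph{same} set of maps, of the Reedy and projective structures on $\GammaSp$; and these two underlying structures share the same (objectwise) weak equivalences and are Quillen equivalent via the identity functor recorded just before the corollary. This Quillen equivalence descends to the special localisations, so the two localised structures carry the same class of weak equivalences. Theorem~\ref{thm:Segal} asserts that $u$ is a \emph{trivial} Reedy cofibration, in particular a weak equivalence, in $(\GammaSp)_{RS}$; hence it is a weak equivalence in $(\GammaSp)_{PS}$ as well.

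For \textbf{(b)}, recall that the fibrant objects of $(\GammaSp)_{PS}$ are exactly the projectively fibrant $\Gamma$-spaces that are special. Projective fibrancy is immediate: for each finite set $A$ the space $B\Sigma(A)$ is the nerve of the groupoid $\Sigma(A)$, and the nerve of any groupoid is a Kan complex, so $B\Sigma$ is levelwise Kan. For specialness I would check the two conditions directly. Condition $(i)$ holds because $\Sigma(\varnothing)$ is the terminal groupoid (its only object is the empty set over $\varnothing$, carrying only the identity), so $B\Sigma(\varnothing)=\Delta^0$. Condition $(ii)$ follows from the fact that a total function $U\to A\amalg B$ decomposes $U$ canonically as $U_A\amalg U_B$ with $U_A\to A$ and $U_B\to B$; this assignment is natural in fibrewise bijections and is precisely the functor induced by the two inert surjections, exhibiting an equivalence of groupoids $\Sigma(A\amalg B)\simeq\Sigma(A)\times\Sigma(B)$. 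Passing to nerves, the Segal map $B\Sigma(A\amalg B)\to B\Sigma(A)\times B\Sigma(B)$ is the nerve of an equivalence of groupoids and hence a weak equivalence. Thus $B\Sigma$ is special, and combined with projective fibrancy it is fibrant in $(\GammaSp)_{PS}$; together with \textbf{(a)} this gives the corollary.

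The only genuine content beyond Theorem~\ref{thm:Segal} lies in \textbf{(b)}, and the instructive contrast is with the Reedy structure: as noted in the introduction, $B\Sigma$ is \emph{not} Reedy fibrant, since the matching-object maps $B\Sigma(A)\to B\Sigma(\partial A)$ need not be Kan fibrations, so the theorem does not exhibit $B\Sigma$ as a Reedy fibrant replacement. What rescues the projective statement is that projective fibrancy demands only levelwise Kan-ness, which the groupoid nerve supplies for free. I anticipate no serious obstacle; the care required is simply to keep straight that weak equivalences coincide across the two localisations (step \textbf{(a)}) and that it is \emph{projective}, not Reedy, fibrancy being claimed for $B\Sigma$ (step \textbf{(b)}).
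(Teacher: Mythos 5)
Your argument is correct and takes essentially the same route as the paper: the paper deduces the corollary from Theorem~\ref{thm:Segal} via the identity Quillen equivalence between the Reedy and projective (localised) structures, relying on the remark --- stated there without proof --- that $B\Sigma$ is a special $\Gamma$-space, hence (being levelwise the nerve of a groupoid, so levelwise Kan) fibrant in $(\GammaSp)_{PS}$. Your steps \textbf{(a)} and \textbf{(b)} simply make explicit the two facts the paper compresses into one sentence and a remark, including the correct observation that a weak equivalence into a fibrant object suffices for a fibrant replacement.
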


\begin{lem}\label{lem-BScof}
 The $\Gamma$-space $B\Sigma$ is Reedy cofibrant.
\end{lem}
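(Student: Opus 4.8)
The plan is to verify directly the explicit criterion for Reedy cofibrancy in $\GammaSp$ recalled just above: for every finite set $A$, the group $Aut(A)$ must act freely on the complement of the latching inclusion $\bigcup_{B \subsetneq A} B\Sigma(B) \hookrightarrow B\Sigma(A)$, where the union ranges over the proper subsets $B$ of $A$ (equivalently, over proper monomorphisms into $A$, which are exactly the properly negative direction of $\Gamma$).

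First I would identify this latching object explicitly. A proper inclusion $j : B \hookrightarrow A$ induces $j_* : \Sigma(B) \to \Sigma(A)$, sending $(U, h : U \to B)$ to $(U, jh : U \to A)$ and leaving the fibrewise bijections unchanged; on nerves it carries an $n$-simplex $(\sigma_1, \dots, \sigma_n; h)$ to $(\sigma_1, \dots, \sigma_n; jh)$. Since $j$ is injective this map is a monomorphism, and its image consists precisely of the simplices $(\sigma; f)$ whose final map $f : U_n \to A$ factors through $B$, i.e.\ has $\mathrm{im}(f) \subseteq B$. Taking the union over all proper $B \subsetneq A$, the latching object is therefore exactly the set of simplices $(\sigma; f)$ for which $f$ is \emph{not} surjective. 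Consequently the complement of the latching inclusion in $B\Sigma(A)$ is the set of simplices $(\sigma_1, \dots, \sigma_n; f)$ with $f : U_n \to A$ surjective.

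Next I would make the $Aut(A)$-action explicit and check freeness on this complement. An automorphism $g \in Aut(A)$ acts through $g_* : \Sigma(A) \to \Sigma(A)$, which sends $(U, f)$ to $(U, g \circ f)$ and fixes the over-$A$ bijections, so on simplices $g \cdot (\sigma_1, \dots, \sigma_n; f) = (\sigma_1, \dots, \sigma_n; g \circ f)$. Suppose $g$ fixes a simplex $(\sigma; f)$ in the complement, so $g \circ f = f$ with $f$ surjective. Given $a \in A$, choose $u \in U_n$ with $f(u) = a$; then $g(a) = g(f(u)) = f(u) = a$, whence $g = \mathrm{id}_A$. Thus $Aut(A)$ acts freely on the complement, which is precisely the condition for $B\Sigma$ to be Reedy cofibrant.

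The entire content lies in the first step — matching the Reedy latching object for $\Gamma$ with the non-surjective simplices — after which freeness is immediate, so I do not expect a genuine obstacle here. The only care needed is bookkeeping: that injections of finite sets give the negative (latching) direction in $\Gamma$, and that $Aut(A)$ acts by post-composition on the labelling map $f$, so that surjectivity of $f$ is exactly what rigidifies the action.
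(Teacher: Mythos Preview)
Your argument is correct and is essentially the paper's proof spelled out in more detail: the paper simply observes that the complement of the latching object is the nerve of the groupoid of surjections $U \twoheadrightarrow A$ and that $Aut(A)$ acts freely already on the objects, which is exactly your identification of the complement together with the post-composition freeness check.
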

\begin{proof}
We will write $Y$ for $B\Sigma$. Given $A \in \FF$, we need to check that $Aut(A)$ acts freely on the complement of $\bigcup_{B \subset A} Y(B)$ (union over \emph{proper} subsets $B$ of $A$) in $Y(A)$. That complement is identified with the nerve of the category of \emph{surjections} $U \twoheadrightarrow A$ and isomorphisms between them and clearly $Aut(A)$ acts freely on these (it acts freely on the objects).
\end{proof}

\begin{rem} The $\Gamma$-space $B\Sigma$ is fibrant in the localised projective model structure (it is a special $\Gamma$-space). If we replace $B\Sigma$ by a Reedy fibrant object, it also becomes fibrant in the corresponding Reedy structure.
\end{rem}

Recall that $\MM$ is the category of finite sets and \emph{monomorphisms}. Following Segal, we now describe how to extend $B\Sigma$ to a functor 
\[
B\Sigma_{(-)} : \MMSp \to \GammaSp  \; ,
\]
where $\MMSp$ denotes the category of contravariant functors from $\MM$ to simplicial sets. 

First, let $F$ be a presheaf of \emph{sets} on $\MM$. Define 
\[\Sigma_F : \FF \to \Cat\] 
($\Cat$ denotes the category of categories) as follows. The objects of $\Sigma_F(A)$ are pairs $(f : U \xrightarrow{} A, x)$ where $f$ is an object of $\Sigma(A)$ and $x \in F(U)$. A morphism from $(f: U \xrightarrow{} A, x)$ to $(g : V \xrightarrow{} A, y)$ is a morphism $\sigma$ in $\Sigma(A)$ with the property that $\sigma^*y = x$.

Note that the projection $\pi_A : \Sigma_F(A) \to \Sigma(A)$ is a fibred category with discrete fibres. If $\varphi : A \pto B$ is a morphism in $\FF$, we define a functor
\[
\varphi_* : \Sigma_{F}(A) \to \Sigma_{F}(B)
\]
by mapping the object $(f : U \to A,x)$ to $(\varphi f : f^{-1}(\textup{dom}\, \varphi) \to B, y)$, where $y$ is the restriction of $x$ to $f^{-1}(\textup{dom}\, \varphi)$. This is indeed a functor, which makes the projection $\pi_A$ natural in A, 
\begin{equation*}
	\begin{tikzpicture}[descr/.style={fill=white}, baseline=(current bounding box.base)] ] 
	\matrix(m)[matrix of math nodes, row sep=2.5em, column sep=2.5em, 
	text height=1.5ex, text depth=0.25ex] 
	{
	\Sigma_F(A) & \Sigma_F(B) \\
	\Sigma(A) & \Sigma(B) \\
	}; 
	\path[->,font=\scriptsize] 
		(m-1-1) edge node [auto] {$\varphi_*$} (m-1-2);
	\path[->,font=\scriptsize] 
		(m-2-1) edge node [auto] {$$} (m-2-2);
	\path[->,font=\scriptsize] 
		(m-1-1) edge node [auto] {$$} (m-2-1);
	\path[->,font=\scriptsize] 		
		(m-1-2) edge node [auto] {$$} (m-2-2);
	\end{tikzpicture} 
\end{equation*}
Moreover, the construction is obviously functorial in $F$.

More generally, if $F$ is a presheaf of \emph{simplicial sets} on $\MM$, the same construction applied degreewise gives a functor $\Sigma_F$ on $\FF$ with values in the category of categories internal to simplicial sets (simplicial categories). Composing this functor with the classifying space construction then yields a functor
\[
B \Sigma_F : \FF \to \Sp \, ,
\]
i.e. a $\Gamma$-space. For a finite set $A$, the $n$-simplices of $B \Sigma_F (A) = \textup{diag}~N(\Sigma_F(A))$ are given by triples $(\sigma, f, x)$ where $(\sigma; f) = (\sigma_1, \dots, \sigma_n; f)$ is an $n$-simplex in $B\Sigma(A)$, i.e. of the form
\[
U_0 \xrightarrow{\cong} U_1 \xrightarrow{\cong} \cdots \xrightarrow{\cong} U_n \xrightarrow{f} A \; ,
\]
and $x \in F(U_0)_n$. For a partial map $\varphi : A \pto B$, the map of simplicial sets $\varphi_* : B\Sigma_F(A) \to B\Sigma_F(B)$ is the map which sends an $n$-simplex $((\sigma; f), x) = (\sigma, f, x)$ to 
$(B\Sigma(\varphi)(\sigma;f),y) = (\tau, \varphi f, y)$ which consists of a string
\[
V_0 \xrightarrow{\tau_1} V_1 \xrightarrow{\tau_2} \cdots \xrightarrow{\tau_n} V_n \xrightarrow{\varphi f} A \; ,
\]
where $V_i = f_{i}^{-1}(\textup{dom}\, \varphi)$, $\tau_i$ is the restriction of $\sigma_i$ to $V_i$, and $y$ is the image of $x$ in $F(V_0)_n$ given by restriction along the inclusion $V_0 \subset U_0$.

\begin{lem}
For any simplicial presheaf $F$ on $\MM$, the $\Gamma$-space $B\Sigma_F$ is Reedy cofibrant.
\end{lem}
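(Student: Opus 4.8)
The plan is to verify the explicit Reedy cofibrancy criterion recalled above: writing $Y = B\Sigma_F$, I must show that for every finite set $A$ the group $Aut(A)$ acts freely on the complement in $Y(A)$ of the union $\bigcup_{B \subset A} Y(B)$ taken over \emph{proper} subsets $B$ (each $Y(B)$ viewed as a subobject of $Y(A)$ via the inclusion $B \hookrightarrow A$). The argument is exactly the one used for Lemma~\ref{lem-BScof}, the only new feature being the extra datum $x \in F(U_0)$ attached to each simplex, which I will show is carried along harmlessly.

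First I would identify the maps coming from proper subsets. For an inclusion $\iota : B \hookrightarrow A$, which is a total injection and hence a morphism $B \to A$ in $\FF$, the induced functor $\iota_* : \Sigma_F(B) \to \Sigma_F(A)$ sends $(f : U \to B, x)$ to $(\iota f : U \to A, x)$: because $\iota$ is total, $f^{-1}(\textup{dom}\,\iota) = U$ and the decoration $x$ is restricted along the identity. On $n$-simplices this means $Y(B) \to Y(A)$ sends $(\sigma_1,\dots,\sigma_n; f, x)$ to $(\sigma_1,\dots,\sigma_n; \iota f, x)$, so the image of $Y(B) \to Y(A)$ consists precisely of those simplices $(\sigma; f, x)$ whose structure map $f : U_n \to A$ has image contained in $B$. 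Taking the union over all proper $B$, I conclude that $\bigcup_{B \subset A} Y(B)$ is exactly the set of simplices whose structure map $f$ is \emph{not} surjective; its complement is the set of simplices $(\sigma; f, x)$ with $f$ surjective.

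It remains to analyse the $Aut(A)$-action on this complement. For $g \in Aut(A)$, a total bijection, the functor $g_*$ sends $(f : U \to A, x)$ to $(g f : U \to A, x)$; the isomorphisms $\sigma_i$ and the decoration $x$ are left unchanged, again because $g$ is total, so that $f_i^{-1}(\textup{dom}\,g) = U_i$ throughout. Thus $g$ acts on $(\sigma; f, x)$ by $f \mapsto gf$ and fixes everything else. If $g$ fixes a simplex in the complement, then $gf = f$ with $f$ surjective, forcing $g = id_A$. Hence the action is free on the complement, and $Y = B\Sigma_F$ is Reedy cofibrant.

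I do not expect a genuine obstacle: the entire content is the bookkeeping showing that the complement is governed solely by the underlying $B\Sigma$ (surjectivity of $f$) and that the presheaf datum $x$ is inert under both the latching maps and the automorphism action. The one point worth stating carefully is that the totality of the inclusions and of the automorphisms is exactly what guarantees $x$ is transported by restriction along an identity, so that the reduction to Lemma~\ref{lem-BScof} is exact.
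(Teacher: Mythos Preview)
Your proposal is correct and follows exactly the approach the paper intends: the paper's proof is simply ``As in Lemma~\ref{lem-BScof},'' and you have spelled out precisely that argument, including the observation that the extra datum $x$ is unaffected by the latching maps and the $Aut(A)$-action because the relevant morphisms are total.
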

\begin{proof}
As in Lemma \ref{lem-BScof}.
\end{proof}

\begin{lem}
The construction of $B\Sigma_F$ is functorial with respect to the simplicial presheaf $F$. Moreover, if $F \to G$ is a weak equivalence (i.e. each $F(A) \to G(A)$ is), then the map $B\Sigma_F \to B\Sigma_G$ of $\Gamma$-spaces (over $B\Sigma$) is too.
\end{lem}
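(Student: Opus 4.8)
The plan is to prove both statements by unwinding the explicit description of $B\Sigma_F$ given just above. The construction $F \mapsto B\Sigma_F$ is manifestly natural in $F$: a map $F \to G$ of simplicial presheaves on $\MM$ induces, for each finite set $A$, a functor $\Sigma_F(A) \to \Sigma_G(A)$ which is the identity on the $\Sigma(A)$-component $(f : U \to A)$ and applies $F(U) \to G(U)$ to the decoration $x \in F(U)$. These functors are compatible with the action of partial maps $\varphi : A \pto B$ (both sides restrict the decoration along the same inclusion $V_0 \subset U_0$), so they assemble into a map of $\Gamma$-spaces $B\Sigma_F \to B\Sigma_G$. Functoriality in $F$ is then a routine check from the formulas, so the first sentence requires no real work.

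For the second statement, first I would observe that the forgetful projection $\pi_A : \Sigma_F(A) \to \Sigma(A)$ is, by construction, a fibred category (internal to simplicial sets) with discrete fibres: the fibre over an object $(f : U \to A)$ is exactly the simplicial set $F(U)$. Taking classifying spaces, the map $B\Sigma_F \to B\Sigma$ is therefore a map whose homotopy fibre over the component indexed by $(f : U \to A)$ is $F(U)$. The key point is that the nerve of $\Sigma(A)$ decomposes as a disjoint union indexed by isomorphism classes of objects $f : U \to A$, and over each such component $B\Sigma_F$ is built out of the value $F(U)$ via the bar construction for the automorphism group action. Concretely, an $n$-simplex of $B\Sigma_F(A)$ is a triple $(\sigma; f, x)$ with $x \in F(U_0)_n$, so degreewise the map $B\Sigma_F(A) \to B\Sigma_G(A)$ is a disjoint union of maps obtained by applying $F(U_0) \to G(U_0)$ fibrewise over the (unchanged) nerve-data $(\sigma; f)$.

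The main step is then to deduce that $B\Sigma_F(A) \to B\Sigma_G(A)$ is a weak equivalence from the hypothesis that each $F(U) \to G(U)$ is. The cleanest route is to realise $B\Sigma_F(A)$ as the homotopy colimit (over the groupoid $\Sigma(A)$, or equivalently over the nerve data) of the functor sending $(f : U \to A)$ to $F(U)$; since $F \to G$ is an objectwise weak equivalence, the induced map of homotopy colimit diagrams is a levelwise weak equivalence, and homotopy colimits preserve levelwise weak equivalences. I expect this invariance of the bar/hocolim construction to be the crux, but it is standard; one can also argue directly by the realisation lemma, viewing $B\Sigma_F(A)$ as the diagonal of a bisimplicial set whose rows are $\coprod F(U_0)$, so that a degreewise weak equivalence of bisimplicial sets yields a weak equivalence on diagonals. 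Either way, since weak equivalences of $\Gamma$-spaces are detected objectwise, establishing the equivalence for each $A$ completes the proof.
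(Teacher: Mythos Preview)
Your proposal is correct and follows essentially the same route as the paper: both observe that $\Sigma_F(A)\to\Sigma(A)$ is a fibred (simplicial) category with fibre $F(U)$ over $(f:U\to A)$, and deduce the result from this. The paper's proof is a two-line sketch that stops at ``from this, the statement is clear,'' whereas you spell out the standard justification (homotopy colimit invariance, or equivalently the realisation lemma for bisimplicial sets); this is exactly the content the paper leaves implicit.
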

\begin{proof}
The simplicial category $\Sigma_F(A)$ is fibred over $\Sigma(A)$ with fibre $F(A)$ and $F(A) \to G(A)$ induces a weak equivalence of simplicial categories $\Sigma_F(A) \to \Sigma_G(A)$ over $\Sigma(A)$. From this, the statement is clear.
\end{proof}

\begin{rem} Suppose $F$ has the property that for each pair of finite sets $A$ and $B$ the map
$
F(A \amalg B) \to F(A) \times_{F(\varnothing)} F(B)
$
induced by the inclusions into the coproduct is a weak equivalence, and that $F(\varnothing)$ is contractible. Then $B\Sigma_F$ is a special $\Gamma$-space. (That is, up to Reedy or projective fibrant replacement, $B\Sigma_F$ is a fibrant object in the appropriate model structure described in section \ref{sec:review}.)
\end{rem}

\begin{proof}[Proof of Theorem \ref{thm:Segal}]
We will show that for any Reedy fibrant special $\Gamma$-space $X$ the map
\begin{equation}\label{eq:BPQmap}
\hom(B\Sigma, X) \xrightarrow{u^*} \hom(\FF(\underline{1}, -), X) \; ,
\end{equation}
is an isomorphism, where $\hom$ is taken in the homotopy category of $(\GammaSp)_{RS}$. This implies that $u$ is a weak equivalence since $B\Sigma$ and $\FF(\underline{1}, -)$ are Reedy cofibrant. Moreover, the map $u$ is a monomorphism into a cofibrant object so it is also a Reedy cofibration. Note  that $\hom(\FF(\underline{1}, -), X)$ is $\pi_0 X(\underline{1})$.

Any map $f : B\Sigma \to X$ gives a vertex $u^*(f) = f_{\underline{1}}(id_{\underline{1}})$ in $X(\underline{1})$. Vice versa, using an idea from Segal \cite{Segal}, we will construct a map $B\Sigma \to X$ in the homotopy category from any such vertex $v$.

\medskip
Let $\epsilon : \mathbf{X} \to inv^*(X)$ be a Reedy cofibrant replacement of $inv^*(X)$, so that we have a sequence of trivial Reedy fibrations between $\MM$-spaces
\[
\mathbf{X} \xrightarrow{\epsilon} inv^*(X) \xrightarrow{\rho} pow(X) \; .
\]
A vertex $v$ of $X(\underline{1})_0$ defines a map $v : * \to pow(X)$, and we write $\mathbf{X}_v$ for the fibre of $\rho \epsilon$ over $v$. Then $\mathbf{X}_v \to *$ is a trivial fibration and $\mathbf{X}_v$ is cofibrant (it is a subobject of the cofibrant $\mathbf{X}$). So there is a contracting homotopy
\[
H : \mathbf{X}_v \times \mathbf{X}_v \to (\mathbf{X}_v)^{\Delta[1]} \; ,
\]
a section of the endpoint fibration $(\mathbf{X}_v)^{\Delta[1]} \to \mathbf{X}_v \times \mathbf{X}_v$. Moreover, there is a canonical map of $\Gamma$-spaces
\[
\xi : B \Sigma_{\mathbf{X}_v} \to X \; ,
\]
defined as follows. For an object $A$ of $\FF$ and an $n$-simplex $(\sigma, f, x)$ of $(B \Sigma_{\mathbf{X}_v})(A)$, 
\[
\xi_A (\sigma, f, x) = (f_0)_* \epsilon(x) \in X(A)_n \; ,
\]
where $f_0 = f \sigma_n \dots \sigma_2 \sigma_1 : U_0 \to A$. One can check that $\xi_A$ (which depends on $v$) is a well-defined simplicial map that is natural in $A$.

\medskip
We are now ready to prove that the map (\ref{eq:BPQmap}) is indeed a bijection. First, given a vertex $v \in X(\underline{1})_0$, choose a point $\widetilde{v}$ in the fibre $\mathbf{X}_v(\underline{1})_0$ over $v$.
Then we have a diagram
\[
B \Sigma \xleftarrow{\pi} B \Sigma_{\mathbf{X}_v} \xrightarrow{\xi} X
\]
with $\pi(\widetilde{v}) = id_{\underline{1}}$ and $\xi(\widetilde{v}) = v$, in which $\pi$ is a weak equivalence because $\mathbf{X}_v$ is contractible. This diagram represents a map $\xi \pi^{-1} : B \Sigma \to X$ in the homotopy category with $u^*(\xi \pi^{-1}) = v$.

In the other direction, suppose we are given a map 
\[
B \Sigma \xrightarrow{\psi} X \;.
\]
(Any map in the homotopy category can be so represented because $B\Sigma$ is cofibrant and $X$ is assumed fibrant.) Let $v = \psi_{\underline{1}}(id_{\underline{1}})$ in $X(\underline{1})_0$~, i.e. $v = u^* (\psi)$. We will construct a homotopy $K$
\begin{equation*}
	\begin{tikzpicture}[descr/.style={fill=white}, baseline=(current bounding box.base)] ] 
	\matrix(m)[matrix of math nodes, row sep=2.0em, column sep=2.0em, 
	text height=1.5ex, text depth=0.25ex] 
	{
	B \Sigma	& & X \\
	 & B \Sigma_{\mathbf{X}_v} \\
	}; 
	\path[->,font=\scriptsize] 
		(m-1-1) edge node [auto] {$\psi$} (m-1-3)
		(m-2-2) edge node [auto] {$\pi$} (m-1-1)
		(m-2-2) edge node [below] {$\xi$} (m-1-3);
	 \node [rotate=90] at (0.05,0.1) {$\Rightarrow$};	
	
	\end{tikzpicture} 
\end{equation*}
between $\psi \pi$ and $\xi$, using the contracting homotopy $H$. To this end, first choose a lift $\Psi$ as in
\begin{equation*}
	\begin{tikzpicture}[descr/.style={fill=white}, baseline=(current bounding box.base)] ] 
	\matrix(m)[matrix of math nodes, row sep=2.0em, column sep=2.0em, 
	text height=1.5ex, text depth=0.25ex] 
	{
		& & \mathbf{X} \\
	W & inv^*(B \Sigma) & inv^*(X) \\
	}; 
	
	\path[->,font=\scriptsize] 
		(m-2-2) edge node [auto] {$\psi$} (m-2-3)
		(m-2-1) edge node [auto] {} (m-2-2);
	\path[->, dashed,font=\scriptsize] 
		(m-2-1) edge node [auto] {$\Psi$} (m-1-3);
	\path[->>,font=\scriptsize] 		
		(m-1-3) edge node [auto] {$\epsilon$} (m-2-3);
	\end{tikzpicture} 
\end{equation*}
where $W$ is the sub $\MM$-space of $inv^*(B\Sigma)$ consisting of those $n$-simplices $(\sigma,f)$ for which $f$ is an isomorphism. Such a lift exists because $W$ is Reedy cofibrant.

Notice that the map $\psi_A : B\Sigma(A) \to X(A)$ maps an $n$-simplex $(\sigma, f)$ of $B\Sigma(A)$ to the fibre over $v$ whenever $f$ (and hence all the $f_i$) are isomorphisms, and hence in this case $\Psi_A$ maps $(\sigma,f)$ into $\mathbf{X}_v$.

Now, for a general $n$-simplex $(\sigma,f,x)$ in $B\Sigma_{\mathbf{X}_v}(A)$, i.e. $(\sigma,f) \in B\Sigma(A)$ and $x \in \mathbf{X}_v(U_0)_n$, define
\[
K_A(\sigma, f, x) = f_* \epsilon H(\Psi(\sigma, id_{U_n}), (\sigma_n \dots \sigma_1)_*(x)) \; .
\]
We leave to the reader the verification that  $K_A(\sigma,f,x)$ is a path from $\psi_A(\sigma,f)$ to $\xi_A(\sigma,f,x)$ and that $K_A$ is a well defined map. 
\end{proof}

In the proof of Theorem \ref{thm:gamma}, we use a statement more general than Theorem \ref{thm:Segal}. This more general version, stated as Theorem \ref{thm:SegalL} below, can easily be derived from that theorem as we will now show.

For a finite set $L$, let $\Sigma^L$ be the category of finite sets and bijections over $L$ (i.e. bijections of finite sets labelled by $L$). Let $B\Sigma$ be the corresponding $\Gamma$-space. So an $n$-simplex of $B\Sigma^L(A)$ is
\[
U_0 \xrightarrow{\sigma_1} U_1 \xrightarrow{} \dots \xrightarrow{\sigma_n} U_n \xrightarrow{(\alpha,f)} A \times L
\]
where $\sigma_i$'s are bijections and $\alpha$ is the labelling. Let $\FF(L,-) \to B\Sigma^L$ be the map corresponding to the vertex $(id, id) : L \to L \times L$ in $B\Sigma^L(L)$.

\begin{thm}\label{thm:SegalL}
The map $\FF(L,-) \to B\Sigma^L$ is a trivial cofibration in the special $\Gamma$-space localisation of the Reedy model structure on $\GammaSp$.
\end{thm}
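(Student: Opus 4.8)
The plan is to reduce the statement to Theorem \ref{thm:Segal}, the case $L = \underline 1$. The starting point is a pair of product decompositions. In $\FF$ the object $L$ is the coproduct $\coprod_{\ell \in L} \underline 1$ (the wedge of pointed sets), so the representable $\Gamma$-space splits as $\FF(L,-) \cong \prod_{\ell \in L} \FF(\underline 1, -)$. On the other side, an object of $\Sigma^L(A)$ is a finite set over $A \times L$, and decomposing it according to its $L$-label exhibits $\Sigma^L(A) \cong \prod_{\ell \in L} \Sigma(A)$ naturally in $A$; passing to nerves gives $B\Sigma^L \cong \prod_{\ell \in L} B\Sigma$. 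Under these identifications the map of the theorem is the $|L|$-fold product $u^{\times L}$ of the map $u \colon \FF(\underline 1,-) \to B\Sigma$ of Theorem \ref{thm:Segal}. I also record that $B\Sigma^L = B\Sigma_F$ for the presheaf $F(U) = L^U$ on $\MM$.

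The cofibration assertion is then immediate. Since $F(U) = L^U$ satisfies $F(A \amalg B) = F(A) \times F(B)$ and $F(\varnothing) = \ast$, the object $B\Sigma^L = B\Sigma_F$ is both Reedy cofibrant (by the general cofibrancy lemma for $B\Sigma_F$) and special (by the multiplicativity remark for $B\Sigma_F$). The map $u^{\times L}$ is a levelwise monomorphism (it sends a partial map $L \pto A$ to the corresponding labelled finite set over $A$), and a monomorphism into a Reedy cofibrant object is automatically a Reedy cofibration, because the required free $Aut(A)$-actions on complements are inherited from the target. Thus $\FF(L,-) \to B\Sigma^L$ is a Reedy cofibration between Reedy cofibrant objects; that $\FF(L,-)$ is itself cofibrant is the observation that $Aut(A)$ acts freely on the surjective partial maps onto each $A$.

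It remains to prove that $u^{\times L}$ is a weak equivalence in $(\GammaSp)_{RS}$. As both objects are cofibrant, it suffices to show that for every Reedy fibrant special $\Gamma$-space $X$ the induced map $\hom(B\Sigma^L, X) \to \hom(\FF(L,-), X)$ of homotopy classes is a bijection. The right-hand side is $\pi_0 X(L)$, and since $X$ is special this is $\left(\pi_0 X(\underline 1)\right)^{L}$. To identify the left-hand side I would run the construction in the proof of Theorem \ref{thm:Segal} in the $L$-labelled setting: replace the fibre $\mathbf X_v$ of $\rho\epsilon$ over a single vertex $v \in X(\underline 1)$ by the fibre $\mathbf X_{\vec v}$ over a vertex $\vec v$ of $X(L) \simeq X(\underline 1)^{L}$, form $B\Sigma_{\mathbf X_{\vec v}}$, and build the comparison $B\Sigma^L \xleftarrow{\;\sim\;} B\Sigma_{\mathbf X_{\vec v}} \xrightarrow{\;\xi\;} X$ exactly as before. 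The contractibility of $\mathbf X_{\vec v}$ again follows from the special condition on $X$, via the trivial fibration $\rho$ of the lemma preceding Theorem \ref{thm:Segal}, and yields the inverse assignment; the homotopy $K$ shows the two constructions are mutually inverse.

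The main obstacle is precisely this last step. It is tempting to argue formally that $u^{\times L}$, being a product of weak equivalences, is itself a weak equivalence; but products preserve weak equivalences only between \emph{fibrant} objects, whereas $\FF(\underline 1,-)$ and $B\Sigma$ are not fibrant, and the special localisation is not cartesian (for cofibrant $Y$ one has $X^Y(\varnothing) = \map(Y,X)$, which need not be contractible, so $- \times Y$ is not left Quillen for the special structure). Consequently the weak equivalence cannot be deduced by abstract nonsense and genuinely requires the labelled version of Segal's construction above --- equivalently, the combinatorial fact that the free special $\Gamma$-space on $L$ points is the $|L|$-fold product of the free one on a single point.
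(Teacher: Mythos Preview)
Your route is genuinely different from the paper's, and it can be made to work, but there is one slip in the sketch and the comparison is instructive.

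\medskip
\textbf{What the paper does.} Rather than decomposing $\FF(L,-)$ and $B\Sigma^L$ as $L$-fold \emph{products}, the paper exploits the functor $L^*:\GammaSp\to\GammaSp$, $L^*(X)(A)=X(A\times L)$, and proves (Proposition~\ref{prop:Lfunctor}) that $L^*$ is left Quillen for the Reedy special structure. Since $B\Sigma^L=L^*(B\Sigma)$ and $L^*\FF(\underline 1,-)=\bigvee_L\FF(\underline 1,-)$, applying $L^*$ to the trivial cofibration $u$ of Theorem~\ref{thm:Segal} yields a trivial cofibration $\bigvee_L\FF(\underline 1,-)\to B\Sigma^L$. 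The horizontal map $\bigvee_L\FF(\underline 1,-)\to\FF(L,-)$ is one of the localising trivial cofibrations defining the special structure, and a $2$-out-of-$3$ argument finishes. No part of Segal's construction is rerun; the only work is checking $L^*$ preserves Reedy cofibrations and the localising maps.

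\medskip
\textbf{What you do.} You use the product splitting $\FF(L,-)\cong\prod_L\FF(\underline 1,-)$ and $B\Sigma^L\cong\prod_L B\Sigma$, correctly observe that this does \emph{not} formally give a weak equivalence (products need not preserve local weak equivalences between non-fibrant objects), and then propose to rerun the proof of Theorem~\ref{thm:Segal} in the $L$-labelled setting. This works, and is essentially Segal's original argument; but it is more labour than the paper's transfer via $L^*$, which packages the same combinatorics once and for all.

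\medskip
\textbf{One correction.} In your labelled construction, $\mathbf X_{\vec v}$ is \emph{not} contractible. A tuple $\vec v\in X(\underline 1)^L$ determines a map of $\MM$-spaces $F\to pow(X)$, $F(U)=L^U$, by $\alpha\mapsto \vec v\circ\alpha$, and $\mathbf X_{\vec v}$ is the pullback of $\rho\epsilon$ along this map. What you get is a trivial fibration $\mathbf X_{\vec v}\to F$ of $\MM$-spaces (each fibre is contractible because $\rho\epsilon$ is a trivial fibration), and hence a weak equivalence $B\Sigma_{\mathbf X_{\vec v}}\to B\Sigma_F=B\Sigma^L$. With that correction your zigzag $B\Sigma^L\xleftarrow{\sim}B\Sigma_{\mathbf X_{\vec v}}\xrightarrow{\xi}X$ and the homotopy $K$ go through as in Theorem~\ref{thm:Segal}.
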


We will deduce this theorem from Theorem \ref{thm:Segal}, i.e. the case where $L$ is the one-point set. To this end, consider the functor "product with $L$" and the induced restriction functor $L^* : \GammaSp \to \GammaSp$,
\[
L^*(X)(A) = X(A \times L)
\]
and observe that $B\Sigma^L = L^*(B\Sigma)$.

\begin{prop}\label{prop:Lfunctor}
The functor $L^*$ is a left Quillen functor (for the Reedy, special model structure as above).
\end{prop}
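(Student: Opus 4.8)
The plan is to treat $L^{*}$ as restriction along the endofunctor $({\times L})\colon\FF\to\FF$, $A\mapsto A\times L$, and to prove the statement in two stages: first that $L^{*}$ is left Quillen for the (unlocalised) Reedy structure, and then that it descends to the special localisation. The functor $({\times L})$ scales the Reedy degree by $|L|$ and carries injections to injections and partial surjections to partial surjections, so it is compatible with the classes defining the Reedy structure on $\Gamma$; more importantly it preserves finite coproducts, since $(A\sqcup B)\times L=(A\times L)\sqcup(B\times L)$. As a restriction functor, $L^{*}=({\times L})^{*}$ preserves all levelwise weak equivalences and commutes with all limits and colimits (in particular with $-\otimes K$), and it admits a right adjoint $\mathrm{Ran}_{\times L}$.

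For the first stage I would show $L^{*}$ preserves Reedy cofibrations. Being a left adjoint, $L^{*}$ preserves colimits, so it suffices to check this on the generating Reedy cofibrations, which are the pushout--products of the boundary inclusions $\partial\FF(r,-)\hookrightarrow\FF(r,-)$ with $\partial\Delta[n]\hookrightarrow\Delta[n]$. Since $L^{*}$ commutes with $-\otimes K$ and with colimits, and the pushout--product of a Reedy cofibration with a cofibration of simplicial sets is again a Reedy cofibration, it is enough to see that $L^{*}(\partial\FF(r,-)\hookrightarrow\FF(r,-))$ is a Reedy cofibration. Now $(L^{*}\FF(r,-))(A)=\FF(r,A\times L)$, and unwinding the latching description shows that the complement of the relative latching object at $A$ is the set of total injections $\iota\colon r\hookrightarrow A\times L$ whose composite with the projection $A\times L\to A$ is surjective. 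The group $Aut(A)$ acts by $\iota\mapsto(g\times\mathrm{id}_{L})\circ\iota$, and this action is free on such $\iota$: if $(g\times\mathrm{id}_{L})\circ\iota=\iota$ then $g$ fixes every point of $A$ (the projection being surjective), so $g=\mathrm{id}$. Combined with preservation of levelwise weak equivalences, this shows $L^{*}$ is left Quillen for the Reedy structure.

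For the second stage, recall that a left Quillen functor descends to a pair of left Bousfield localisations precisely when its right adjoint carries the local (here: special) objects to local objects; equivalently, when $L^{*}$ sends the localising maps to special weak equivalences. The localising maps may be taken to be $s_{0}\colon\varnothing\to\FF(\varnothing,-)$ (forcing $X(\varnothing)$ contractible) and $s_{A,B}\colon\FF(A,-)\sqcup\FF(B,-)\to\FF(A\sqcup B,-)$ (forcing the Segal maps $X(A\sqcup B)\to X(A)\times X(B)$ to be equivalences), where we use $\FF(A\sqcup B,-)\cong\FF(A,-)\times\FF(B,-)$. Since $L^{*}$ preserves coproducts and products and $(L^{*}\FF(\varnothing,-))(A)=\FF(\varnothing,A\times L)$ is a point, one has $L^{*}(s_{0})=s_{0}$, while $L^{*}(s_{A,B})$ is exactly the canonical comparison $W_{A}\sqcup W_{B}\to W_{A}\times W_{B}$, writing $W_{A}=L^{*}\FF(A,-)$. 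Thus the whole matter reduces to showing that these comparison maps are special weak equivalences, equivalently that $\mathrm{Ran}_{\times L}$ preserves special $\Gamma$-spaces.

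I would establish this last point through the identification that, for a special $\Gamma$-space $Z$, the derived right Kan extension satisfies $(\mathbb{R}\,\mathrm{Ran}_{\times L}Z)(A)\simeq Z(A\times L)=(L^{*}Z)(A)$; since $({\times L})$ preserves coproducts, $L^{*}Z$ is again special, so this immediately gives that $\mathrm{Ran}_{\times L}$ preserves special objects. The two endpoints are easy checks: at $A=\varnothing$ the indexing comma category $(\varnothing\downarrow{\times L})$ is all of $\FF$, which has the initial object $\varnothing$, so the homotopy limit is $Z(\varnothing)\simeq *$; and at $A=\underline 1$ one computes directly that $\RR\map(W_{\underline 1},Z)\simeq Z(\underline 1)^{L}\simeq Z(L)$, using that $W_{\underline 1}=\bigvee_{L}\FF(\underline 1,-)$ and that $Z(\varnothing)$ is contractible. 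The main obstacle is the general case: one must run a cofinality argument identifying a homotopy-initial subcategory of $(A\downarrow{\times L})$ over which the homotopy limit of $Z$ collapses, the point being that the Segal condition turns $Z(C)$ into $Z(\underline 1)^{C}$ and that the coproduct-preservation of $({\times L})$ makes the comma categories split, so that the Segal maps for $\mathbb{R}\,\mathrm{Ran}_{\times L}Z$ reduce to those for $Z$. This cofinality computation is the technical heart of the proposition.
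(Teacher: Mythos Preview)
Your first stage, showing that $L^{*}$ is left Quillen for the unlocalised Reedy structure, is essentially the paper's argument: reduce to the generating cofibrations, identify the complement of the relative latching object at $A$ as the set of total injections $r\hookrightarrow A\times L$ whose projection to $A$ is onto, and observe that surjectivity of the projection alone forces $Aut(A)$ to act freely.

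Where you diverge is in the second stage. You pass to the right adjoint and aim to show that $\mathbb{R}\,\mathrm{Ran}_{\times L}Z$ is special whenever $Z$ is, via the identification $(\mathbb{R}\,\mathrm{Ran}_{\times L}Z)(A)\simeq Z(A\times L)$. Unwinding the adjunction, this identification is equivalent to the assertion that $L^{*}\FF(A,-)=\FF(A,-\times L)$ is special--weakly equivalent to $\FF(A\times L,-)$ for every $A$. That is certainly true, but it is not obviously easier than the proposition itself; the cofinality computation you flag as the ``technical heart'' is doing all the work, and you have not carried it out. One also has to be a little careful: the naive set--level decomposition of $\FF(A,B\times L)$ by the $L$--labelling of the domain is not natural in $B$, so the argument cannot be purely combinatorial.

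The paper avoids this detour entirely. It works directly with the localising map $\FF(1,-)\vee\FF(1,-)\to\FF(2,-)$ and exhibits its image under $L^{*}$ as a pushout
\[
\begin{tikzpicture}[baseline=(current bounding box.base)]
\matrix(m)[matrix of math nodes, row sep=2.5em, column sep=2.5em,
text height=1.5ex, text depth=0.25ex]
{
\bigvee_{L^{2}}\FF(2,-) & L^{*}\FF(2,-) \\
\bigvee_{L^{2}}\FF^{-}(2,-) & L^{*}\bigl(\FF(1,-)\vee\FF(1,-)\bigr) \\
};
\path[->,font=\scriptsize]
(m-2-1) edge (m-1-1)
(m-2-2) edge (m-1-2)
(m-1-1) edge (m-1-2)
(m-2-1) edge (m-2-2);
\end{tikzpicture}
\]
where $\FF^{-}(2,-)$ denotes the non--total partial maps. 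Since $\FF^{-}(2,-)=\FF(1,-)\vee\FF(1,-)$, the left vertical arrow is an $L^{2}$--indexed wedge of the original localising map, hence a trivial cofibration; the pushout then gives the result (and in fact the stronger conclusion that $L^{*}$ sends localising maps to \emph{trivial cofibrations}, not merely local weak equivalences). This elementary bookkeeping is both shorter and sharper than the Kan--extension route you sketched.
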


Let us postpone the proof of this proposition, and first explain how the theorem follows.

\begin{proof}[Proof of Theorem \ref{thm:SegalL} assuming Proposition \ref{prop:Lfunctor}]
There is a natural isomorphism
\[
L^* \FF(1,-) = \bigvee_{L} \FF(1, -) \;.
\]
Indeed, for an arbitrary finite set $A$, we have $\FF(1,A) = A_+$, so 
$$L^*\FF(1,-)(A) = \FF(1, A \times L) = (A \times L)_+ = \bigvee_L A_+ = \bigvee_L \FF(1,A) \;. $$
Now consider the diagram
\begin{equation*}
	\begin{tikzpicture}[descr/.style={fill=white}, baseline=(current bounding box.base)] ] 
	\matrix(m)[matrix of math nodes, row sep=2.5em, column sep=2.5em, 
	text height=1.5ex, text depth=0.25ex] 
	{
	\bigvee_L \FF(1,-)  & \FF(L,-) \\
	B \Sigma^L & \\
	}; 
	\path[->,font=\scriptsize] 
		(m-1-1) edge node [auto] {$\simeq$} (m-1-2);
	\path[->,font=\scriptsize] 
		(m-2-1) edge node [auto] {$$} (m-1-2);
	\path[->,font=\scriptsize] 
		(m-1-1) edge node [auto] {$$} (m-2-1);
	\end{tikzpicture} 
\end{equation*}
where the left-hand map is identified with $L^*\FF(1-,) \to L^*B\Sigma$ and so is a trivial cofibration by Proposition \ref{prop:Lfunctor} and Theorem \ref{thm:Segal}, while the horizontal map is a trivial cofibration (in the localised setting) by construction.
\end{proof}

\begin{proof}[Proof of Proposition \ref{prop:Lfunctor}]
First note that $L^*$ has a both a left and right adjoint. Furthermore, $L^*$ obviously preserves degreewise weak equivalences. 

Next, we check that $L^*$ preserves Reedy cofibrations. The generating Reedy cofibrations are of the form
\begin{equation}\label{eq:Lrcof}
\partial \FF(A,-) \times \Delta^n \; \cup \; \FF(A,-) \times \partial \Delta^n \to \FF(A,-) \times \Delta^n
\end{equation}
where $$\partial \FF(A,-) = \bigcup_{A \pto B} \FF(B,-)$$
is the union over (possibly partially defined) surjections. To check that $L^*$ maps (\ref{eq:Lrcof}) to a Reedy cofibration we can work in each fixed simplicial degree separately, so this comes down to checking that for any object $C$ in $\FF$ the group $Aut(C)$ acts freely on the complement of 
\[
\bigcup_{A \pto B} \FF(B, C \times L) \cup \bigcup_{D \hookrightarrow C} \FF(A, D \times L) \hookrightarrow \FF(A, C \times L)
\]
where the unions range over proper partial surjections $A \pto B$ and proper partial injections $D \hookrightarrow C$. 

An element $f :A \to C \times L$ is in this complement if and only if
\begin{itemize}
\item[(i)] $A \xrightarrow{f} C \times L \xrightarrow{\pi_1} C$ is surjective (for otherwise $f$ would factor through some proper injection $D \hookrightarrow C$)
\item[(ii)] $f$ is an injection (for otherwise $f$ would factor through a proper surjection $A \pto B$)
\end{itemize}
Clearly, condition $(i)$ alone ensures that a non-trivial element of $Aut(C)$ cannot fix such an $f$. This proves that $L^*$ is left Quillen for the Reedy model structure.

We still have to prove that $L^*$ sends the localising maps to trivial cofibrations. To simplify the exposition, we focus on the localising map $$\FF(1,-) \vee \FF(1,-) \to \FF(2,-) \; .$$ For an object $A$, the set $L^* \FF(2,-)(A) = \FF(2,A \times L)$ has three kinds of elements: the undefined map, the maps which are defined on just one of the two elements of $2$, and the total maps. So we have a pushout

\begin{equation*}
	\begin{tikzpicture}[descr/.style={fill=white}, baseline=(current bounding box.base)] ] 
	\matrix(m)[matrix of math nodes, row sep=2.5em, column sep=2.5em, 
	text height=1.5ex, text depth=0.25ex] 
	{
	\bigvee_{L^2} \FF(2,A)  & \FF(2,A \times L) \\
	\bigvee_{L^2} \FF^{-}(2,A) & \bigvee_L \FF(1,A) \vee \bigvee_L \FF(1,A)\\
	}; 
	\path[->,font=\scriptsize] 
		(m-1-1) edge node [auto] {$$} (m-1-2)
		(m-2-1) edge node [auto] {$$} (m-2-2)
		(m-2-1) edge node [auto] {$$} (m-1-1)
		(m-2-2) edge node [auto] {$$} (m-1-2);
	\end{tikzpicture} 
\end{equation*}
where $\FF^-(2,A)$ is the set of maps $2 \to A$ that are \emph{not} totally defined. The square is natural in $A$. The map on the left is identified with
\[
\bigvee_{L^2} \FF(2,-) \to \bigvee_{L^2} \FF(1,-) \vee \FF(1,-)
\]
evaluated at $A$, and is a trivial cofibration by definition. Therefore the map on the right, which is easily identified as the map
\[
L^*( \FF(1,-) \vee \FF(1,-)) \to L^* \FF(2,-)
\]
evaluated at $A$, is also a trivial cofibration. The general case, obtained by substituting $2$ with an arbitrary finite set, is treated similarly.
\end{proof}

\bibliography{iekepedro23jan17_arxiv}
\bibliographystyle{abbrv}

\end{document}